\theoremstyle{plain} 
\newtheorem{theorem}{Theorem}[section]
\newtheorem{corollary}[theorem]{Corollary}
\newtheorem{proposition}[theorem]{Proposition}
\newtheorem{lemma}[theorem]{Lemma}
\theoremstyle{definition} 
\newtheorem{definition}[theorem]{Definition}
\newtheorem{remark}[theorem]{Remark}
\newcommand{\R}{\mathbb{R}}
\numberwithin{equation}{section}
\renewcommand{\vec}[1]{\bm{#1}}
\title{A stochastic model of grain boundary dynamics: A Fokker-Planck perspective}
\author{Yekaterina Epshteyn}
\address[Yekaterina Epshteyn]%
{Department of Mathematics,
The University of Utah,
Salt Lake City, UT 84112, USA}
\email{epshteyn@math.utah.edu}
\author{Chun Liu}
\address[Chun Liu]%
{Department of Applied Mathematics, Illinois Institute of Technology.
Chicago, IL 60616, USA}
\email{cliu124@iit.edu}
\author{Masashi Mizuno}
\address[Masashi Mizuno]%
{Department of Mathematics, College of Science
and Technology, Nihon University, Tokyo 101-8308 JAPAN}
\email{mizuno.masashi@nihon-u.ac.jp}
\keywords{Grain growth, grain boundary network, texture development,
  lattice misorientation, triple junction drag, Fokker-Planck
  equation, fluctuation-dissipation theorem,  weighted
$L^2$ space,  long time asymptotics,
  sharp-interface grain growth simulations}
\subjclass[2000]{74N15, 35R37, 35Q84, 35K15, 93E03, 53C21, 49Q20, 65M22}
\begin{document}

%
%


%
%
%

\begin{abstract}
Many technologically useful materials are polycrystals composed of
 small monocrystalline grains that are separated by grain boundaries
 of crystallites with different lattice orientations. The energetics and connectivities
of the grain boundaries play an essential role in defining the effective
properties of materials across multiple scales.
In this paper we derive a Fokker-Planck model for the evolution
of the planar grain boundary network. The proposed model considers
anisotropic grain boundary energy which depends on lattice misorientation and
takes into account mobility of the triple junctions, as well as
independent dynamics of the misorientations. We establish long time
asymptotics of the Fokker-Planck solution, namely  
the joint probability density function of misorientations and triple
junctions, and closely related the marginal probability density of misorientations.
Moreover, for an equilibrium configuration of a boundary network, we derive explicit
local algebraic relations, a generalized Herring Condition formula, as well as
formula that connects grain boundary energy density with the geometry
of the grain boundaries that share a triple junction. Although the stochastic model neglects
the explicit interactions and correlations among triple junctions, the considered
specific form of the noise, under the fluctuation-dissipation assumption,
provides partial information about evolution of a grain boundary
network, and is consistent with presented results of extensive grain growth simulations. 

\end{abstract}

\maketitle

\section{Introduction}\label{sec1}
{Most technologically useful materials are polycrystalline
microstructures composed of a myriad of small monocrystalline grains
separated by grain boundaries. The energetics and connectivities of
grain boundaries play an important role in defining the main
properties of materials across multiple scales. }
More recent
mesoscale experiments and simulations provide large amounts of
information about both geometric features and crystallography of the
grain boundary network in material microstructures.  

A classical model, due to Mullins and
Herring~\cite{doi:10.1007/978-3-642-59938-5_2,
doi:10.1063/1.1722511,doi:10.1063/1.1722742}, for the evolution of grain
boundaries in polycrystalline materials is based on the motion by mean
curvature as the local evolution law. Mathematical analysis of the
motion by mean curvature can be found, for instance in~\cite{MR1100211,MR2024995,MR1100206,MR3155251}, and the
study of the curvature flow for networks can be found
in, e.g.~\cite{MR1833000,MR3967812,MR2075985,MR3495423,MR3612327,MR0485012}.  In
addition, to have a well-posed model of the evolution of the grain
boundary network, one has to impose a separate condition at the triple
junctions where three grain boundaries meet \cite{MR1240580,MR1833000}.

Grain growth is a very complex multiscale process. It involves, for example,
dynamics of grain boundaries, triple junctions (triple junctions are where three grain boundaries
meet) and the dynamics of lattice misorientations (difference in the orientation between two neighboring grains
that share the grain boundary)/possibility of
grains rotations. Recently, there are some studies that
consider interactions among grain boundaries and triple junctions, e.g.,
\cite{upmanyu1999triple,upmanyu2002molecular,barmak_grain_2013,ZHAO2017345,zhang2017equation,zhang2018motion}. In
our very recent work \cite{MR4263432,Katya-Chun-Mzn2}, we developed a new model for the evolution of the
2D grain-boundary network with finite mobility of the triple junctions
and with dynamic lattice misorientations (possibility of grain
rotations). In \cite{MR4263432,Katya-Chun-Mzn2},  using  the
energetic  variational  approach, we derived a system of geometric
differential equations to describe the motion of such grain
boundaries. Under assumption of no curvature effect,  we established a
local well-posedness result, as well as large time asymptotic behavior
for the model. Our results included obtaining explicit energy decay
rate for the system in terms of mobility of the triple junction and
the misorientation parameter (grains rotation relaxation time scale). In addition,  we conducted several numerical experiments for the 2D grain boundary network in order to further understand/illustrate the effect of relaxation time scales, e.g. of the curvature of grain
boundaries, mobility of triple junctions, and dynamics of misorientations on how the grain boundary system decays energy and coarsens with time \cite{Katya-Chun-Mzn2, Katya-Chun-Mzn4}. In \cite{Katya-Chun-Mzn4}, we also presented
and discussed relevant experimental results of grain growth in thin
films. Note that in the work \cite{MR4263432,Katya-Chun-Mzn2}, the
mathematical analysis of the model was done under assumption of no critical
events/no disappearance events, e.g., grain
disappearance, facet/grain boundary disappearance, facet interchange,
splitting of unstable junctions (however, numerical simulations were
performed with critical events).


The current work is motivated and is closely related to the work in
\cite{DK:gbphysrev,MR2772123,MR3729587} where a reduced 1D
coarsening model based
on the dynamical system was studied for texture evolution and was used
to identify texture evolution as a gradient flow, see also the article
\cite{BobKohn} for a perspective on the problem. In addition,  this
paper is a further
extension of our work in \cite{MR4263432,Katya-Chun-Mzn2,
  Katya-Chun-Mzn4}, and  the work
\cite{BERDICHEVSKY201250} is also relevant. In this paper, we study a stochastic model for the evolution of planar grain boundary
network in order to be able to incorporate and model the effect of the critical
events. 
We start with a simplified model and, hence,  consider the Langevin
equation analog of the model from \cite{MR4263432},  with the interactions among
triple junctions and misorientations modeled as white noise.  Next,  we use
the energetic variational approach to establish the associated
fluctuation-dissipation theorem.  The fluctuation-dissipation property ensures
that the free energy of the corresponding Fokker-Planck system is
dissipative. Moreover, the fluctuation-dissipation theorem also gives
the sufficient condition for the steady-state solution of the
Fokker-Planck equation to be given by the Boltzmann distribution.

Next, we study the well-posedness of the derived Fokker-Planck system
under assumption of the fluctuation-dissipation relation. In
particular,  we show that the solution of the Fokker-Planck equation
converges exponentially fast to the Boltzmann distribution for grain
boundary energy of the system.  Note that, the grain boundary energy
has degeneracy with respect to the misorientations (due to constraints on
misorientations)  and singularity with respect to the triple
junction. To overcome these difficulties, based on the idea of
\cite{MR1812873} (and see also a relevant work \cite{info:hdl/2115/71067}), we consider Fokker-Planck equation in a weighted
$L^2$ space, and we use the semigroup theory and the Poincar\'e
inequality to obtain  well-posedness and long-time asymptotics of the
solution.

Finally, for an equilibrium configuration of a boundary network, we derive explicit
local algebraic relations, a generalized Herring Condition formula, as well as
formula that connects grain boundary energy density with the geometry
of the grain boundaries that share a triple junction. The later local
algebraic relation gives the condition for a steady-state solution of
marginal probability density of misorientations to be the Boltzmann distribution
with respect to a grain boundary energy density. Such a steady-state
solution for marginal  probability density of misorientations is
related to the
observed Boltzmann distribution for the steady-state Grain Boundary Character
Distribution (GBCD) statistical metric of grain growth, e.g.  \cite{MR3729587, DK:gbphysrev, MR2772123}.
Although the investigated simplified stochastic model neglects
the explicit interactions and correlations among triple junctions, the considered
specific form of the noise, under the fluctuation-dissipation assumption,
provides partial information about evolution of a grain boundary
network, and is consistent with extensive grain growth simulations
presented in this paper. 

\par The paper is organized as follows. In
Section~\ref{sec3}, we discuss important details and
properties of the model for the grain boundary motion from \cite{MR4263432}. In
Sections~\ref{sec4}, we introduce the stochastic model of the grain
boundary system. In Section~\ref{sec5} we establish well-posedness
results of the associated Fokker-Planck equation and obtain the
long-time asymptotic behavior of the solution. In Section~\ref{sec10}
we derive Fokker-Planck type equation for the marginal probability
density of the misorientations and study long-time asymptotics of its solution.
Finally, in Section~\ref{sec13}, we present extensive numerical
experiments to show consistency among the obtained results for the
simplified stochastic model of a grain boundary network and the
results of 2D grain growth simulations based on sharp-interface
approach \cite{Katya-Chun-Mzn2} (and an earlier work \cite{MR2772123,
  MR3729587}), including numerical
investigation of the derived local algebraic relations for an equilibrium configuration of a boundary network.
\section{The Fokker-Planck equation and the fluctuation-dissipation principle}\label{sec2}

In this section, we first derive a Langevin equation, a stochastic
differential equation for the dynamics of misorientations and the triple
junctions using the deterministic model of grain boundary motion
obtained in
\cite{MR4263432} and see also \cite{Katya-Chun-Mzn2}. After that, we establish the fluctuation-dissipation theorem from
associated Fokker-Planck equation. Note, we use below notation $|\cdot|$ for a
standard Euclidean vector norm.

\subsection{Review of the deterministic model and the gradient flow structure}\label{sec3}
First, we review here the deterministic grain boundary motion model from
\cite{MR4263432}. Assume a single triple junction and consider
the following grain boundary energy of the system,
\begin{equation}
 \label{eq:2.1}
  \tilde{E}
  =
  \sum_{j=1}^3
  \sigma(\Delta^{(j)}\alpha)|\Gamma_t^{(j)}|,
\end{equation}
where $\sigma:\mathbb{R}\rightarrow\mathbb{R}$ is a given surface
tension of a grain boundary, $\alpha^{(j)}=\alpha^{(j)}(t):[0,\infty)\rightarrow\mathbb{R}$
is a time-dependent orientation of the grains,
$\theta=\Delta^{(j)}\alpha:=\alpha^{(j-1)}-\alpha^{(j)}$ is a lattice
misorientation of the grain boundary $\Gamma^{(j)}_t$, and
$|\Gamma_t^{(j)}|$ is the length of $\Gamma_t^{(j)}$ for $j=1,2,3$. Here
we put $\alpha^{(0)}=\alpha^{(3)}$. In this work, we assume a grain
boundary energy density $\sigma(\Delta^{(j)}\alpha)$ is only a
function of misorientation $\Delta^{(j)}\alpha$. In addition,  we  assume that $\sigma$ is a $C^1$
function on $\R$.

\begin{figure}[h]
 \centering
 \includegraphics[width=8cm]{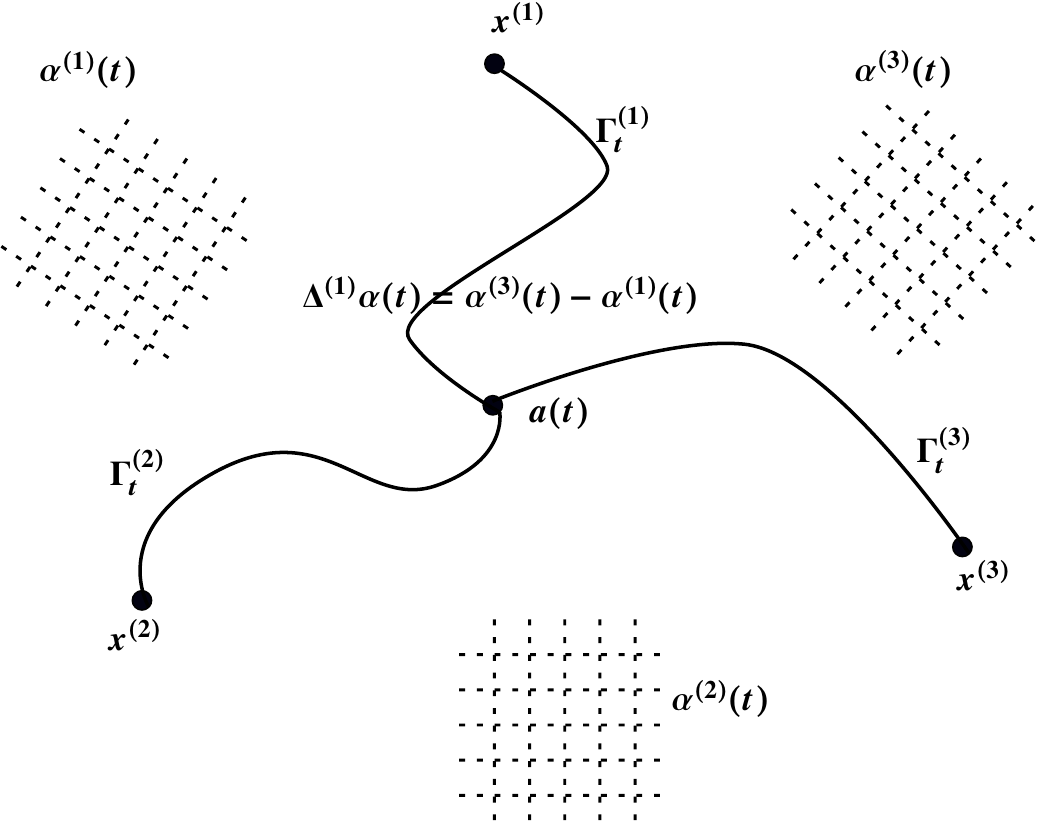}

 \caption{Grain boundaries/curves $\Gamma_t^{(j)}$ that meet at a
 triple junction $\vec{a}(t)$.  Lattice orientations are angles
 (scalars) $\alpha^{(j)}(t)$. Misorientation $\Delta^{(j)}\alpha(t)$
 of $\Gamma^{(j)}_t$ is
 the difference between two lattice orientations of grains
 that share grain boundary $\Gamma^{(j)}_t$.}\label{figTJ2}
\end{figure}

Then, as a result of applying the maximal dissipation principle for
the energy (\ref{eq:2.1}),  the
following model was derived in \cite{MR4263432}, 
\begin{equation}
 \label{eq:2.2}
  \left\{
  \begin{aligned}
   v_n^{(j)}
   &=
   \mu
   \sigma(\Delta^{(j)}\alpha)
   \kappa^{(j)},\quad\text{on}\ \Gamma_t^{(j)},\ t>0,\quad j=1,2,3, \\
   \frac{d\alpha^{(j)}}{dt}
   &=
   -\gamma
   \Bigl(
   \sigma_\theta(\Delta^{(j+1)}\alpha)
   |\Gamma_t^{(j+1)}|
   -
   \sigma_\theta(\Delta^{(j)}\alpha)
   |\Gamma_t^{(j)}|
   \Bigr)
   ,\quad
   j=1,2,3,
   \\
   \frac{d\vec{a}}{dt}(t)
   &=
   -\eta
   \sum_{k=1}^3
   \sigma(\Delta^{(k)}\alpha)
   \frac{\vec{b}^{(k)}}{|\vec{b}^{(k)}|},
   \quad t>0,\ \text{at}\ \vec{a}, \\
   \Gamma_t^{(j)}
   &:
   \vec{\xi}^{(j)}(s,t),\quad
   0\leq s\leq 1,\quad
   t>0,\quad
   j=1,2,3, \\
   \vec{a}(t)
   &=
   \vec{\xi}^{(1)}(1,t)
   =
   \vec{\xi}^{(2)}(1,t)
   =
   \vec{\xi}^{(3)}(1,t),
   \quad
   \text{and}
   \quad
   \vec{\xi}^{(j)}(0,t)=\vec{x}^{(j)},\quad
   j=1,2,3,
  \end{aligned}
  \right.
\end{equation}
where $\Delta^{(4)}\alpha=\Delta^{(1)}\alpha$. In \eqref{eq:2.2},
$v_n^{(j)}$, $\kappa^{(j)}$, and $\vec{b}^{(j)}=\vec{\xi}_s^{(j)}$
denote a normal velocity, a curvature and a tangent vector of the grain
boundary $\Gamma_t^{(j)}$, respectively. Note that $s$ is \emph{not} an
arc length parameter of $\Gamma_t^{(j)}$, namely, $\vec{b}^{(j)}$ is
\emph{not} necessarily a unit tangent vector. The vector
$\vec{a}=\vec{a}(t):[0,\infty)\rightarrow\mathbb{R}^2$ denotes a
position of the triple junction, $\vec{x}^{(j)}$, in a current
context (see also numerical Section \ref{sec13}),  is a position of the
end point of the grain boundary. The three independent relaxation time
scales $\mu,\gamma,\eta>0$ (length, misorientation and position of the
triple junction) are considered in this work as positive
constants.

Recall that  in \cite{MR4263432}, to derive 
\eqref{eq:2.2},   we first computed energy dissipation rate,
\begin{equation}
 \label{eq:2.21}
 \begin{split}
  \frac{d\tilde{E}}{dt}
  &=
  \frac{d}{dt}
  \sum_{j=1}^3
 \int_0^1\sigma(\Delta^{(j)}\alpha(t))|\vec{b}^{(j)}(s,t)|\,ds \\
  &=
  \sum_{j=1}^3
  \int_0^1\sigma(\Delta^{(j)}\alpha(t))\frac{\vec{b}^{(j)}(s,t)}{|\vec{b}^{(j)}(s,t)|}\cdot \vec{\xi}_{ts}^{(j)}(s,t)\,ds \\
  &\quad +
  \sum_{j=1}^3\int_0^1\sigma_\theta(\Delta^{(j)}\alpha(t))(\Delta^{(j)}\alpha(t))_t|\vec{b}^{(j)}(s,t)|\,ds.
 \end{split}
\end{equation}
Using $\vec{\xi}^{(j)}_{ts}=(\vec{\xi}^{(j)}_{t})_s$, integration by
parts, $\vec{\xi}^{(j)}(0,t)=\vec{x}^{(j)}$ is independent of $t$,
and $\vec{\xi}^{(j)}(1,t)=\vec{a}(t)$, we have,
\begin{equation}
 \label{eq:2.22}
 \begin{split}
 &\quad
 \sum_{j=1}^3
  \int_0^1\sigma(\Delta^{(j)}\alpha(t))\frac{\vec{b}^{(j)}(s,t)}{|\vec{b}^{(j)}(s,t)|}\cdot \vec{\xi}_{ts}^{(j)}(s,t)\,ds \\
  &=
  -
  \sum_{j=1}^3
  \sigma(\Delta^{(j)}\alpha(t))
  \int_0^1
  \left(
  \frac{\vec{b}^{(j)}(s,t)}{|\vec{b}^{(j)}(s,t)|}
  \right)_s
  \cdot
  \vec{\xi}_{t}^{(j)}(s,t)\,ds 
  +
  \sum_{j=1}^3
  \sigma(\Delta^{(j)}\alpha(t))
  \frac{\vec{b}^{(j)}(1,t)}{|\vec{b}^{(j)}(1,t)|}
  \cdot \vec{a}_{t}(t).
 \end{split}
\end{equation}
Recall that $\Delta^{(j)}\alpha=\alpha^{(j-1)}-\alpha^{(j)}$, we replace index of sum for the misorientations and we obtain,
\begin{equation}
 \label{eq:2.23}
 \begin{split}
  &\quad
  \sum_{j=1}^3\int_0^1\sigma_\theta(\Delta^{(j)}\alpha(t))(\Delta^{(j)}\alpha(t))_t|\vec{b}^{(j)}(s,t)|\,ds \\
  &=
  \sum_{j=1}^3\int_0^1
  \left(
  \sigma_\theta(\Delta^{(j+1)}\alpha(t))|\Gamma_t^{(j+1)}|
  -
  \sigma_\theta(\Delta^{(j)}\alpha(t))|\Gamma_t^{(j)}|
  \right)
  \alpha_t^{(j)}(t).
 \end{split}
\end{equation}
Inserting \eqref{eq:2.22} and \eqref{eq:2.23} into \eqref{eq:2.21}, we have,
\begin{equation}
 \begin{split}
  \frac{d\tilde{E}}{dt}
    &=
    - 
    \sum_{j=1}^3
    \sigma(\Delta^{(j)}\alpha(t))
    \int_0^1
    \left(
    \frac{\vec{b}^{(j)}(s,t)}{|\vec{b}^{(j)}(s,t)|}
    \right)_s
    \cdot
    \vec{\xi}_{t}^{(j)}(s,t)\,ds 
    +
    \sum_{j=1}^3
    \sigma(\Delta^{(j)}\alpha(t))
    \frac{\vec{b}^{(j)}(1,t)}{|\vec{b}^{(j)}(1,t)|}
    \cdot \vec{a}_{t}(t) \\
    &\quad
    +\sum_{j=1}^3\int_0^1
    \left(
    \sigma_\theta(\Delta^{(j+1)}\alpha(t))|\Gamma_t^{(j+1)}|
    -
    \sigma_\theta(\Delta^{(j)}\alpha(t))|\Gamma_t^{(j)}|
    \right)
    \alpha_t^{(j)}(t).
  \end{split}
\end{equation}
After that,  we ensured that the entire system is dissipative, that is
$\frac{d\tilde{E}}{dt}\leq0$.  We obtained \eqref{eq:2.2} with a
help of the Frenet-Serret formulas and with a help of the energy
dissipation principle which took a form as presented below,
\begin{equation*}
 \frac{d\tilde{E}}{dt}
  =
  -
  \sum_{j=1}^3
  \left(
  \frac{1}{\mu}
  \int_{\Gamma_t^{(j)}}
  |v_n^{(j)}|^2 d\mathcal{H}^1
  +
  \frac{1}{\gamma}
  \left|
   \frac{d\alpha^{(j)}}{dt}(t)
   \right|^2
  \right)
  -
  \frac{1}{\eta}
  \left|
   \frac{d\vec{a}}{dt}
   \right|^2,
\end{equation*}
where $\mathcal{H}^1$ is the $1$-dimensional Hausdorff measure.
More in-depth discussion and complete details of the derivation of the model \eqref{eq:2.2} can be found in our
earlier work \cite[Section 2]{MR4263432}.

Next, in \cite{MR4263432},  we relaxed curvature effect, by taking the limit
$\mu\rightarrow\infty$, and we obtained the reduced model,
\begin{equation}
 \label{eq:2.3}
 \left\{
  \begin{aligned}
   \frac{d\alpha^{(j)}}{dt}
   &=
   -
   \gamma{
   \Bigl(
   \sigma_\theta(\Delta^{(j+1)}\alpha)|\vec{a}(t)-\vec{x}^{(j+1)}|
   -
   \sigma_\theta(\Delta^{(j)}\alpha)|\vec{a}(t)-\vec{x}^{(j)}|
   \Bigr)
   }
   ,
   \quad
   j=1,2,3, \\
   \frac{d\vec{a}}{dt}(t)
   &=
   -
   \eta
   \sum_{j=1}^3
   \sigma(\Delta^{(j)}\alpha)
   \frac{\vec{a}(t)-\vec{x}^{(j)}}{|\vec{a}(t)-\vec{x}^{(j)}|},
   \quad t>0,
  \end{aligned}
 \right.
\end{equation}
where $\vec{x}^{(4)}=\vec{x}^{(1)}$.  Note, in
\cite{MR4263432},  we first applied the maximal
dissipation principle for the grain boundary energy of the system with curvature
\eqref{eq:2.1},  and after that we took the relaxation limit
$\mu\rightarrow\infty$. In fact, as the following proposition shows, these
operations are interchangeable.
\begin{proposition}
 \label{prop:2.1}
 Let $E$ be a relaxation energy associated with  $\tilde{E}$ ($\mu\rightarrow\infty$), given by
 \begin{equation*}
  E
   (\Delta\vec{\alpha},\vec{a})
   =
   \sum_{j=1}^3
   \sigma(\Delta^{(j)}\alpha)|\vec{a}-\vec{x}^{(j)}|.
 \end{equation*}
 Then, equation \eqref{eq:2.3} is a gradient flow associated with the
 energy $E$, namely, we have,
 \begin{equation}
  \label{eq:2.4}
  \begin{split}
   \frac{\delta E}{\delta\alpha^{(j)}}
   &=
   \sigma_\theta(\Delta^{(j+1)}\alpha)|\vec{a}-\vec{x}^{(j+1)}|
   -
   \sigma_\theta(\Delta^{(j)}\alpha)|\vec{a}-\vec{x}^{(j)}|,
   \\
   \frac{\delta E}{\delta\vec{a}}
   &=
   \sum_{j=1}^3
   \sigma(\Delta^{(j)}\alpha)
   \frac{\vec{a}-\vec{x}^{(j)}}{|\vec{a}-\vec{x}^{(j)}|}.
  \end{split}
\end{equation}
\end{proposition}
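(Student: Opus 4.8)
The plan is to verify the two variational-derivative formulas in \eqref{eq:2.4} by a direct differentiation of $E(\Delta\vec{\alpha},\vec{a})=\sum_{j=1}^3\sigma(\Delta^{(j)}\alpha)|\vec{a}-\vec{x}^{(j)}|$, and then simply read off that \eqref{eq:2.3} is the associated gradient flow with relaxation rates $\gamma$ and $\eta$. I would begin with $\delta E/\delta\alpha^{(j)}$. The only point requiring care is the cyclic dependence of the misorientations on the orientations: since $\Delta^{(k)}\alpha=\alpha^{(k-1)}-\alpha^{(k)}$, a given orientation $\alpha^{(j)}$ enters $E$ through exactly two summands, namely through $\sigma(\Delta^{(j)}\alpha)$ with $\partial\Delta^{(j)}\alpha/\partial\alpha^{(j)}=-1$ and through $\sigma(\Delta^{(j+1)}\alpha)$ with $\partial\Delta^{(j+1)}\alpha/\partial\alpha^{(j)}=+1$ (keeping the convention $\Delta^{(4)}\alpha=\Delta^{(1)}\alpha$, $\vec{x}^{(4)}=\vec{x}^{(1)}$). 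The chain rule then yields $\delta E/\delta\alpha^{(j)}=\sigma_\theta(\Delta^{(j+1)}\alpha)|\vec{a}-\vec{x}^{(j+1)}|-\sigma_\theta(\Delta^{(j)}\alpha)|\vec{a}-\vec{x}^{(j)}|$, which is the first line of \eqref{eq:2.4}.

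Next I would compute $\delta E/\delta\vec{a}$. Here each density $\sigma(\Delta^{(j)}\alpha)$ is independent of $\vec{a}$, so only the factors $|\vec{a}-\vec{x}^{(j)}|$ are differentiated; using $\nabla_{\vec{a}}|\vec{a}-\vec{x}^{(j)}|=(\vec{a}-\vec{x}^{(j)})/|\vec{a}-\vec{x}^{(j)}|$, valid as long as $\vec{a}\neq\vec{x}^{(j)}$ (the triple junction stays off the fixed endpoints --- the singular set mentioned in the introduction), gives the second line of \eqref{eq:2.4}. Inserting the two identities into the abstract gradient flow $d\alpha^{(j)}/dt=-\gamma\,\delta E/\delta\alpha^{(j)}$, $d\vec{a}/dt=-\eta\,\delta E/\delta\vec{a}$ reproduces \eqref{eq:2.3} verbatim.

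As a consistency check --- and to see that passing to the limit $\mu\to\infty$ and applying the maximal dissipation principle commute --- I would note that $E$ is precisely the $\mu\to\infty$ limit of $\tilde E$ obtained by replacing each $|\Gamma_t^{(j)}|$ with the length $|\vec{a}-\vec{x}^{(j)}|$ of the straight segment from $\vec{x}^{(j)}$ to $\vec{a}$, which is the configuration selected in the $v_n^{(j)}=\mu\sigma(\Delta^{(j)}\alpha)\kappa^{(j)}$, $\mu\to\infty$ regime (vanishing curvature, hence straight boundaries); the dissipation identity recorded before \eqref{eq:2.3} then degenerates to $\tfrac{dE}{dt}=-\sum_{j=1}^3\tfrac1\gamma|\tfrac{d\alpha^{(j)}}{dt}|^2-\tfrac1\eta|\tfrac{d\vec{a}}{dt}|^2$, which is exactly the gradient-flow structure just derived. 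I do not anticipate any genuine obstacle: the argument is a short chain-rule computation, and the only delicate points are the bookkeeping of the cyclic indices in the first step and the mild nondegeneracy hypothesis $\vec{a}\neq\vec{x}^{(j)}$ ensuring smoothness of $\vec{a}\mapsto|\vec{a}-\vec{x}^{(j)}|$.
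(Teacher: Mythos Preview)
Your proposal is correct and follows essentially the same approach as the paper: the paper's proof simply states that the first identity follows by differentiating $E$ with respect to $\alpha^{(j)}$ and obtains the second by computing the directional derivative $\frac{d}{d\varepsilon}\big|_{\varepsilon=0}E(\Delta\vec{\alpha},\vec{a}+\varepsilon\vec{p})$. Your write-up is in fact more detailed than the paper's (spelling out the chain-rule bookkeeping for the cyclic indices), and your additional consistency check about the $\mu\to\infty$ limit is a nice remark but not needed for the proposition itself.
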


\begin{proof}
 The first equation of \eqref{eq:2.4} can be obtained by taking the
 derivative of $E$ with respect to $\alpha^{(j)}$. The second equation of \eqref{eq:2.4} can
 be deduced from,
 \begin{equation*}
  \frac{d}{d\varepsilon}\bigg|_{\varepsilon=0}
   E(\Delta\vec{\alpha},\vec{a}+\varepsilon\vec{p})
   =
   \sum_{j=1}^3
   \sigma(\Delta^{(j)}\alpha)
   \frac{\vec{a}-\vec{x}^{(j)}}{|\vec{a}-\vec{x}^{(j)}|}
   \cdot
   \vec{p},
 \end{equation*}
 for $\vec{p}\in\R^2$.
\end{proof}

To analyze the grain boundary motion in this work, Sections \ref{sec4}-\ref{sec10}, it will be convenient
to use the misorientation
$\Delta^{(j)}\alpha$ as a state variable, instead of the orientation
$\alpha^{(j)}$. Thus, from the first equation of \eqref{eq:2.3}, we
can derive,
\begin{equation}
 \label{eq:2.5}
  \begin{split}
   &\quad\frac{d(\Delta^{(j)}\alpha)}{dt} \\
   &=
   -
   \gamma{
   \Bigl(
   2
   \sigma_\theta(\Delta^{(j)}\alpha)|\vec{a}(t)-\vec{x}^{(j)}|
   -
   \sigma_\theta(\Delta^{(j+1)}\alpha)|\vec{a}(t)-\vec{x}^{(j+1)}|
   -
   \sigma_\theta(\Delta^{(j-1)}\alpha)|\vec{a}(t)-\vec{x}^{(j-1)}|
   \Bigr)
   },
  \end{split}
\end{equation}
where $\Delta^{(0)}\alpha=\Delta^{(3)}\alpha$,
$\Delta^{(1)}\alpha=\Delta^{(4)}\alpha$,
$\Delta^{(2)}\alpha=\Delta^{(5)}\alpha$, and $\vec{x}^{(j)}$ is defined
similarly. From the definition of the misorientation,
$\Delta^{(j)}\alpha=\alpha^{(j-1)}-\alpha^{(j)}$, it is easy to find the
constraint,
\begin{equation}
\label{eq:2.6}
\Delta^{(1)}\alpha
  +
  \Delta^{(2)}\alpha
  +
  \Delta^{(3)}\alpha
  =
  0.
\end{equation}
To consider \eqref{eq:2.5} to be a gradient flow, we introduce the
2-dimensional plane,
\begin{equation}
 \Omega
  :=
  \left\{
   \left(
    \Delta^{(1)}\alpha,\Delta^{(2)}\alpha,\Delta^{(3)}\alpha
   \right)
   \in \left(
	-\frac{\pi}{4},\frac\pi4
       \right)^3
   :
   \Delta^{(1)}\alpha+\Delta^{(2)}\alpha+\Delta^{(3)}\alpha=0
  \right\}
  \subset\R^3.
\end{equation}
For planar grain boundary network, it is reasonable to consider such range of the misorientations.
Next, we show that \eqref{eq:2.4} is also a gradient flow of the
energy $E$ with respect to
the misorientation $\Delta^{(j)}\alpha$ and the triple junction $\vec{a}$.

\begin{proposition}
 \label{prop:2.2}
 The system of equations \eqref{eq:2.3} is a gradient flow of the
 energy $E$ with respect to the misorientation $\Delta^{(j)}\alpha$ and
 the triple junction $\vec{a}$.
\end{proposition}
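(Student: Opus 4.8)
The plan is to derive the statement from Proposition~\ref{prop:2.1} by pushing the $\vec\alpha$-gradient flow forward through the linear change of variables $\vec\alpha\mapsto\Delta\vec\alpha$, being careful about what ``gradient flow with respect to $\Delta^{(j)}\alpha$'' must mean in the presence of the constraint \eqref{eq:2.6}. First I would record the relevant linear algebra: set $\vec\theta=(\Delta^{(1)}\alpha,\Delta^{(2)}\alpha,\Delta^{(3)}\alpha)$ and let $M\in\R^{3\times 3}$ be the matrix of the map $\vec\alpha\mapsto\vec\theta$ determined cyclically by $\theta^{(j)}=\alpha^{(j-1)}-\alpha^{(j)}$. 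Since $E$ depends on $\vec\alpha$ only through $\vec\theta=M\vec\alpha$, the chain rule gives $\nabla_{\vec\alpha}E=M^{\mathsf T}\nabla_{\vec\theta}E$, which is exactly the first line of \eqref{eq:2.4} once one writes $\tfrac{\partial E}{\partial\theta^{(j)}}=\sigma_\theta(\Delta^{(j)}\alpha)\,|\vec a-\vec x^{(j)}|$.

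Applying $M$ to the $\vec\alpha$-gradient flow $\dot{\vec\alpha}=-\gamma\,\nabla_{\vec\alpha}E$ from Proposition~\ref{prop:2.1} then yields
\begin{equation*}
 \dot{\vec\theta}=M\dot{\vec\alpha}=-\gamma\,MM^{\mathsf T}\,\nabla_{\vec\theta}E,
 \qquad
 MM^{\mathsf T}=3I-J,
\end{equation*}
with $J$ the $3\times 3$ all-ones matrix (a direct computation); substituting the expression for $\tfrac{\partial E}{\partial\theta^{(j)}}$ here reproduces \eqref{eq:2.5}, and since $J\dot{\vec\theta}=0$ the flow preserves the constraint $\Delta^{(1)}\alpha+\Delta^{(2)}\alpha+\Delta^{(3)}\alpha=0$, consistent with \eqref{eq:2.6} and with $\Omega$. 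Next I would identify $MM^{\mathsf T}$ with the gradient operator on $\Omega$: the plane $\{\theta^{(1)}+\theta^{(2)}+\theta^{(3)}=0\}$ containing $\Omega$ inherits the Euclidean metric of $\R^3$, and the orthogonal projection onto it is $P=I-\tfrac13 J$, so $MM^{\mathsf T}=3P$ on that plane. Hence $\dot{\vec\theta}=-3\gamma\,P\nabla_{\vec\theta}E=-3\gamma\,\nabla_\Omega E$, i.e. \eqref{eq:2.5} is the gradient flow of $E|_\Omega$ with relaxation constant $3\gamma$; equivalently, for every admissible variation $\vec\phi$ with $\phi^{(1)}+\phi^{(2)}+\phi^{(3)}=0$ one has $\langle\dot{\vec\theta},\vec\phi\rangle=-3\gamma\sum_j\tfrac{\partial E}{\partial\theta^{(j)}}\phi^{(j)}$, the $J$-term dropping out against $\vec\phi$. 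The $\vec a$-equation in \eqref{eq:2.3} is untouched by the change of state variable and is already the gradient flow $\dot{\vec a}=-\eta\,\tfrac{\delta E}{\delta\vec a}$ from the second line of \eqref{eq:2.4}, so combining the two, \eqref{eq:2.3} is a gradient flow of $E$ in the variables $(\Delta\vec\alpha,\vec a)\in\Omega\times\R^2$.

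The only genuinely delicate point is the interpretation of the variational derivative with respect to the misorientations: because the three components $\Delta^{(j)}\alpha$ are linearly dependent, one must use the Riemannian gradient on the constraint manifold $\Omega$ rather than the naive componentwise derivative, and it is precisely this projection that converts the relaxation time $\gamma$ into $3\gamma$ and accounts for the coefficient $2$ together with the cyclic $-1$'s appearing in \eqref{eq:2.5}. Beyond this, the argument is just the chain rule together with the identity $MM^{\mathsf T}=3I-J$.
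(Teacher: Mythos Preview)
Your proof is correct and essentially parallel to the paper's: both identify the tangential gradient on $\Omega$ via the projection $P=I-\tfrac13 J$ and verify that \eqref{eq:2.5} is $-3\gamma\, P\nabla_{\Delta\vec\alpha}E$. The only difference is packaging---you obtain $3P$ as $MM^{\mathsf T}$ by pushing forward the $\vec\alpha$-gradient flow of Proposition~\ref{prop:2.1}, whereas the paper simply writes down $P$ as the orthogonal projection onto the plane with normal $(1,1,1)^{\mathsf T}$ and computes $\nabla^\Omega_{\Delta\vec\alpha}E$ directly.
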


\begin{proof}
 We need to show that the right hand side of \eqref{eq:2.5} is a
gradient of $E$ with respect to misorientation $\Delta\vec{\alpha}$ on
$\Omega$.  Using the fact that one of the normal vectors of $\Omega$ is
$\mathstrut^t(1,1,1)$, the tangential derivative for an arbitrary
function $\phi=\phi\left(
\Delta^{(1)}\alpha,\Delta^{(2)}\alpha,\Delta^{(3)}\alpha \right)$ on
$\Omega$ is given by,
 \begin{equation}
  \nabla^\Omega_{\Delta\vec{\alpha}}
   \phi
   =
   \left(
    \begin{pmatrix}
     1 & 0 & 0 \\
     0 & 1 & 0 \\
     0 & 0 & 1
    \end{pmatrix}
    -
    \frac13
    \begin{pmatrix}
     1 & 1 & 1 \\
     1 & 1 & 1 \\
     1 & 1 & 1
    \end{pmatrix}
   \right)
   \begin{pmatrix}
    \phi_{\Delta^{(1)}\alpha} \\
    \phi_{\Delta^{(2)}\alpha} \\
    \phi_{\Delta^{(3)}\alpha} \\
   \end{pmatrix}
   =
   \frac13
   \begin{pmatrix}
    2 & -1 & -1 \\
    -1 & 2 & -1 \\
    -1 & -1 & 2
   \end{pmatrix}
   \begin{pmatrix}
    \phi_{\Delta^{(1)}\alpha} \\
    \phi_{\Delta^{(2)}\alpha} \\
    \phi_{\Delta^{(3)}\alpha} \\
   \end{pmatrix}.
 \end{equation}
 Thus, we have that,
 \begin{equation}
  \nabla^\Omega_{\Delta\vec{\alpha}} E
   =
   \frac13
   \begin{pmatrix}
    +2\sigma_\theta(\Delta^{(1)}\alpha)|\vec{a}-\vec{x}^{(1)}|
    -\sigma_\theta(\Delta^{(2)}\alpha)|\vec{a}-\vec{x}^{(2)}|
    -\sigma_\theta(\Delta^{(3)}\alpha)|\vec{a}-\vec{x}^{(3)}| \\
    -\sigma_\theta(\Delta^{(1)}\alpha)|\vec{a}-\vec{x}^{(1)}|
    +2\sigma_\theta(\Delta^{(2)}\alpha)|\vec{a}-\vec{x}^{(2)}|
    -\sigma_\theta(\Delta^{(3)}\alpha)|\vec{a}-\vec{x}^{(3)}| \\
    -\sigma_\theta(\Delta^{(1)}\alpha)|\vec{a}-\vec{x}^{(1)}|
    -\sigma_\theta(\Delta^{(2)}\alpha)|\vec{a}-\vec{x}^{(2)}|
    +2\sigma_\theta(\Delta^{(3)}\alpha)|\vec{a}-\vec{x}^{(3)}| \\
   \end{pmatrix}.
 \end{equation}
 Hence,  the equation \eqref{eq:2.5} can be regarded as a gradient
 flow of the
 energy $E$, that is,
 \begin{equation}
  \frac{d}{dt}
   \begin{pmatrix}
   \Delta^{(1)}\alpha \\
   \Delta^{(2)}\alpha \\
   \Delta^{(3)}\alpha
   \end{pmatrix}
   =
   -3\gamma\nabla^\Omega_{\Delta\vec{\alpha}} E.
 \end{equation}
\end{proof}

From Propositions \ref{prop:2.1} and \ref{prop:2.2}, we have that,
\begin{equation*}
 \frac{d(\Delta\vec{\alpha})}{dt}
  =
  -3\gamma\nabla^\Omega_{\Delta\vec{\alpha}} E,\qquad
 \frac{d\vec{a}}{dt}
  =
  -\eta\nabla_{\vec{a}} E,
\end{equation*}
where $\Delta\vec{\alpha}=(\Delta^{(1)}\alpha, \Delta^{(2)}\alpha,
\Delta^{(3)}\alpha)$. Thus,  we obtain the following energy dissipation,
\begin{equation*}
 \frac{dE}{dt}
  =
  -
  \frac{1}{3\gamma}
  \left|
 \frac{d(\Delta\vec{\alpha})}{dt}
  \right|^2
  - 
  \frac{1}{\eta}
  \left|
   \frac{d\vec{a}}{dt}
  \right|^2.
\end{equation*}

\subsection{Stochastic model and the fluctuation-dissipation theorem}\label{sec4}
Many technologically useful materials are polycrystals composed of a
myriad of small monocrystalline grains separated by grain
boundaries. An interaction among the grain boundaries and the triple
junctions in a grain boundary network (including modeling of critical/disappearance events, e.g., grain
disappearance, facet/grain boundary disappearance, facet interchange,
splitting of unstable junctions) is a very complex process. Here, we propose a simplified
stochastic model to develop better understanding of dynamics of misorientations and triple
junctions in a network. In our model, we consider ensemble of triple
junctions and misorientations (without curvature effect), and we use white noise
to describe interactions among them. Therefore, we consider the following Langevin equations, or stochastic
differential equations,
\begin{equation}
 \label{eq:2.9}
 \begin{aligned}
  d(\Delta\vec{\alpha})
  &=
  \vec{v}_{\Delta\vec{\alpha}}\,dt
  +\beta_{\Delta\vec{\alpha}}\,dB,&\qquad
  \vec{v}_{\Delta\vec{\alpha}}
  &=
  -3\gamma\nabla^\Omega_{\Delta\vec{\alpha}} E,
  \\
  d\vec{a}
  &=
  \vec{v}_{\vec{a}}\,dt
  +\beta_{\vec{a}}\,dB,
  &\qquad
  \vec{v}_{\vec{a}}
  &=
  -\eta\frac{\delta E}{\delta \vec{a}}
  =
  -\eta\nabla_{\vec{a}}E.
 \end{aligned}
\end{equation}
Here $B$ denotes a Brownian motion, and $\beta_{\Delta\vec{\alpha}},
\beta_{\vec{a}}>0$ are fluctuation parameters for misorientation
$\Delta\vec{\alpha}$ and triple junction $\vec{a}$,
respectively.  The proposed model \eqref{eq:2.9} can be viewed as a
stochastic analog of the ``vertex model''  \eqref{eq:2.3}. Thus,  the associated probability density function or joint
distribution function of misorientations $\Delta\vec{\alpha}$ and
positions of the triple junctions $\vec{a}$, $f=f(\Delta\vec{\alpha},\vec{a},t)$ obeys the
following Fokker-Planck equation,
\begin{equation}
 \label{eq:2.7}
 \frac{\partial f}{\partial t}
  +
  \nabla^\Omega_{\Delta\vec{\alpha}}
  \cdot
  \left(
   \vec{v}_{\Delta\vec{\alpha}}f
  \right)
  +
  \nabla_{\vec{a}}
  \cdot
  \left(
  \vec{v}_{\vec{a}}f
  \right)
  =
  \frac{\beta^2_{\Delta\vec{\alpha}}}{2}
  \Delta_{\Delta\vec{\alpha}}^{\Omega} f
  +
  \frac{\beta^2_{\vec{a}}}{2}
  \Delta_{\vec{a}} f,
\end{equation}
here
$\Delta_{\Delta\vec{\alpha}}^{\Omega}=\nabla^\Omega_{\Delta\vec{\alpha}}\cdot\nabla^\Omega_{\Delta\vec{\alpha}}$,
and $\Delta_{\vec{a}}$ is the standard Laplacian on
$\Omega_{\mathrm{TJ}}$. Hereafter a bounded domain
$\Omega_{\mathrm{TJ}}\subset\R^2$ denotes the state space for the triple
junction $\vec{a}$. In addition, we impose the natural boundary conditions,
\begin{equation}
 \label{eq:2.12}
 f
  \nabla^\Omega_{\Delta\vec{\alpha}}
  \left(
   \log f
   +
   \frac{6\gamma}{\beta^2_{\Delta\vec{\alpha}}}
   E
  \right)
  \cdot
  \vec{\nu_{\Delta\vec{\alpha}}}
  \bigg|_{\partial\Omega\times\Omega_{\mathrm{TJ}}}
  =0, \quad 
  f
  \nabla_{\vec{a}}
  \left(
  \log f
  +
  \frac{2\eta}{\beta^2_{\vec{a}}}
  E
  \right)
  \cdot
  \vec{\nu_{\vec{a}}}
  \bigg|_{\Omega\times\partial\Omega_{\mathrm{TJ}}}
  =
  0,
\end{equation}
where $\vec{\nu}_{\Delta\vec{\alpha}}$ and $\vec{\nu}_{\vec{a}}$ are an outer unit normal vector to
$\partial\Omega$ and $\partial\Omega_{\mathrm{TJ}}$, respectively. Next,  we state a condition
for the fluctuation parameters  $\beta_{\Delta\vec{\alpha}}$ and 
$\beta_{\vec{a}}$ under which the system described by the
Fokker-Planck equation \eqref{eq:2.7}-\eqref{eq:2.12}
is dissipative.

\begin{theorem}
 Let $f$ be a solution of the Fokker-Planck equation \eqref{eq:2.7}-\eqref{eq:2.12} with
 velocities $\vec{v}_{\Delta\vec{\alpha}}$, $\vec{v}_{\vec{a}}$ as defined in
 \eqref{eq:2.9}. If in addition, the relaxation time scales and the
 fluctuation parameters satisfy the relation,
 \begin{equation}
  \label{eq:2.8}
   \frac{6\gamma}{\beta^2_{\Delta\vec{\alpha}}}
   =
  \frac{2\eta}{\beta^2_{\vec{a}}},
 \end{equation}
 which in turn, determines the parameter $D$ as,
\begin{equation}
 \label{eq:2.10}
  D :=
  \frac{\beta^2_{\Delta\vec{\alpha}}}{6\gamma}
  =
  \frac{\beta^2_{\vec{a}}}{2\eta},
\end{equation}
 then, the  Fokker-Planck equation \eqref{eq:2.7}-\eqref{eq:2.12}  satisfies the following
 energy law,
 \begin{equation}
  \label{eq:2.11}
   \begin{split}
    &\quad
    \frac{d}{dt}
    \iint_{\Omega\times\Omega_{\mathrm{TJ}}}
    (Df\log f+fE)
    \,d\Delta\vec{\alpha}d\vec{a} \\
    &=
    -
    \frac{\beta^2_{\Delta\vec{\alpha}}}{2D}
    \iint_{\Omega\times\Omega_{\mathrm{TJ}}}
    f
    \left|
    \nabla^\Omega_{\Delta\vec{\alpha}}
    \left(
    D\log f
    +
    E
    \right)
    \right|^2
    \,d\Delta\vec{\alpha}d\vec{a}
    -
    \frac{\beta^2_{\vec{a}}}{2D}
    \iint_{\Omega\times\Omega_{\mathrm{TJ}}}
    f
    \left|
    \nabla_{\vec{a}}
    \left(
    D
    \log f
    +
    E
    \right)
    \right|^2
    \,d\Delta\vec{\alpha}d\vec{a}.
   \end{split}
 \end{equation} Here, $\iint_{\Omega\times\Omega_{\mathrm{TJ}}}
    (Df\log f+fE) \,d\Delta\vec{\alpha}d\vec{a}$ represents the (scaled) free energy of the
 Fokker-Planck system \eqref{eq:2.7}-\eqref{eq:2.12}.
\end{theorem}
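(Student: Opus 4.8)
The plan is to compute $\frac{d}{dt}\iint_{\Omega\times\Omega_{\mathrm{TJ}}}(Df\log f + fE)\,d\Delta\vec{\alpha}\,d\vec{a}$ directly by differentiating under the integral sign and substituting the Fokker-Planck equation \eqref{eq:2.7} for $\partial_t f$. First I would note that $\frac{d}{dt}(Df\log f + fE) = (D\log f + D + E)\partial_t f$, and since the total mass $\iint f$ is conserved (which follows from integrating \eqref{eq:2.7} and using the boundary conditions \eqref{eq:2.12}), the contribution of the constant $D$ drops out. So the time derivative equals $\iint (D\log f + E)\,\partial_t f$. Then I would replace $\partial_t f$ using \eqref{eq:2.7}, writing $\partial_t f = -\nabla^\Omega_{\Delta\vec{\alpha}}\cdot(\vec{v}_{\Delta\vec{\alpha}} f) - \nabla_{\vec{a}}\cdot(\vec{v}_{\vec{a}} f) + \frac{\beta^2_{\Delta\vec{\alpha}}}{2}\Delta^\Omega_{\Delta\vec{\alpha}} f + \frac{\beta^2_{\vec{a}}}{2}\Delta_{\vec{a}} f$.

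Next I would integrate by parts in each of the four terms, moving the gradient onto $D\log f + E$. The key algebraic observation is that since $\vec{v}_{\Delta\vec{\alpha}} = -3\gamma\nabla^\Omega_{\Delta\vec{\alpha}} E$ and using \eqref{eq:2.10} so that $\beta^2_{\Delta\vec{\alpha}}/2 = 3\gamma D$, the drift term and the diffusion term combine: the $\Delta\vec{\alpha}$-contribution becomes $-3\gamma\iint f\,\nabla^\Omega_{\Delta\vec{\alpha}}(D\log f + E)\cdot\nabla^\Omega_{\Delta\vec{\alpha}} E$ from the drift plus $-3\gamma D\iint \nabla^\Omega_{\Delta\vec{\alpha}} f\cdot\nabla^\Omega_{\Delta\vec{\alpha}}(D\log f + E)$ from the Laplacian, and using $\nabla^\Omega_{\Delta\vec{\alpha}} f = f\,\nabla^\Omega_{\Delta\vec{\alpha}}\log f$ these sum to $-3\gamma\iint f\,|\nabla^\Omega_{\Delta\vec{\alpha}}(D\log f + E)|^2$, which can be rewritten as $-\frac{\beta^2_{\Delta\vec{\alpha}}}{2D}\iint f\,|\nabla^\Omega_{\Delta\vec{\alpha}}(D\log f + E)|^2$. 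The same computation with $\vec{a}$ in place of $\Delta\vec{\alpha}$, using $\vec{v}_{\vec{a}} = -\eta\nabla_{\vec{a}} E$ and $\beta^2_{\vec{a}}/2 = \eta D$, gives the second term on the right side of \eqref{eq:2.11}. Adding the two pieces yields exactly the claimed energy law.

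The one place requiring care — and the main obstacle — is justifying the integration by parts, i.e., checking that all boundary terms vanish. Each integration by parts on the $\Omega$ factor produces a boundary integral over $\partial\Omega\times\Omega_{\mathrm{TJ}}$ of the form $f\,\nabla^\Omega_{\Delta\vec{\alpha}}(\cdots)\cdot\vec{\nu}_{\Delta\vec{\alpha}}$, and combining the drift and diffusion boundary terms gives precisely $f\,\nabla^\Omega_{\Delta\vec{\alpha}}(D\log f + E)\cdot\vec{\nu}_{\Delta\vec{\alpha}}$ up to the constant $\beta^2_{\Delta\vec{\alpha}}/(2D)$ — which is exactly what the first boundary condition in \eqref{eq:2.12} sets to zero (noting $D\log f + E = D(\log f + \frac{6\gamma}{\beta^2_{\Delta\vec{\alpha}}} E)$ by \eqref{eq:2.10}). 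Similarly the boundary integrals over $\Omega\times\partial\Omega_{\mathrm{TJ}}$ vanish by the second condition in \eqref{eq:2.12}. I would also remark that $\Omega$ is a flat two-dimensional domain (a planar polygon inside the plane $\{\Delta^{(1)}\alpha + \Delta^{(2)}\alpha + \Delta^{(3)}\alpha = 0\}$), so the tangential divergence theorem applies on $\Omega$ in the usual way, and that all manipulations are formal in the sense that they presuppose $f$ is a sufficiently smooth, positive solution — the rigorous existence and regularity being deferred to Section~\ref{sec5}.
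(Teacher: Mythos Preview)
Your proposal is correct and follows essentially the same approach as the paper: differentiate the free energy, substitute the Fokker-Planck equation, integrate by parts using the boundary conditions \eqref{eq:2.12}, and use the fluctuation-dissipation relation \eqref{eq:2.10} to combine the drift and diffusion contributions into the squared gradients. The only organizational difference is that the paper first rewrites the right-hand side of \eqref{eq:2.7} in the compact divergence form $\frac{\beta^2_{\Delta\vec{\alpha}}}{2}\nabla^\Omega_{\Delta\vec{\alpha}}\cdot\bigl(f\nabla^\Omega_{\Delta\vec{\alpha}}(\log f+\tfrac{6\gamma}{\beta^2_{\Delta\vec{\alpha}}}E)\bigr)+\frac{\beta^2_{\vec{a}}}{2}\nabla_{\vec{a}}\cdot\bigl(f\nabla_{\vec{a}}(\log f+\tfrac{2\eta}{\beta^2_{\vec{a}}}E)\bigr)$ and then multiplies by $D(1+\log f)+E$ (so the constant $D$ drops out upon taking the gradient rather than via mass conservation), but this is the same computation in a slightly different order.
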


\begin{proof}
First, we use expression \eqref{eq:2.9} for the velocities $\vec{v}_{\Delta\vec{\alpha}}$ and $\vec{v}_{\vec{a}}$ in the
 Fokker-Planck equation \eqref{eq:2.7}, and we have,
 \begin{equation*}
  \frac{\partial f}{\partial t}
   =
   \frac{\beta^2_{\Delta\vec{\alpha}}}{2}
   \nabla^\Omega_{\Delta\vec{\alpha}}
   \cdot
   \left(
    f
    \nabla^\Omega_{\Delta\vec{\alpha}}
    \left(
     \log f
     +
     \frac{6\gamma}{\beta^2_{\Delta\vec{\alpha}}}
     E
    \right)
   \right)
   +
   \frac{\beta^2_{\vec{a}}}{2}
   \nabla_{\vec{a}}
   \cdot
   \left(
    f
    \nabla_{\vec{a}}
    \left(
     \log f
     +
     \frac{2\eta}{\beta^2_{\vec{a}}}E
    \right)
   \right)
   .
 \end{equation*}
 Next, we multiply the Fokker-Planck equation
 \eqref{eq:2.7} by $D(1+\log f)+E$ and integrate over $\Omega\times\Omega_{\mathrm{TJ}}$.
Note that, 
 \begin{equation}
  \frac{\partial f}{\partial t}
   \left(
    D(1+\log f)+E
   \right)
   =
  \frac{\partial}{\partial t}(Df\log f+fE).
 \end{equation}
 Hence, using the natural boundary conditions \eqref{eq:2.12}, we have,
 \begin{equation}
  \begin{split}
   &\quad
   \frac{d}{dt}
   \iint_{\Omega\times\Omega_{\mathrm{TJ}}}
   (Df\log f+fE)
   \,d\Delta\vec{\alpha}d\vec{a} \\
   &=
   -
   \frac{\beta^2_{\Delta\vec{\alpha}}}{2}
   \iint_{\Omega\times\Omega_{\mathrm{TJ}}}
   \left(
   f
   \nabla^\Omega_{\Delta\vec{\alpha}}
   \left(
   \log f
   +
   \frac{6\gamma}{\beta^2_{\Delta\vec{\alpha}}}
   E
   \right)
   \right)
   \cdot
   \nabla^\Omega_{\Delta\vec{\alpha}}
   \left(
   D\log f+E\right)
   \,d\Delta\vec{\alpha}d\vec{a} \\
   &\qquad
   -
   \frac{\beta^2_{\vec{a}}}{2}
   \iint_{\Omega\times\Omega_{\mathrm{TJ}}}
   \left(
   f
   \nabla_{\vec{a}}
   \left(
   \log f
   +
   \frac{2\eta}{\beta^2_{\vec{a}}}
   E
   \right)
   \right)
   \cdot
   \nabla_{\vec{a}}
   \left(
   D\log f+E\right)
   \,d\Delta\vec{\alpha}d\vec{a}.
  \end{split}
 \end{equation}
 Finally, using \eqref{eq:2.10},
 we have energy dissipation,
 \begin{equation*}
  \begin{split}
   &\quad
   \frac{d}{dt}
   \iint_{\Omega\times\Omega_{\mathrm{TJ}}}
   (Df\log f+fE)
   \,d\Delta\vec{\alpha}d\vec{a} \\
   &=
   -
   \frac{\beta^2_{\Delta\vec{\alpha}}}{2D}
   \iint_{\Omega\times\Omega_{\mathrm{TJ}}}
   f
   \left|
   \nabla^\Omega_{\Delta\vec{\alpha}}
   \left(
   D\log f
   +
   E
   \right)
   \right|^2
   \,d\Delta\vec{\alpha}d\vec{a}
   -
   \frac{\beta^2_{\vec{a}}}{2D}
   \iint_{\Omega\times\Omega_{\mathrm{TJ}}}
   f
   \left|
   \nabla_{\vec{a}}
   \left(
   D
   \log f
   +
   E
   \right)
   \right|^2
   \,d\Delta\vec{\alpha}d\vec{a}.
  \end{split}
 \end{equation*}
\end{proof}

\begin{remark}
 The condition \eqref{eq:2.8} is related to the fluctuation-dissipation
 theorem ~\cite{PhysRev.83.34, Kubo_1966}. The system will
 approach the equilibrium state of the free energy $ \iint_{\Omega\times\Omega_{\mathrm{TJ}}}
   (Df\log f+fE)
   \,d\Delta\vec{\alpha}d\vec{a}$, which
 coincides with the Boltzmann distribution for the grain boundary
 energy $E$,
 \begin{equation}\label{BoltE}
  f_\infty(\Delta\vec{\alpha},\vec{a})
  =
  \Cr{const:3.1}
  \exp
  \left(-\frac{E(\Delta\vec{\alpha},\vec{a})}{D}\right),
 \end{equation}
 for some constant $\Cr{const:3.1}>0$. Relation \eqref{eq:2.8}, which
 is also called
 the {\it fluctuation-dissipation principle}, ensures not only the dissipation
 structure  \eqref{eq:2.11}, but also that the solution of the Fokker-Planck
 equation \eqref{eq:2.7}-\eqref{eq:2.12} converges to the Boltzmann
 distribution \eqref{BoltE}.
\end{remark}

\section{Well-posedness of the Fokker-Planck equation}\label{sec5}
In this section, we study well-posedness of the proposed Fokker-Planck model
\eqref{eq:2.7} under the fluctuation-dissipation relation
\eqref{eq:2.8}, and the natural boundary conditions,
\begin{equation}
 \label{eq:3.8}
 \left\{
  \begin{aligned}
   \frac{\partial f}{\partial t}
   +
   \nabla^\Omega_{\Delta\vec{\alpha}}
   \cdot
   \left(
   \vec{v}_{\Delta\vec{\alpha}}f
   \right)
   +
   \nabla_{\vec{a}}
   \cdot
   \left(
   \vec{v}_{\vec{a}}f
   \right)
   &=
   \frac{\beta^2_{\Delta\vec{\alpha}}}{2}
   \Delta_{\Delta\vec{\alpha}}^{\Omega} f
   +
   \frac{\beta^2_{\vec{a}}}{2}
   \Delta_{\vec{a}} f, \quad \Delta\alpha\in\Omega,\ \vec{a}\in\Omega_{\mathrm{TJ}},\ t>0, \\
   \vec{v}_{\Delta\vec{\alpha}}
   &=
   -3\gamma\nabla^\Omega_{\Delta\vec{\alpha}} E,
   \\
   \vec{v}_{\vec{a}}
   &=
   -\eta\frac{\delta E}{\delta \vec{a}}
   =
   -\eta\nabla_{\vec{a}}E, \\
   \left(
   \frac{\beta^2_{\Delta\vec{\alpha}}}{2}
   \nabla^\Omega_{\Delta\vec{\alpha}}f
   -
   \vec{v}_{\Delta\vec{\alpha}}f
   \right)
   \cdot
   \vec{\nu}_{\Delta\vec{\alpha}}
   \bigg|_{\partial\Omega\times\Omega_{\mathrm{TJ}}}
   &=
   0, \\
   \left(
   \frac{\beta^2_{\vec{a}}}{2}
   \nabla_{\vec{a}}f
   -
   \vec{v}_{\vec{a}}f
   \right)
   \cdot
   \vec{\nu}_{\vec{a}}
   \bigg|_{\Omega\times\partial\Omega_{\mathrm{TJ}}}
   &=
   0, \\
   f(\Delta\vec{\alpha},\vec{a},0)
   &=
   f_0(\Delta\vec{\alpha},\vec{a}).
  \end{aligned}
\right.
\end{equation}
Here, we assume that bounded domain $\Omega_{\mathrm{TJ}}\subset\R^2$
is a domain with
$C^2$ boundary and that,
\begin{equation}
 \Omega
  :=
  \left\{
   \Delta\vec{\alpha}
   =
   \left(
    \Delta^{(1)}\alpha,\Delta^{(2)}\alpha,\Delta^{(3)}\alpha
   \right)
   \in \left(
	-\frac{\pi}{4},\frac\pi4
       \right)^3
   :
   \Delta^{(1)}\alpha+\Delta^{(2)}\alpha+\Delta^{(3)}\alpha=0
  \right\}
  \subset\R^3.
\end{equation}
As in Section \ref{sec2}, the parameters $\beta_{\Delta\vec{\alpha}}$, $\beta_{\vec{a}}$, $\gamma$,
and $\eta$ are positive constants satisfying the fluctuation-dissipation
relation \eqref{eq:2.8}. The vectors $\vec{\nu}_{\Delta\vec{\alpha}}$ and $\vec{\nu}_{\vec{a}}$ are an outer unit normal vector to
$\partial\Omega$ and $\partial\Omega_{\mathrm{TJ}}$, respectively. Recall from Section
\ref{sec2} that the energy of the
system $E$ is given
by,
\begin{equation}
 E(\Delta\vec{\alpha},\vec{a})
  =
  \sum_{j=1}^3
  \sigma(\Delta^{(j)}\alpha)|\vec{a}-\vec{x}^{(j)}|,
\end{equation}
where grain boundary energy density $\sigma$ is a given $C^1$ function and $\vec{x}^{(j)}\in\R^2$ is a
fixed position for $j=1,2,3$. The initial data
$f_0:\Omega\times\Omega_{\mathrm{TJ}}\rightarrow\R$ is assumed to be
positive and,
\begin{equation}
 \label{eq:3.35}
 \int_{\Omega\times\Omega_{\mathrm{TJ}}}
  f_0(\Delta\vec{\alpha},\vec{a})
  \,d\Delta\vec{\alpha}d\vec{a}
  =
  1.
\end{equation}

From the energy law \eqref{eq:2.11}, one can expect
that the asymptotic profile
$f_\infty=f_\infty(\Delta\vec{\alpha},\vec{a})$ of \eqref{eq:3.8} is
given by,
\begin{equation}
\label{eq:3.2}
 f_\infty(\Delta\vec{\alpha},\vec{a})
  =
  \Cl{const:3.1}
  \exp
  \left(-\frac{E(\Delta\vec{\alpha},\vec{a})}{D}\right),
\end{equation}
for some constant $\Cr{const:3.1}>0$, where $D>0$ is defined by
\eqref{eq:2.10}. Since $f$ is a probability density function, the
constant $\Cr{const:3.1}$ satisfies,
\begin{equation}
 \label{eq:3.6}
 \frac{1}{\Cr{const:3.1}}
  =
  \iint_{\Omega\times\Omega_{\mathrm{TJ}}}
  \exp
  \left(-\frac{E(\Delta\vec{\alpha},\vec{a})}{D}\right)
  \,d\Delta\vec{\alpha}d\vec{a}.
\end{equation}
In order to show that solution of the Fokker-Planck equation
\eqref{eq:3.8} converges to $f_\infty$, we will introduce the change of
variable  $g(\Delta\vec{\alpha},\vec{a},t)$,
\begin{equation}
 \label{eq:3.1}
 f(\Delta\vec{\alpha},\vec{a},t)
  =
  g(\Delta\vec{\alpha},\vec{a},t)
  \exp
  \left(-\frac{E(\Delta\vec{\alpha},\vec{a})}{D}\right),
\end{equation}
and we will prove that $g$ converges to the constant $\Cr{const:3.1}$.

It is important to note, that the grain boundary energy $E$ may not belong to
$H^2(\Omega\times\Omega_{\mathrm{TJ}})$,  hence a solution of
\eqref{eq:3.8} will not be smooth in general. Thus, we will introduce the notion
of a weak solution of \eqref{eq:3.8}, similar to (cf.~\cite{MR0241822}).

\begin{definition}
 A function
 $f:\Omega\times\Omega_{\mathrm{TJ}}\times[0,\infty)\rightarrow\R$ is a
 weak solution of \eqref{eq:3.8} if,
 \begin{equation}
  f\in L^\infty(0,\infty;L^2(\Omega\times\Omega_{\mathrm{TJ}}))
   \
   \text{with}
   \
   \nabla^\Omega_{\Delta\vec{\alpha}} f,\ \nabla_{\vec{a}} f
   \in L^2(0,\infty;L^2(\Omega\times\Omega_{\mathrm{TJ}})),
 \end{equation}
 and
 \begin{equation}
  \label{eq:3.9}
   \begin{split}
    &\quad
    \iint_{\Omega\times\Omega_{\mathrm{TJ}}}
    f\phi
    \,d\Delta\vec{\alpha}d\vec{a}
    \bigg|_{t=T}
    -
    \int_0^T\,dt
    \iint_{\Omega\times\Omega_{\mathrm{TJ}}}
    f\phi_t
    \,d\Delta\vec{\alpha}d\vec{a}
    \\
    &\quad
    +
    \int_0^T\,dt
    \iint_{\Omega\times\Omega_{\mathrm{TJ}}}
    \left(
    \left(
    \frac{\beta^2_{\Delta\vec{\alpha}}}{2}
    \nabla^\Omega_{\Delta\vec{\alpha}} f
    -
    \vec{v}_{\Delta\vec{\alpha}}f
    \right)
    \cdot
    \nabla^\Omega_{\Delta\vec{\alpha}}\phi
    +
    \left(
    \frac{\beta^2_{\vec{a}}}{2}
    \nabla_{\vec{a}}f
    -
    \vec{v}_{\vec{a}}f
    \right)
    \cdot
    \nabla_{\vec{a}}\phi
    \right)
    \,d\Delta\vec{\alpha}d\vec{a} \\
    &=
   \iint_{\Omega\times\Omega_{\mathrm{TJ}}}
   f_0\phi
   \,d\Delta\vec{\alpha}d\vec{a}
   \bigg|_{t=0},
   \end{split}
 \end{equation}
 for all $\phi\in
 C^\infty(\overline{\Omega\times\Omega_{\mathrm{TJ}}\times[0,\infty)})$
 and almost every $T>0$.
\end{definition}

We also recall H\"older's inequality \cite[p.77]{MR1814364} and Gronwall's inequality
\cite[Appendix B]{MR1625845} that we will use in our
analysis below.
\begin{lemma}
 [H\"older's inequality] 
 For functions $u\in L^p(\Omega\times\Omega_{\mathrm{TJ}})$,  
 $v\in L^q(\Omega\times\Omega_{\mathrm{TJ}})$, $1/p+1/q=1$, we have that 
 \begin{equation*}
  \iint_{\Omega\times\Omega_{\mathrm{TJ}}}uv
   \,d\Delta\vec{\alpha}d\vec{a} 
   \leq
   \left(
    \iint_{\Omega\times\Omega_{\mathrm{TJ}}}|u|^p
    \,d\Delta\vec{\alpha}d\vec{a} 
   \right)^{1/p}
   \left(
    \iint_{\Omega\times\Omega_{\mathrm{TJ}}}|v|^q
    \,d\Delta\vec{\alpha}d\vec{a} 
   \right)^{1/q}.
 \end{equation*}
\end{lemma}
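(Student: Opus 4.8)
The plan is to deduce this from the pointwise Young inequality $ab\le \frac{a^{p}}{p}+\frac{b^{q}}{q}$ (valid for $a,b\ge 0$ and conjugate exponents $1/p+1/q=1$), followed by an integration over $\Omega\times\Omega_{\mathrm{TJ}}$ and a homogeneity/normalization reduction. Throughout I write $\|u\|_{L^{p}}:=\left(\iint_{\Omega\times\Omega_{\mathrm{TJ}}}|u|^{p}\,d\Delta\vec{\alpha}d\vec{a}\right)^{1/p}$, and I note that $|uv|$ is measurable so that $\iint_{\Omega\times\Omega_{\mathrm{TJ}}}|uv|\,d\Delta\vec{\alpha}d\vec{a}$ is well defined in $[0,\infty]$ before any estimate is invoked; here $d\Delta\vec{\alpha}$ is the $2$-dimensional surface measure on the hyperplane slice $\Omega\subset\R^{3}$, but this plays no role since the argument works on an arbitrary measure space.

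First I would dispose of the degenerate cases. If $\|u\|_{L^{p}}=0$ then $u=0$ a.e., so $uv=0$ a.e.\ and both sides vanish; likewise if $\|v\|_{L^{q}}=0$. If either of $\|u\|_{L^{p}},\|v\|_{L^{q}}$ equals $+\infty$, the right-hand side is $+\infty$ and there is nothing to prove. So I may assume $0<\|u\|_{L^{p}},\|v\|_{L^{q}}<\infty$. Since replacing $u$ by $\lambda u$ multiplies both sides of the claimed inequality by $|\lambda|$ (and similarly in $v$), I may replace $u$ by $u/\|u\|_{L^{p}}$ and $v$ by $v/\|v\|_{L^{q}}$; thus it suffices to prove the inequality under the normalization $\|u\|_{L^{p}}=\|v\|_{L^{q}}=1$, i.e.\ to show $\iint_{\Omega\times\Omega_{\mathrm{TJ}}}|uv|\,d\Delta\vec{\alpha}d\vec{a}\le 1$.

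Next I would establish Young's inequality $ab\le \frac{a^{p}}{p}+\frac{b^{q}}{q}$ for $a,b\ge 0$. The case $ab=0$ is trivial; for $a,b>0$ it follows from the concavity of $\log$ on $(0,\infty)$: $\log\bigl(\tfrac{a^{p}}{p}+\tfrac{b^{q}}{q}\bigr)\ge \tfrac1p\log a^{p}+\tfrac1q\log b^{q}=\log(ab)$, and exponentiating gives the claim. Applying this pointwise with $a=|u(\Delta\vec{\alpha},\vec{a})|$, $b=|v(\Delta\vec{\alpha},\vec{a})|$ and integrating yields, under the normalization, $\iint_{\Omega\times\Omega_{\mathrm{TJ}}}|uv|\,d\Delta\vec{\alpha}d\vec{a}\le \tfrac1p\iint_{\Omega\times\Omega_{\mathrm{TJ}}}|u|^{p}\,d\Delta\vec{\alpha}d\vec{a}+\tfrac1q\iint_{\Omega\times\Omega_{\mathrm{TJ}}}|v|^{q}\,d\Delta\vec{\alpha}d\vec{a}=\tfrac1p+\tfrac1q=1$, which is exactly what the reduction required. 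Undoing the normalization gives $\iint_{\Omega\times\Omega_{\mathrm{TJ}}}|uv|\,d\Delta\vec{\alpha}d\vec{a}\le \|u\|_{L^{p}}\|v\|_{L^{q}}$, and since $\iint_{\Omega\times\Omega_{\mathrm{TJ}}}uv\,d\Delta\vec{\alpha}d\vec{a}\le \iint_{\Omega\times\Omega_{\mathrm{TJ}}}|uv|\,d\Delta\vec{\alpha}d\vec{a}$, the stated inequality follows.

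There is no genuine obstacle here: this is the classical argument, and the statement is in any case quoted verbatim from \cite{MR1814364}, so in the paper it is legitimate simply to cite it. The only point that warrants a line of care is the justification of the normalization step, which rests on the degree-one positive homogeneity of both sides in each of $u$ and $v$ together with the reduction to the case of finite, nonzero norms carried out beforehand.
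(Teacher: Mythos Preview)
Your argument is correct and is the standard textbook proof of H\"older's inequality via Young's inequality and normalization. The paper does not actually prove this lemma at all: it is stated with a citation to \cite[p.~77]{MR1814364} and then used as a black box, so there is no paper proof to compare against beyond the bare reference you already noted.
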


\begin{lemma}
 [Gronwall's inequality] Let $\zeta(\cdot)$ be a nonnegative, absolutely continuous function on
 $[0,T]$, which satisfies for a.e. t, the differential inequality
\begin{equation*}
 \zeta'(t)
  \leq \phi(t)\zeta(t),
\end{equation*}
 where $\phi(t)$ are summable function on $[0,T]$. Then,
 \begin{equation*}
  \zeta(t)\leq e^{\int_0^t\phi(s)\,ds}\zeta(0),
 \end{equation*}
 for all $0\leq t\leq T$.
\end{lemma}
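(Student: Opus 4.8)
The plan is the classical integrating-factor argument. First I would introduce
\begin{equation*}
  \eta(t) := \exp\left(-\int_0^t \phi(s)\,ds\right), \qquad 0 \le t \le T,
\end{equation*}
which is well defined and strictly positive because $\phi$ is summable on $[0,T]$; moreover $t \mapsto \int_0^t \phi(s)\,ds$ is absolutely continuous, so, composing with the (locally Lipschitz) exponential, $\eta$ is absolutely continuous on $[0,T]$ and satisfies $\eta'(t) = -\phi(t)\eta(t)$ for a.e.\ $t$.

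Next I would examine the product $w(t) := \zeta(t)\eta(t)$. Since $\zeta$ and $\eta$ are both absolutely continuous on the compact interval $[0,T]$, so is $w$, and the product rule gives, for a.e.\ $t$,
\begin{equation*}
  w'(t) = \zeta'(t)\eta(t) + \zeta(t)\eta'(t) = \bigl(\zeta'(t) - \phi(t)\zeta(t)\bigr)\eta(t) \le 0,
\end{equation*}
using the differential inequality $\zeta'(t) \le \phi(t)\zeta(t)$ together with $\eta(t) > 0$. Then, by the fundamental theorem of calculus for absolutely continuous functions, $w(t) - w(0) = \int_0^t w'(s)\,ds \le 0$ for every $t \in [0,T]$; hence $w(t) \le w(0) = \zeta(0)\eta(0) = \zeta(0)$, and dividing by $\eta(t) > 0$ yields $\zeta(t) \le \zeta(0)\exp\left(\int_0^t \phi(s)\,ds\right)$, which is the claim.

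The only delicate point is the measure-theoretic bookkeeping: that a product of absolutely continuous functions on a bounded interval is again absolutely continuous (so that the product rule holds a.e.\ and the fundamental theorem of calculus applies to $w$), and that an absolutely continuous function with a.e.\ nonpositive derivative is nonincreasing. Both are standard; in fact the nonnegativity of $\zeta$ assumed in the hypotheses is not needed for the above chain of inequalities. No structure specific to the grain-boundary problem enters this lemma, so the argument is entirely soft.
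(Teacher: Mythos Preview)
Your argument is correct and is precisely the standard integrating-factor proof (the one given, for instance, in Evans's \emph{Partial Differential Equations}, Appendix B, which is the reference the paper cites). The paper itself does not supply a proof of this lemma; it merely recalls the statement from \cite[Appendix B]{MR1625845}, so there is nothing further to compare.
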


\subsection{Uniqueness and existence of a weak solution to the Fokker-Planck equation}\label{sec6}

Here, we establish uniqueness and existence of a weak solution to \eqref{eq:3.8}.
First, uniqueness of a weak solution to \eqref{eq:3.8} is
considered. Since the Fokker-Planck equation \eqref{eq:3.8} is linear,
it is enough to deduce that the solution is zero provided that the initial
data is zero.

\begin{proposition}
 Let $f:\Omega\times\Omega_{\mathrm{TJ}}\times[0,\infty)\rightarrow\R$
 be a weak solution of~\eqref{eq:3.8} with $f_0=0$. Assume that $\sigma$
 is a $C^1$ function on $\R$. Then $f=0$ in
 $\Omega\times\Omega_{\mathrm{TJ}}\times[0,\infty)$.
\end{proposition}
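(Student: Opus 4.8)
The plan is to run the standard $L^2$ energy estimate for the linear parabolic problem \eqref{eq:3.8}, using the solution $f$ itself as a test function in the weak formulation \eqref{eq:3.9} and then closing the estimate with Gronwall's inequality.

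Two preliminary observations make this work. First, the drift fields are \emph{bounded}: $\vec{v}_{\Delta\vec{\alpha}}=-3\gamma\nabla^\Omega_{\Delta\vec{\alpha}}E$ and $\vec{v}_{\vec{a}}=-\eta\nabla_{\vec{a}}E$ belong to $L^\infty(\Omega\times\Omega_{\mathrm{TJ}})$, because $\sigma\in C^1(\R)$ forces $\sigma$ and $\sigma_\theta$ to be bounded on the compact interval $[-\pi/4,\pi/4]$, while $|\vec{a}-\vec{x}^{(j)}|$ is bounded on the bounded set $\overline{\Omega_{\mathrm{TJ}}}$ and $(\vec{a}-\vec{x}^{(j)})/|\vec{a}-\vec{x}^{(j)}|$ has modulus one; set $M:=\|\vec{v}_{\Delta\vec{\alpha}}\|_{L^\infty}+\|\vec{v}_{\vec{a}}\|_{L^\infty}<\infty$. (The singularity of $\nabla_{\vec{a}}\vec{v}_{\vec{a}}$ at $\vec{a}=\vec{x}^{(j)}$ is harmless, since the drift enters \eqref{eq:3.9} only in divergence form, tested against gradients of test functions.) Second, from \eqref{eq:3.8} together with $\nabla^\Omega_{\Delta\vec{\alpha}}f,\nabla_{\vec{a}}f\in L^2(0,\infty;L^2)$ and the boundedness of the drifts, one gets $\partial_t f\in L^2(0,T;H^1(\Omega\times\Omega_{\mathrm{TJ}})^{*})$ for every $T>0$, so that $f\in C([0,T];L^2(\Omega\times\Omega_{\mathrm{TJ}}))$, the map $t\mapsto\|f(t)\|_{L^2}^2$ is absolutely continuous, and the initial condition $f(0)=f_0=0$ holds in $L^2$.

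With these in hand I would substitute $\phi=f$ into \eqref{eq:3.9} --- made rigorous by a Steklov-averaging argument in the time variable (cf.~\cite{MR0241822}), which also disposes of the smoothness-in-time requirement on test functions. No boundary integrals arise, because the natural (no-flux) boundary conditions of \eqref{eq:3.8} are already built into \eqref{eq:3.9}. Using $f_0=0$ and $\int_0^T\iint ff_t=\tfrac12\|f(T)\|_{L^2}^2$, this yields, for a.e.\ $T>0$,
\begin{equation*}
 \begin{split}
  &\frac12\|f(T)\|_{L^2}^2
  +
  \frac{\beta^2_{\Delta\vec{\alpha}}}{2}\int_0^T\|\nabla^\Omega_{\Delta\vec{\alpha}}f\|_{L^2}^2\,dt
  +
  \frac{\beta^2_{\vec{a}}}{2}\int_0^T\|\nabla_{\vec{a}}f\|_{L^2}^2\,dt \\
  &\qquad =
  \int_0^T
  \iint_{\Omega\times\Omega_{\mathrm{TJ}}}
  \bigl(\vec{v}_{\Delta\vec{\alpha}}f\cdot\nabla^\Omega_{\Delta\vec{\alpha}}f
  +
  \vec{v}_{\vec{a}}f\cdot\nabla_{\vec{a}}f\bigr)
  \,d\Delta\vec{\alpha}\,d\vec{a}\,dt .
 \end{split}
\end{equation*}
Bounding the right-hand side by $M\int_0^T\|f\|_{L^2}\bigl(\|\nabla^\Omega_{\Delta\vec{\alpha}}f\|_{L^2}+\|\nabla_{\vec{a}}f\|_{L^2}\bigr)\,dt$ and applying Young's inequality with a sufficiently small parameter, one absorbs the two gradient terms into the left-hand side and is left with
\begin{equation*}
 \|f(T)\|_{L^2}^2 \leq C\int_0^T\|f(t)\|_{L^2}^2\,dt,\qquad C=C(M,\beta_{\Delta\vec{\alpha}},\beta_{\vec{a}}).
\end{equation*}

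Finally, set $\zeta(T):=\int_0^T\|f(t)\|_{L^2}^2\,dt$, which is nonnegative and absolutely continuous with $\zeta(0)=0$ and $\zeta'(T)=\|f(T)\|_{L^2}^2\leq C\zeta(T)$ for a.e.\ $T$; Gronwall's inequality then forces $\zeta\equiv0$, hence $f=0$ in $\Omega\times\Omega_{\mathrm{TJ}}\times[0,\infty)$. I expect the only genuinely delicate point to be the justification that $\phi=f$ is an admissible choice --- i.e.\ the time-regularity bookkeeping ($\partial_t f$ in the dual space, the Steklov-averaging step, continuity into $L^2$ and identification of the trace at $t=0$) --- while the rest is a routine energy computation; the boundedness of the drift fields is exactly what prevents the singularity of $E$ in $\vec{a}$ from entering the estimate.
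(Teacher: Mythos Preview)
Your proof is correct and follows essentially the same $L^2$ energy argument as the paper: test with $f$, bound the drift--gradient cross terms via Young's inequality using $\vec{v}_{\Delta\vec{\alpha}},\vec{v}_{\vec{a}}\in L^\infty$, absorb the gradient terms, and conclude by Gronwall. In fact the paper explicitly labels its proof ``formal,'' so your care with the time-regularity bookkeeping (Steklov averaging, $\partial_t f$ in the dual, $C([0,T];L^2)$) goes somewhat beyond what is written there.
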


\begin{proof}
 We give a formal proof. Take $f$ as a test function for
 \eqref{eq:3.8}, namely,  multiply \eqref{eq:3.8} by $f$ and integrate over
 $\Omega\times\Omega_{\mathrm{TJ}}$.
 Then, using integration by parts and the natural boundary conditions, we
 obtain that,
 \begin{equation}\label{equn1}
  \begin{split}
   \frac12\frac{d}{dt}
   \iint_{\Omega\times\Omega_{\mathrm{TJ}}}
   |f|^2
   \,d\Delta\vec{\alpha}d\vec{a}
   &=
   -
   \frac{\beta^2_{\Delta\vec{\alpha}}}{2}
   \iint_{\Omega\times\Omega_{\mathrm{TJ}}}
   |\nabla^\Omega_{\Delta\vec{\alpha}} f|^2
   \,d\Delta\vec{\alpha}d\vec{a}
   -
   \frac{\beta^2_{\vec{a}}}{2}
   \iint_{\Omega\times\Omega_{\mathrm{TJ}}}
   |\nabla_{\vec{a}} f|^2
   \,d\Delta\vec{\alpha}d\vec{a} \\
   &\quad +
   \iint_{\Omega\times\Omega_{\mathrm{TJ}}}
   f\vec{v}_{\Delta\vec{\alpha}}\cdot\nabla^\Omega_{\Delta\vec{\alpha}} f
   \,d\Delta\vec{\alpha}d\vec{a}
   +
   \iint_{\Omega\times\Omega_{\mathrm{TJ}}}
   f\vec{v}_{\vec{a}}\cdot\nabla_{\vec{a}}f
   \,d\Delta\vec{\alpha}d\vec{a}. \\
    \end{split}
 \end{equation}
To estimate the third and the fourth terms on the right
hand side of \eqref{equn1}, we use Young's
inequality,
\begin{equation}
  \begin{split}
   &\quad
   \iint_{\Omega\times\Omega_{\mathrm{TJ}}}
   f\vec{v}_{\Delta\vec{\alpha}}\cdot\nabla^\Omega_{\Delta\vec{\alpha}} f
   \,d\Delta\vec{\alpha}d\vec{a} \\
   &\leq
   \frac{\beta^2_{\Delta\vec{\alpha}}}{2}
   \iint_{\Omega\times\Omega_{\mathrm{TJ}}}
   |\nabla^\Omega_{\Delta\vec{\alpha}} f|^2
   \,d\Delta\vec{\alpha}d\vec{a}
   +
   \frac{1}{2\beta^2_{\Delta\vec{\alpha}}}
   \iint_{\Omega\times\Omega_{\mathrm{TJ}}}
   |f|^2|\vec{v}_{\Delta\vec{\alpha}}|^2
   \,d\Delta\vec{\alpha}d\vec{a} \\
   &\leq
   \frac{\beta^2_{\Delta\vec{\alpha}}}{2}
   \iint_{\Omega\times\Omega_{\mathrm{TJ}}}
   |\nabla^\Omega_{\Delta\vec{\alpha}} f|^2
   \,d\Delta\vec{\alpha}d\vec{a}
   +
   \frac{\|\vec{v}_{\Delta\vec{\alpha}}\|^2_\infty}{2\beta^2_{\Delta\vec{\alpha}}}
   \iint_{\Omega\times\Omega_{\mathrm{TJ}}}
   |f|^2
   \,d\Delta\vec{\alpha}d\vec{a},
  \end{split} 
 \end{equation}
 and similarly,
 \begin{equation}
  \iint_{\Omega\times\Omega_{\mathrm{TJ}}}
   f\vec{v}_{\vec{a}}\cdot\nabla_{\vec{a}}f
   \,d\Delta\vec{\alpha}d\vec{a}
   \leq
   \frac{\beta^2_{\vec{a}}}{2}
   \iint_{\Omega\times\Omega_{\mathrm{TJ}}}
   |\nabla_{\vec{a}} f|^2
   \,d\Delta\vec{\alpha}d\vec{a}
   +
   \frac{\|\vec{v}_{\vec{a}}\|^2_\infty}{2\beta^2_{\vec{a}}}
   \iint_{\Omega\times\Omega_{\mathrm{TJ}}}
   |f|^2
   \,d\Delta\vec{\alpha}d\vec{a},
 \end{equation}
 where
 $\|\vec{v}_{\Delta\vec{\alpha}}\|_\infty=\sup_{\Omega\times\Omega_{\mathrm{TJ}}}|\vec{v}_{\Delta\vec{\alpha}}|$
 and
 $\|\vec{v}_{\vec{a}}\|_\infty=\sup_{\Omega\times\Omega_{\mathrm{TJ}}}|\vec{v}_{\vec{a}}|$. Thus,
 we have that,
 \begin{equation}
  \frac{d}{dt}
   \iint_{\Omega\times\Omega_{\mathrm{TJ}}}
   |f|^2
   \,d\Delta\vec{\alpha}d\vec{a}
   \leq
   \left(
    \frac{\|\vec{v}_{\Delta\vec{\alpha}}\|^2_\infty}{\beta^2_{\Delta\vec{\alpha}}}
    +
    \frac{\|\vec{v}_{\vec{a}}\|^2_\infty}{\beta^2_{\vec{a}}}
   \right)
   \iint_{\Omega\times\Omega_{\mathrm{TJ}}}
   |f|^2
   \,d\Delta\vec{\alpha}d\vec{a}. 
 \end{equation}
 Therefore,  the assertion of the proposition follows from the
 application of Gronwall's inequality.
\end{proof}

Next, we show existence of a weak solution to the Fokker-Planck
equation \eqref{eq:3.8}. To do that, we use change of variable
\eqref{eq:3.1} and we
derive the equation of $g$. By direct calculation of the derivative of
$f$ and the fluctuation-dissipation relation \eqref{eq:2.8} and
\eqref{eq:2.10}, we obtain,
\begin{equation}\label{eqex1}
 \begin{split}
  \frac{\partial f}{\partial t}
  &=
  \frac{\partial g}{\partial t}
  \exp
  \left(-\frac{E}{D}\right), \\
  \nabla^\Omega_{\Delta\vec{\alpha}}
  \cdot
  \left(
  \vec{v}_{\Delta\vec{\alpha}}f
  \right)
  &=
  \left(
  g
  (   
  \nabla^\Omega_{\Delta\vec{\alpha}}
  \cdot
  \vec{v}_{\Delta\vec{\alpha}}
  )
  +
  \vec{v}_{\Delta\vec{\alpha}}
  \cdot
  \nabla^\Omega_{\Delta\vec{\alpha}} g
  -
  \frac{g}{D}
  \vec{v}_{\Delta\vec{\alpha}}
  \cdot
  \nabla^\Omega_{\Delta\vec{\alpha}} E
  \right)
  \exp
  \left(-\frac{E}{D}\right)  \\
  &=
  \left(
  -3\gamma g
  \Delta_{\Delta\vec{\alpha}}^\Omega E
  -
  3\gamma
  \nabla^\Omega_{\Delta\vec{\alpha}} E
  \cdot
  \nabla^\Omega_{\Delta\vec{\alpha}} g
  +
  \frac{3\gamma g}{D}
  |\nabla^\Omega_{\Delta\vec{\alpha}} E|^2
  \right)
  \exp
  \left(-\frac{E}{D}\right), \\
  \frac{\beta^2_{\Delta\vec{\alpha}}}{2}   
  \Delta_{\Delta\vec{\alpha}}^\Omega f
  &=
  \frac{\beta^2_{\Delta\vec{\alpha}}}{2}
  \left(
  \Delta_{\Delta\vec{\alpha}}^\Omega g
  -
  \frac{2}{D}
  \nabla^\Omega_{\Delta\vec{\alpha}} E
  \cdot
  \nabla^\Omega_{\Delta\vec{\alpha}} g
  -
  \frac{g}{D}
  \Delta_{\Delta\vec{\alpha}}^\Omega E
  +
  \frac{g}{D^2}
  |\nabla^\Omega_{\Delta\vec{\alpha}} E|^2
  \right)
  \exp
  \left(-\frac{E}{D}\right) \\
  &=
  \left(
  3\gamma D
  \Delta_{\Delta\vec{\alpha}}^\Omega g
  -
  6\gamma
  \nabla^\Omega_{\Delta\vec{\alpha}} E
  \cdot
  \nabla^\Omega_{\Delta\vec{\alpha}} g
  -
  3\gamma g
  \Delta_{\Delta\vec{\alpha}}^\Omega E
  +
  \frac{3\gamma g}{D}
  |\nabla^\Omega_{\Delta\vec{\alpha}} E|^2
  \right)
  \exp
  \left(-\frac{E}{D}\right),
 \end{split} 
\end{equation}
Similarly, we have that,
\begin{equation}\label{eqex2}
 \begin{split}
  \nabla_{\vec{a}}
  \cdot
  \left(
  \vec{v}_{\vec{a}}f
  \right)
  &=
  \left(
  -\eta
  g
  \Delta_{\vec{a}} E
  -
  \eta
  \nabla_{\vec{a}} E
  \cdot
  \nabla_{\vec{a}} g
  +
  \frac{\eta g}{D}
  |\nabla_{\vec{a}} E|^2
  \right)
  \exp
  \left(-\frac{E}{D}\right), \\
  \frac{\beta^2_{\vec{a}}}{2}   
  \Delta_{\vec{a}} f
  &=
  \left(
  \eta D \Delta_{\vec{a}} g
  -
  2\eta\nabla_{\vec{a}} E\cdot\nabla_{\vec{a}} g
  -
  \eta g
  \Delta_{\vec{a}} E
  +
  \frac{\eta g}{D}
  |\nabla_{\vec{a}} E|^2
  \right)
  \exp
  \left(-\frac{E}{D}\right).
 \end{split} 
\end{equation}
Thus, using \eqref{eq:3.1}, \eqref{eqex1}-\eqref{eqex2} in the Fokker-Planck equation
\eqref{eq:3.8}, we arrive at the equation for the function $g$,
\begin{equation}
 \label{eq:3.3}
  \frac{\partial g}{\partial t}
  =
  3\gamma D
  \Delta_{\Delta\vec{\alpha}}^{\Omega} g
  +
  \eta D
  \Delta_{\vec{a}} g
  -3\gamma
  \nabla^\Omega_{\Delta\vec{\alpha}}
  E
  \cdot
  \nabla^\Omega_{\Delta\vec{\alpha}}
  g
  -\eta
  \nabla_{\vec{a}}
  E
  \cdot
  \nabla_{\vec{a}}
  g.
\end{equation}
 We also derive the boundary condition for $g$ using expression for
 the boundary conditions \eqref{eq:3.8} for $f$. By direct computation
and the fluctuation-dissipation relation \eqref{eq:2.8} and
\eqref{eq:2.10}, we have that,
\begin{equation}
 \label{eq:3.22}
 \begin{split}
  \frac{\beta^2_{\Delta\vec{\alpha}}}{2}
  \nabla^\Omega_{\Delta\vec{\alpha}}f
  -
  \vec{v}_{\Delta\vec{\alpha}}f
  &=
  \left(
  \frac{\beta^2_{\Delta\vec{\alpha}}}{2}
  \nabla^\Omega_{\Delta\vec{\alpha}}g
  -
  \frac{\beta^2_{\Delta\vec{\alpha}}}{2D}
  g
  \nabla^\Omega_{\Delta\vec{\alpha}}E
  +3\gamma g\nabla^\Omega_{\Delta\vec{\alpha}}E
  \right)
  \exp\left(-\frac{E}{D}\right)
  \\
  &=3\gamma D
  \exp
  \left(-\frac{E}{D}\right)
  \nabla^\Omega_{\Delta\vec{\alpha}}g, \\
  \frac{\beta^2_{\vec{a}}}{2}
  \nabla_{\vec{a}}f
  -
  \vec{v}_{\vec{a}}f
  &=
  \left(
  \frac{\beta^2_{\vec{a}}}{2}
  \nabla_{\vec{a}}g
  -
  \frac{\beta^2_{\vec{a}}}{2D}
  g
  \nabla_{\vec{a}}E
  +\eta g\nabla_{\vec{a}}E
  \right)
  \exp\left(-\frac{E}{D}\right)
  \\
  &=
  \eta D
  \exp
  \left(-\frac{E}{D}\right)
  \nabla_{\vec{a}}g.
 \end{split}
\end{equation}
Thus,  the natural boundary conditions for $f$ is transformed into the Neumann
boundary conditions for $g$.  To study \eqref{eq:3.3}, we introduce
a differential operator,
\begin{equation}
 \label{eq:3.4}
  Lg:=
  3\gamma D
  \Delta_{\Delta\vec{\alpha}}^{\Omega}g
  +
  \eta D
  \Delta_{\vec{a}}g
  -3\gamma
  \nabla^\Omega_{\Delta\vec{\alpha}} 
  E
  \cdot
  \nabla^\Omega_{\Delta\vec{\alpha}} g
  -\eta
  \nabla_{\vec{a}}
  E
  \cdot
  \nabla_{\vec{a}} g
\end{equation}
subject to the Neumann boundary conditions, 
\begin{equation}
 \label{eq:3.25}
\begin{split}
 \nabla^\Omega_{\Delta\vec{\alpha}} g\cdot\vec{\nu}_{\Delta\vec{\alpha}}=0,\qquad
  \text{on}\ 
  \partial\Omega\times\Omega_{\mathrm{TJ}}, \\
  \nabla_{\vec{a}}g\cdot\vec{\nu}_{\vec{a}}=0, \qquad
  \text{on}\ 
  \Omega\times\partial\Omega_{\mathrm{TJ}}, \\
\end{split}
\end{equation}

 We will use the Lax-Milgram theorem below to show
that $L$ is a self-adjoint operator, (cf. \cite[Theorem
5.8]{MR1814364}). For the reader's convenience, we will state theorem
below.
\begin{lemma}
 [Lax-Milgram] \label{lem:3.2}
 Let $\mathbf{B}$ be a bounded, coercive bilinear form on a Hilbert
 space $H$. Then for every bounded linear functional $F\in H^*$, there
 exists a unique element $f\in H$ such that $\mathbf{B}(x,f) = F(x)$ for
 all $x\in H$.
\end{lemma}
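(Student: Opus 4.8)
The plan is to deduce the statement from the Riesz representation theorem together with a few elementary facts about the linear operator that $\mathbf{B}$ induces on $H$. I will treat the general (not necessarily symmetric) case, since symmetry of $\mathbf{B}$ is not assumed in the statement; for the symmetric forms that arise later in the paper one could alternatively run a variational/minimization argument, but that is not needed here.

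First I would fix $f\in H$ and note that $x\mapsto\mathbf{B}(x,f)$ is a bounded linear functional on $H$, by boundedness of $\mathbf{B}$. The Riesz representation theorem then yields a unique element $Af\in H$ with $\mathbf{B}(x,f)=\langle x,Af\rangle$ for all $x\in H$. Uniqueness of Riesz representatives, combined with bilinearity of $\mathbf{B}$, makes $f\mapsto Af$ linear, and $\|Af\|\le C\|f\|$, where $C$ is the boundedness constant of $\mathbf{B}$; thus $A$ is a bounded linear operator on $H$.

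Next I would exploit coercivity. Choosing $x=f$ gives $c\|f\|^{2}\le\mathbf{B}(f,f)=\langle f,Af\rangle\le\|f\|\,\|Af\|$, hence $\|Af\|\ge c\|f\|$ for every $f\in H$. This lower bound shows at once that $A$ is injective and that $\Ran(A)$ is closed: if $(Af_{n})$ is Cauchy then so is $(f_{n})$, it converges to some $f$, and continuity of $A$ gives the limit in $\Ran(A)$. To see $\Ran(A)$ is dense, take $z$ orthogonal to $\Ran(A)$; then $c\|z\|^{2}\le\langle z,Az\rangle=0$, so $z=0$. A closed and dense subspace of a Hilbert space is the whole space, so $A$ is a bounded bijection of $H$ with $\|A^{-1}\|\le 1/c$.

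Finally, given $F\in H^{*}$, Riesz representation provides a unique $w\in H$ with $F(x)=\langle x,w\rangle$ for all $x\in H$. Then $f:=A^{-1}w$ satisfies $\mathbf{B}(x,f)=\langle x,Af\rangle=\langle x,w\rangle=F(x)$ for all $x\in H$, and conversely any solution forces $Af=w$, giving uniqueness. The only step that is not pure bookkeeping is the closedness/density argument establishing surjectivity of $A$, and even that is routine once coercivity has been recast as the coercivity estimate $\|Af\|\ge c\|f\|$. (An alternative that sidesteps it is a contraction-mapping argument: for $\rho\in(0,2c/C^{2})$ the map $g\mapsto g-\rho(Ag-w)$ is a strict contraction on $H$, and its fixed point is the desired $f$; but the operator-range argument seems cleanest to record.)
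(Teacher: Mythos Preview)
Your proof is correct and is the standard argument via the Riesz representation theorem and the induced operator $A$. The paper, however, does not supply a proof of this lemma at all: it merely states the Lax--Milgram theorem, citing Gilbarg--Trudinger \cite[Theorem 5.8]{MR1814364}, and then applies it to the bilinear form $\langle -Lg+g,\varphi\rangle$ on $H^1(\Omega\times\Omega_{\mathrm{TJ}},d{\it m})$. So there is nothing to compare; you have correctly reproduced a standard textbook proof of a result the paper only quotes.
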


Now, we proceed to show that $L$ is a self-adjoint operator on the weighted $L^2$
spaces,
\begin{equation}
 L^2(\Omega\times\Omega_{\mathrm{TJ}},\,d{\it m}),
  \quad
  d{\it m}=e^{-\frac{E}{D}}\,d\Delta\vec{\alpha}d\vec{a}.
\end{equation}

\begin{proposition}
 Let $d{\it m}=e^{-\frac{E}{D}}\,d\Delta\vec{\alpha}d\vec{a}$ and let $L$ be
 defined by \eqref{eq:3.4} on
 $L^2(\Omega\times\Omega_{\mathrm{TJ}},d{\it m})$ with a domain $ D(L) :=
 H^2(\Omega\times\Omega_{\mathrm{TJ}},d{\it m})$, and with the Neumann
 boundary conditions \eqref{eq:3.25}. Then, $L$ is a self-adjoint
 operator on $L^2(\Omega\times\Omega_{\mathrm{TJ}},d{\it m})$.
\end{proposition}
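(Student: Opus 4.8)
The plan is to realize $-L$ as the nonnegative, self-adjoint operator canonically attached to a closed symmetric bilinear form on the weighted space $L^2(\Omega\times\Omega_{\mathrm{TJ}},d{\it m})$, using the Lax--Milgram theorem (Lemma~\ref{lem:3.2}) to invert $I-L$ and then invoking the standard range criterion for self-adjointness of a semibounded symmetric operator.

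First I would put $L$ into Sturm--Liouville (divergence) form relative to $d{\it m}=e^{-E/D}\,d\Delta\vec{\alpha}d\vec{a}$. Using $e^{E/D}\nabla\cdot(e^{-E/D}\nabla g)=\Delta g-\frac1D\nabla E\cdot\nabla g$ in each group of variables, definition \eqref{eq:3.4} becomes
\[
 Lg=e^{E/D}\Big(3\gamma D\,\nabla^\Omega_{\Delta\vec{\alpha}}\!\cdot\!\big(e^{-E/D}\nabla^\Omega_{\Delta\vec{\alpha}}g\big)+\eta D\,\nabla_{\vec{a}}\!\cdot\!\big(e^{-E/D}\nabla_{\vec{a}}g\big)\Big).
\]
Hence, for $g\in D(L)=H^2(\Omega\times\Omega_{\mathrm{TJ}},d{\it m})$ satisfying the Neumann conditions \eqref{eq:3.25} and any $\psi\in H^1(\Omega\times\Omega_{\mathrm{TJ}},d{\it m})$, integration by parts (the boundary term vanishing by \eqref{eq:3.25}) gives $\langle Lg,\psi\rangle_{d{\it m}}=-\mathbf{B}(g,\psi)$, with
\[
 \mathbf{B}(g,\psi):=\iint_{\Omega\times\Omega_{\mathrm{TJ}}}\big(3\gamma D\,\nabla^\Omega_{\Delta\vec{\alpha}}g\cdot\nabla^\Omega_{\Delta\vec{\alpha}}\psi+\eta D\,\nabla_{\vec{a}}g\cdot\nabla_{\vec{a}}\psi\big)\,d{\it m}.
\]
Symmetry of $\mathbf{B}$ then yields $\langle Lg,\psi\rangle_{d{\it m}}=\langle g,L\psi\rangle_{d{\it m}}$ for all $g,\psi\in D(L)$, so $L$ is symmetric; $\langle -Lg,g\rangle_{d{\it m}}=\mathbf{B}(g,g)\ge0$, so $-L$ is nonnegative; and $D(L)\supset C^\infty_c(\Omega\times\Omega_{\mathrm{TJ}})$ is dense in $L^2(\Omega\times\Omega_{\mathrm{TJ}},d{\it m})$.

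Next I would show $\Ran(I-L)=L^2(\Omega\times\Omega_{\mathrm{TJ}},d{\it m})$. Since $E$ is continuous on the compact set $\overline{\Omega\times\Omega_{\mathrm{TJ}}}$ it is bounded, so $d{\it m}$ is comparable to Lebesgue measure and $H^1(\Omega\times\Omega_{\mathrm{TJ}},d{\it m})$ coincides, with equivalent norm, with the usual $H^1$; moreover $\nabla E$ is bounded, so $E$ is Lipschitz and the weight $e^{-E/D}$ lies in $W^{1,\infty}$ and is bounded below by a positive constant. On $H:=H^1(\Omega\times\Omega_{\mathrm{TJ}},d{\it m})$ set $\mathbf{B}_1(g,\psi):=\langle g,\psi\rangle_{d{\it m}}+\mathbf{B}(g,\psi)$; this form is bounded and coercive, since $\mathbf{B}_1(g,g)\ge\min\{1,3\gamma D,\eta D\}\,\|g\|^2_{H^1(d{\it m})}$. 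For $h\in L^2(\Omega\times\Omega_{\mathrm{TJ}},d{\it m})$ the functional $\psi\mapsto\langle h,\psi\rangle_{d{\it m}}$ is bounded on $H$, so Lemma~\ref{lem:3.2} provides a unique $g\in H$ with $\mathbf{B}_1(g,\psi)=\langle h,\psi\rangle_{d{\it m}}$ for all $\psi\in H$, i.e. $g$ is the weak solution of $g-Lg=h$ with the conormal (Neumann) boundary condition. Elliptic regularity then upgrades $g$ to $H^2$: the equation is $-\nabla\cdot(e^{-E/D}\mathcal{A}\nabla g)+e^{-E/D}g=e^{-E/D}h$ with $\mathcal{A}$ the constant, block-diagonal, positive-definite matrix ($3\gamma D$ times the identity on the tangent plane of $\Omega$, $\eta D$ times the identity in $\vec{a}$); the leading coefficient $e^{-E/D}\mathcal{A}$ is Lipschitz and uniformly elliptic, so interior estimates give $g\in H^2_{\mathrm{loc}}$, the $C^2$ portion of the boundary coming from $\partial\Omega_{\mathrm{TJ}}$ is handled by standard boundary estimates, and along the edges and corners contributed by $\Omega$ one uses that $\Omega$ — the intersection of an open cube with a plane — is a \emph{convex} polygon, for which the Neumann problem enjoys $H^2$ regularity. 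Because $\mathcal{A}$ is block-diagonal, the conormal condition reduces exactly to \eqref{eq:3.25}, so $g\in D(L)$ and $(I-L)g=h$ a.e. Finally, a densely defined symmetric operator $A\ge0$ with $\Ran(I+A)=H$ is self-adjoint — $(I+A)^{-1}$ is then a bounded, everywhere-defined symmetric (hence self-adjoint) operator, so $A$ is — and applying this to $A=-L$ proves the proposition.

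The main obstacle is this last boundary elliptic regularity step, used to return from the weak $H^1$ solution into $D(L)=H^2$: the domain $\Omega\times\Omega_{\mathrm{TJ}}$ is not of class $C^2$ — it is a product of a convex polygon and a $C^2$ planar domain, so it has edges and corners — and the data $E$ carries only $C^1$ regularity, so $e^{-E/D}$ is merely Lipschitz and the drift $\nabla E$ is merely bounded (and possibly discontinuous at the points $\vec{x}^{(j)}$). One must therefore combine tangential difference-quotient estimates with the convexity of $\Omega$ to rule out corner singularities that would obstruct $H^2$ membership, rather than appeal to a smooth-domain black box. An alternative that avoids part of this is to define $-L$ from the outset as the operator associated with the closed form $\mathbf{B}$ via the form representation theorem; the content one still has to supply is then precisely the identification of its domain with $H^2(\Omega\times\Omega_{\mathrm{TJ}},d{\it m})$ together with \eqref{eq:3.25}.
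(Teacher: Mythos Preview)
Your proposal is correct and follows essentially the same approach as the paper: put $L$ in divergence form relative to $d{\it m}$, integrate by parts to obtain the symmetric bilinear form, apply Lax--Milgram to solve $(-L+I)g=F$, and then invoke elliptic regularity to land back in $H^2$. The paper's proof is in fact less careful than yours on the regularity step --- it simply cites Gilbarg--Trudinger without discussing the corners of $\Omega$ or the mere Lipschitz regularity of $e^{-E/D}$ --- so your explicit identification of these issues and the appeal to convexity of $\Omega$ for Neumann $H^2$ regularity is a genuine improvement in rigor over the original.
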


\begin{proof}
 For $g_1$, $g_2\in D(L)$, we have by the definition of $L$, that, 
 \begin{equation}
  \label{eq:3.31}
  \begin{split}
   &\quad
   (Lg_1,g_2)_{L^2(\Omega\times\Omega_{\mathrm{TJ}},d{\it m})} \\
   &=
   3\gamma D
   \iint_{\Omega\times\Omega_{\mathrm{TJ}}}
   \Delta^\Omega_{\Delta\vec{\alpha}} g_1
   g_2\exp\left(-\frac{E}{D}\right)
   \,d\Delta\vec{\alpha}d\vec{a}
   +
   \eta D
   \iint_{\Omega\times\Omega_{\mathrm{TJ}}}
   \Delta_{\vec{a}} g_1
   g_2
   \exp\left(-\frac{E}{D}\right)
   \,d\Delta\vec{\alpha}d\vec{a}
   \\
   &\quad
   -
   3\gamma
   \iint_{\Omega\times\Omega_{\mathrm{TJ}}}
   \nabla^\Omega_{\Delta\vec{\alpha}} E
   \cdot
   \nabla^\Omega_{\Delta\vec{\alpha}} g_1
   g_2\exp\left(-\frac{E}{D}\right)
   \,d\Delta\vec{\alpha}d\vec{a} \\
   &\quad
   -
   \eta 
   \iint_{\Omega\times\Omega_{\mathrm{TJ}}}
   \nabla_{\vec{a}} E
   \cdot
   \nabla_{\vec{a}} g_1
   g_2
   \exp\left(-\frac{E}{D}\right)
   \,d\Delta\vec{\alpha}d\vec{a}.
  \end{split}
 \end{equation}
 Using the integration by parts for the first and the second terms on the
 right hand side of \eqref{eq:3.31}, we obtain, 
 \begin{equation}
    \label{eq:3.31a}
  \begin{split}
   \iint_{\Omega\times\Omega_{\mathrm{TJ}}}
   \Delta^\Omega_{\Delta\vec{\alpha}} g_1
   g_2\exp\left(-\frac{E}{D}\right)
   \,d\Delta\vec{\alpha}d\vec{a}
   &=
   -
   \iint_{\Omega\times\Omega_{\mathrm{TJ}}}
   \nabla^\Omega_{\Delta\vec{\alpha}} g_1
   \cdot
   \nabla^\Omega_{\Delta\vec{\alpha}}
   \left(
   g_2\exp\left(-\frac{E}{D}\right)
   \right)
   \,d\Delta\vec{\alpha}d\vec{a}, \mbox{ and }\\
   \iint_{\Omega\times\Omega_{\mathrm{TJ}}}
   \Delta_{\vec{a}} g_1
   g_2
   \exp\left(-\frac{E}{D}\right)
   \,d\Delta\vec{\alpha}d\vec{a}
   &=
   -
   \iint_{\Omega\times\Omega_{\mathrm{TJ}}}
   \nabla_{\vec{a}} g_1
   \cdot
   \nabla_{\vec{a}}
   \left(
   g_2
   \exp\left(-\frac{E}{D}\right)
   \right)
   \,d\Delta\vec{\alpha}d\vec{a}.
  \end{split}
 \end{equation}
 Since,
 \begin{equation*}
  \begin{split}
   \nabla^\Omega_{\Delta\vec{\alpha}}
   \left(
   g_2\exp\left(-\frac{E}{D}\right)
   \right)
   &=
   \exp\left(-\frac{E}{D}\right)
   \nabla^\Omega_{\Delta\vec{\alpha}}
   g_2
   -
   \frac{g_2}{D}
   \exp\left(-\frac{E}{D}\right)
   \nabla^\Omega_{\Delta\vec{\alpha}}E, \mbox{ and }\\
   \nabla_{\vec{a}}
   \left(
   g_2\exp\left(-\frac{E}{D}\right)
   \right)
   &=
   \exp\left(-\frac{E}{D}\right)
   \nabla_{\vec{a}}
   g_2
   -
   \frac{g_2}{D}
   \exp\left(-\frac{E}{D}\right)
   \nabla_{\vec{a}}E, 
  \end{split} 
 \end{equation*}
 the integrals of $\nabla^\Omega_{\Delta\vec{\alpha}} E \cdot
 \nabla^\Omega_{\Delta\vec{\alpha}} g_1 g_2\exp\left(-\frac{E}{D}\right)
 $ and $\nabla_{\vec{a}} E \cdot \nabla_{\vec{a}} g_1 g_2
 \exp\left(-\frac{E}{D}\right)$ cancel out. Thus, by the definition of
 $d{\it m}$,
 \begin{equation}
  \label{eq:3.21}
  \begin{split}
   &\quad
   (Lg_1,g_2)_{L^2(\Omega\times\Omega_{\mathrm{TJ}},d{\it m})} \\
   &=
   -
   3\gamma D
   \iint_{\Omega\times\Omega_{\mathrm{TJ}}}
   \nabla^\Omega_{\Delta\vec{\alpha}} g_1
   \cdot
   \nabla^\Omega_{\Delta\vec{\alpha}}
   g_2
   \exp\left(-\frac{E}{D}\right)
   \,d\Delta\vec{\alpha}d\vec{a} \\
   &\quad
   -
   \eta D
   \iint_{\Omega\times\Omega_{\mathrm{TJ}}}
   \nabla_{\vec{a}} g_1
   \cdot
   \nabla_{\vec{a}}
   g_2
   \exp\left(-\frac{E}{D}\right)
   \,d\Delta\vec{\alpha}d\vec{a}
   \\
   &=
    -3\gamma D
   \iint_{\Omega\times\Omega_{\mathrm{TJ}}}
   \nabla^\Omega_{\Delta\vec{\alpha}} g_1
   \cdot
   \nabla^\Omega_{\Delta\vec{\alpha}} g_2\,d{\it m}
   -
   \eta D
   \iint_{\Omega\times\Omega_{\mathrm{TJ}}}
   \nabla_{\vec{a}} g_1
   \cdot
   \nabla_{\vec{a}} g_2\,d{\it m},
  \end{split}
 \end{equation}
 hence $L$ is a dissipative symmetric operator.

 Next, we show that $L$ is maximal operator: for fixed $F\in
 L^2(\Omega\times\Omega_{\mathrm{TJ}},\,d{\it m})$, we show that there is
 $g\in H^2(\Omega\times\Omega_{\mathrm{TJ}},\,d{\it m})$, such that
 $-Lg+g=F$.  
 Let us define for $g,\varphi\in
 H^1(\Omega\times\Omega_{\mathrm{TJ}},d{\it m}),$
 \begin{equation*}
  \langle -Lg+g,\varphi\rangle
   :=
   3\gamma D
   \iint_{\Omega\times\Omega_{\mathrm{TJ}}}
   \nabla^\Omega_{\Delta\vec{\alpha}} g
   \cdot
   \nabla^\Omega_{\Delta\vec{\alpha}} \varphi\,d{\it m}
   +
   \eta D
   \iint_{\Omega\times\Omega_{\mathrm{TJ}}}
   \nabla_{\vec{a}} g
   \cdot
   \nabla_{\vec{a}} \varphi\,d{\it m}
   +
   \iint_{\Omega\times\Omega_{\mathrm{TJ}}}
   g\varphi\,d{\it m}.
 \end{equation*}
 By H\"older's inequality and the definition of the Sobolev spaces,  there is a positive constant $\Cr{const:3.12}>0$, such that for,
 $g,\phi\in H^1(\Omega\times\Omega_{\mathrm{TJ}},d{\it m})$, we have that,
 \begin{equation*}
  \begin{split}
   |\langle -Lg+g,\varphi\rangle|
   &\leq
   3\gamma D
   \|\nabla^\Omega_{\Delta\vec{\alpha}}
   g\|_{L^2(\Omega\times\Omega_{\mathrm{TJ}},d{\it m})}
   \|\nabla^\Omega_{\Delta\vec{\alpha}}
   \varphi\|_{L^2(\Omega\times\Omega_{\mathrm{TJ}},d{\it m})} \\
   &\quad
   +
   \eta D
   \|\nabla_{\vec{a}} g\|_{L^2(\Omega\times\Omega_{\mathrm{TJ}},d{\it m})}
   \|\nabla_{\vec{a}}
   \phi\|_{L^2(\Omega\times\Omega_{\mathrm{TJ}},d{\it m})} \\
   &\quad
   +
   \|g\|_{L^2(\Omega\times\Omega_{\mathrm{TJ}},d{\it m} )}
   \|\phi\|_{L^2(\Omega\times\Omega_{\mathrm{TJ}},d{\it m})} \\
   &\leq
   \Cl{const:3.12}
   \|g\|_{H^1(\Omega\times\Omega_{\mathrm{TJ}},d{\it m})}
   \|\varphi\|_{H^1(\Omega\times\Omega_{\mathrm{TJ}},d{\it m)}}.
  \end{split} 
 \end{equation*}
Thus,  $\langle -Lg+g,\varphi\rangle$ is a bounded bilinear form in
 $H^1(\Omega\times\Omega_{\mathrm{TJ}},d{\it m})$. Also, for $g\in
 H^1(\Omega\times\Omega_{\mathrm{TJ}},d{\it m})$, we can obtain,
 \begin{equation*}
  \begin{split}
   \langle -Lg+g,g\rangle
  &=
  3\gamma D
  \|\nabla^\Omega_{\Delta\vec{\alpha}}
  g\|_{L^2(\Omega\times\Omega_{\mathrm{TJ}},d{\it m})}^2
  +
  \eta D
  \|\nabla_{\vec{a}} g\|_{L^2(\Omega\times\Omega_{\mathrm{TJ}},d{\it m)}}^2
  +
  \|g\|^2_{L^2(\Omega\times\Omega_{\mathrm{TJ}},d{\it m)}} \\
  &\geq
  \min\{3\gamma D,\eta D,1\}
   \|g\|^2_{H^1(\Omega\times\Omega_{\mathrm{TJ}},d{\it m})},
  \end{split}
 \end{equation*}
 which shows that $\langle -Lg+g,\varphi\rangle$ is a coercive bilinear
 form. In addition, $F$ can be regarded as a bounded linear functional in
 $H^1(\Omega\times\Omega_{\mathrm{TJ}},d{\it m})$, because for $\varphi\in
 H^1(\Omega\times\Omega_{\mathrm{TJ}},d{\it m})$, we have,
 \begin{equation*}
  |(F,\phi)_{L^2(\Omega\times\Omega_{\mathrm{TJ}},d{\it m})}|
   \leq
   \|F\|_{L^2(\Omega\times\Omega_{\mathrm{TJ}},d{\it m})}
   \|\varphi\|_{L^2(\Omega\times\Omega_{\mathrm{TJ}},d{\it m})}
   \leq
   \|F\|_{L^2(\Omega\times\Omega_{\mathrm{TJ}},d{\it m})}
   \|\varphi\|_{H^1(\Omega\times\Omega_{\mathrm{TJ}},d{\it m})}
 \end{equation*}
 by H\"older's inequality. Thus,  
 by the Lax-Milgram theorem, there exists $g\in
 H^1(\Omega\times\Omega_{\mathrm{TJ}},d{\it m})$ such that,
 \begin{equation*}
 \langle -Lg+g,\varphi\rangle
  =
  (F,\varphi)_{L^2(\Omega\times\Omega_{\mathrm{TJ}},\,d{\it m})}
 \end{equation*}
 for $\varphi\in H^1(\Omega\times\Omega_{\mathrm{TJ}},\,d{\it m})$. 
 Next, for
 arbitrary $\phi\in H^1(\Omega\times\Omega_{\mathrm{TJ}})$, take
 $\varphi=\phi\exp(\frac{E}{D})\in
 H^1(\Omega\times\Omega_{\mathrm{TJ}},\,d{\it m})$. Then, we find that $g$
 is a weak solution of $-Lg+g=F$ with the Neumann boundary condition
 $\nabla^\Omega_{\Delta\vec{\alpha}} g\cdot\vec{\nu}_{\Delta\vec{\alpha}}|_{\partial\Omega\times\Omega_{\mathrm{TJ}}}=0$,  and $\nabla_{\vec{a}}g\cdot\vec{\nu}_{\vec{a}}|_{\Omega\times\partial\Omega_{\mathrm{TJ}}}=0$. In a similar manner as
 for \cite{MR1814364}, we have $g\in
 H^2(\Omega\times\Omega_{\mathrm{TJ}})$. Since $\exp(-\frac{E}{D})$ is
 bounded, $g$ belongs to $H^2(\Omega\times\Omega_{\mathrm{TJ}},d{\it m})$.
\end{proof}

By the semigroup theory (cf. \cite{MR2759829}), for any $g_0\in
L^2(\Omega\times\Omega_{\mathrm{TJ}},d{\it m})$, there uniquely exists
$g\in C([0,\infty);L^2(\Omega\times\Omega_{\mathrm{TJ}},d{\it m}))
\cap C^1((0,\infty);L^2(\Omega\times\Omega_{\mathrm{TJ}},d{\it m}))
\cap C((0,\infty);H^2(\Omega\times\Omega_{\mathrm{TJ}},d{\it m}))$
such that,
\begin{equation}
 \label{eq:3.13}
 \left\{
 \begin{aligned}
  g_t
  &=
  Lg,\quad t>0, \\
  g(0)
  &=
  g_0.
 \end{aligned}
 \right.
\end{equation}
Furthermore $g$ belongs to $C^k((0,\infty);D(L^l))$ for any positive
integer $k,l$. Using the existence of a solution of \eqref{eq:3.13}, one
can obtain existence of a weak solution of \eqref{eq:3.8}.

\begin{proposition}
 Let $f_0\in L^2(\Omega\times\Omega_{\mathrm{TJ}})$. Assume that
 $\sigma$ is a $C^1$ function on $\R$. Then, there exists a weak solution
 $f$ of \eqref{eq:3.8}.
\end{proposition}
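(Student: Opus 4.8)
The plan is to obtain $f$ by undoing the change of variables \eqref{eq:3.1}, i.e.\ by pushing the semigroup solution of \eqref{eq:3.13} back through $f=g\exp(-E/D)$. First I would set $g_0:=f_0\exp(E/D)$. Since $\sigma$ is continuous on $\R$ and both $\Omega$ and $\Omega_{\mathrm{TJ}}$ are bounded, the energy $E(\Delta\vec{\alpha},\vec{a})=\sum_{j=1}^3\sigma(\Delta^{(j)}\alpha)|\vec{a}-\vec{x}^{(j)}|$ is bounded on $\overline{\Omega\times\Omega_{\mathrm{TJ}}}$, so $\exp(\pm E/D)$ is bounded above and below by positive constants; hence $g_0\in L^2(\Omega\times\Omega_{\mathrm{TJ}},d{\it m})$ with $\|g_0\|_{L^2(d{\it m})}^2=\iint_{\Omega\times\Omega_{\mathrm{TJ}}}|f_0|^2\exp(E/D)\,d\Delta\vec{\alpha}d\vec{a}<\infty$. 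Applying the semigroup result recorded around \eqref{eq:3.13} to this $g_0$ yields a unique $g\in C([0,\infty);L^2(d{\it m}))\cap C^1((0,\infty);L^2(d{\it m}))\cap C((0,\infty);H^2(d{\it m}))$ with $g_t=Lg$ for $t>0$, $g(0)=g_0$, and the Neumann conditions \eqref{eq:3.25}. I then \emph{define} $f:=g\exp(-E/D)$ and claim it is a weak solution of \eqref{eq:3.8}.

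Next I would verify the regularity demanded by the definition of a weak solution. Testing $g_t=Lg$ against $g$ and using the symmetric dissipative identity \eqref{eq:3.21} gives, for $t>0$,
\[
\tfrac{d}{dt}\|g(t)\|_{L^2(d{\it m})}^2=-6\gamma D\,\|\nabla^\Omega_{\Delta\vec{\alpha}}g\|_{L^2(d{\it m})}^2-2\eta D\,\|\nabla_{\vec{a}}g\|_{L^2(d{\it m})}^2\le 0,
\]
so $\|g(t)\|_{L^2(d{\it m})}\le\|g_0\|_{L^2(d{\it m})}$ for all $t$ and $\int_0^T(\|\nabla^\Omega_{\Delta\vec{\alpha}}g\|_{L^2(d{\it m})}^2+\|\nabla_{\vec{a}}g\|_{L^2(d{\it m})}^2)\,dt\le C\|g_0\|_{L^2(d{\it m})}^2$ for every $T$. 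Boundedness of $\exp(-E/D)$ then gives $f\in L^\infty(0,\infty;L^2(\Omega\times\Omega_{\mathrm{TJ}}))$. For the gradients, differentiate $f=g\exp(-E/D)$ to get $\nabla^\Omega_{\Delta\vec{\alpha}}f=\exp(-\tfrac{E}{D})(\nabla^\Omega_{\Delta\vec{\alpha}}g-\tfrac{g}{D}\nabla^\Omega_{\Delta\vec{\alpha}}E)$ and similarly for $\nabla_{\vec{a}}f$. Here one uses that, although $E$ need not lie in $H^2$, its \emph{first} derivatives are bounded: $|\nabla_{\vec{a}}E|=\bigl|\sum_{j}\sigma(\Delta^{(j)}\alpha)\tfrac{\vec{a}-\vec{x}^{(j)}}{|\vec{a}-\vec{x}^{(j)}|}\bigr|\le\sum_{j}|\sigma(\Delta^{(j)}\alpha)|$, and $\nabla^\Omega_{\Delta\vec{\alpha}}E$ is a fixed linear combination of the bounded quantities $\sigma_\theta(\Delta^{(j)}\alpha)|\vec{a}-\vec{x}^{(j)}|$, both bounded because $\sigma\in C^1$ and the domains are bounded. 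Consequently $\|\nabla f(t)\|_{L^2}^2\lesssim\|\nabla g(t)\|_{L^2(d{\it m})}^2+\|g(t)\|_{L^2(d{\it m})}^2$, which is integrable on each finite time interval, giving the space--time integrability of $\nabla^\Omega_{\Delta\vec{\alpha}}f$ and $\nabla_{\vec{a}}f$ needed to make sense of \eqref{eq:3.9}; the same bounds show $\vec{v}_{\Delta\vec{\alpha}}f=-3\gamma(\nabla^\Omega_{\Delta\vec{\alpha}}E)f$ and $\vec{v}_{\vec{a}}f=-\eta(\nabla_{\vec{a}}E)f$ lie in $L^2$.

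Then I would check the identity \eqref{eq:3.9}. The computations \eqref{eqex1}--\eqref{eqex2} already express, term by term, that $f=g\exp(-E/D)$ with $g$ solving \eqref{eq:3.3} satisfies \eqref{eq:3.8} in the distributional sense for $t>0$. Fixing $\phi\in C^\infty(\overline{\Omega\times\Omega_{\mathrm{TJ}}\times[0,\infty)})$, for each $t>0$ I multiply the equation by $\phi$, integrate over $\Omega\times\Omega_{\mathrm{TJ}}$, and integrate by parts in space; by \eqref{eq:3.22} the two fluxes equal $3\gamma D\exp(-E/D)\nabla^\Omega_{\Delta\vec{\alpha}}g$ and $\eta D\exp(-E/D)\nabla_{\vec{a}}g$, whose normal traces vanish on $\partial\Omega\times\Omega_{\mathrm{TJ}}$ and on $\Omega\times\partial\Omega_{\mathrm{TJ}}$ respectively by the Neumann conditions \eqref{eq:3.25} for $g\in D(L)$ (the traces being meaningful since $g(\cdot,t)\in H^2$ and $\Omega_{\mathrm{TJ}}$ has $C^2$ boundary), so the boundary integral drops out. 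Integrating the resulting identity in time from $\varepsilon$ to $T$, moving the time derivative onto $\phi$ (which produces the endpoint terms $\iint f\phi|_{t=T}$, $\iint f\phi|_{t=\varepsilon}$ together with $-\int_\varepsilon^T\iint f\phi_t$), and letting $\varepsilon\downarrow0$ using $f\in C([0,\infty);L^2)$ with $f(0)=g_0\exp(-E/D)=f_0$, yields exactly \eqref{eq:3.9} for a.e.\ $T>0$, and the initial condition in \eqref{eq:3.8} holds by the same identification.

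The main obstacle I expect is the low regularity of $E$: since $E\notin H^2(\Omega\times\Omega_{\mathrm{TJ}})$ in general, $f$ need not be smooth, so every step must be carried out at the level of the weak formulation rather than by classical differentiation. This is precisely what the substitution \eqref{eq:3.1} buys -- the operator $L$ in \eqref{eq:3.4} has \emph{bounded} first-order coefficients $\nabla E$ and $L^2(\cdot,d{\it m})$ is the space in which \eqref{eq:3.21} is a clean dissipation identity -- and it is why \eqref{eq:3.9} is written without any second derivatives of $f$. A secondary point is that $\Omega$ is only a Lipschitz (polygonal) domain, so the $H^2$-regularity of $g$ and the trace estimates used to discard the boundary terms must be imported from the preceding proposition rather than reproved here.
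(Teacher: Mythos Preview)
Your proof is correct and follows essentially the same route as the paper: define $g_0=f_0\exp(E/D)\in L^2(d{\it m})$, invoke the semigroup solution $g$ of \eqref{eq:3.13}, set $f=g\exp(-E/D)$, and verify \eqref{eq:3.9} by exploiting the flux identity \eqref{eq:3.22} together with the Neumann conditions \eqref{eq:3.25} on $g$. The paper organizes the last step slightly differently---it first writes the weak formulation for $g$ (their \eqref{eq:3.23}) and then converts it algebraically via \eqref{eq:3.33}--\eqref{eq:3.34} into \eqref{eq:3.9}---but this is the same computation as yours, since both amount to the product-rule identity $e^{-E/D}\nabla^\Omega_{\Delta\vec{\alpha}}g=\tfrac{\beta^2_{\Delta\vec{\alpha}}}{2}\nabla^\Omega_{\Delta\vec{\alpha}}f-\vec{v}_{\Delta\vec{\alpha}}f$ (and the $\vec{a}$ analogue). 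If anything, you are more explicit than the paper about checking the regularity clauses in the definition of weak solution (the $L^\infty_tL^2_x$ bound on $f$ and the $L^2_tL^2_x$ bound on $\nabla f$ via the energy identity and boundedness of $\nabla E$), which the paper leaves implicit.
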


\begin{proof}
 Let $g_0=f_0\exp(\frac{E}{D})$. Then $g_0\in
 L^2(\Omega\times\Omega_{\mathrm{TJ}},d{\it m})$ hence there is a solution
 $g\in C([0,\infty);L^2(\Omega\times\Omega_{\mathrm{TJ}},d{\it m})) \cap
 C^1((0,\infty);L^2(\Omega\times\Omega_{\mathrm{TJ}},d{\it m})) \cap
 C((0,\infty);H^2(\Omega\times\Omega_{\mathrm{TJ}},d{\it m}))$ to
 \eqref{eq:3.13}. Then by \eqref{eq:3.21}, for any $\phi\in
 C^\infty(\overline{\Omega\times\Omega_{\mathrm{TJ}}\times[0,\infty)})$
 and almost every $T>0$, we have that,
 \begin{equation}
  \label{eq:3.23}
  \begin{split}
   &\quad
   \iint_{\Omega\times\Omega_{\mathrm{TJ}}}
   g\phi
   \exp
   \left(-\frac{E}{D}\right)
   \,d\Delta\vec{\alpha}d\vec{a}
   \bigg|_{t=T}
   -
   \int_0^T\,dt
   \iint_{\Omega\times\Omega_{\mathrm{TJ}}}
   g\phi_t
   \exp
   \left(-\frac{E}{D}\right)
   \,d\Delta\vec{\alpha}d\vec{a}
   \\
   &\quad
   +
   \int_0^T\,dt
   \iint_{\Omega\times\Omega_{\mathrm{TJ}}}
   \left(
   3\gamma D
   \nabla^\Omega_{\Delta\vec{\alpha}} g
   \cdot
   \nabla^\Omega_{\Delta\vec{\alpha}}\phi
   +
   \eta D
   \nabla_{\vec{a}}g
   \cdot
   \nabla_{\vec{a}}\phi
   \right)
   \exp
   \left(-\frac{E}{D}\right)
   \,d\Delta\vec{\alpha}d\vec{a} \\
   &=
   \iint_{\Omega\times\Omega_{\mathrm{TJ}}}
   g_0\phi
   \exp
   \left(-\frac{E}{D}\right)
   \,d\Delta\vec{\alpha}d\vec{a}
   \bigg|_{t=0}.
  \end{split}
 \end{equation}
 From \eqref{eq:3.31}, \eqref{eq:3.31a}, and \eqref{eq:3.21}
 with $g_1=\phi$, $g_2=g$, we deduce,
 \begin{equation}
  \label{eq:3.33}
   \begin{split}
    &\quad
    \iint_{\Omega\times\Omega_{\mathrm{TJ}}}
    \left(
    3\gamma D
    \nabla^\Omega_{\Delta\vec{\alpha}} g
    \cdot
    \nabla^\Omega_{\Delta\vec{\alpha}}\phi
    +
    \eta D
    \nabla_{\vec{a}}g
    \cdot
    \nabla_{\vec{a}}\phi
    \right)
    \exp
    \left(-\frac{E}{D}\right)
    \,d\Delta\vec{\alpha}d\vec{a} \\
    &=
    -(L\phi,g)_{L^2(\Omega\times\Omega_{\mathrm{TJ}},d{\it m})} \\
    &=
    3\gamma D
    \iint_{\Omega\times\Omega_{\mathrm{TJ}}}
    \nabla^\Omega_{\Delta\vec{\alpha}} \phi
    \cdot
    \nabla^\Omega_{\Delta\vec{\alpha}}
    \left(
    g\exp\left(-\frac{E}{D}\right)
    \right)
    \,d\Delta\vec{\alpha}d\vec{a} \\
    &\quad
    +
    \eta D
    \iint_{\Omega\times\Omega_{\mathrm{TJ}}}
    \nabla_{\vec{a}} \phi
    \cdot
    \nabla_{\vec{a}}
    \left(
    g
    \exp\left(-\frac{E}{D}\right)
    \right)
    \,d\Delta\vec{\alpha}d\vec{a}
    \\
    &\quad
    +
    3\gamma
    \iint_{\Omega\times\Omega_{\mathrm{TJ}}}
    \nabla^\Omega_{\Delta\vec{\alpha}} E
    \cdot
    \nabla^\Omega_{\Delta\vec{\alpha}} \phi
    g\exp\left(-\frac{E}{D}\right)
    \,d\Delta\vec{\alpha}d\vec{a} \\
    &\quad
    +
    \eta 
    \iint_{\Omega\times\Omega_{\mathrm{TJ}}}
    \nabla_{\vec{a}} E
    \cdot
    \nabla_{\vec{a}} \phi
    g
    \exp\left(-\frac{E}{D}\right)
    \,d\Delta\vec{\alpha}d\vec{a}.
   \end{split} 
 \end{equation}
 From the fluctuation-dissipation relation \eqref{eq:2.10}, $3\gamma
 D=\frac{\beta^2_{\Delta\vec{\alpha}}}{2}$ and $\eta
 D=\frac{\beta^2_{\vec{a}}}{2}$, we have that,
\begin{equation}
 \label{eq:3.34}
  \begin{split}
   &\quad
   3\gamma D
   \iint_{\Omega\times\Omega_{\mathrm{TJ}}}
   \nabla^\Omega_{\Delta\vec{\alpha}} \phi
   \cdot
   \nabla^\Omega_{\Delta\vec{\alpha}}
   \left(
   g\exp\left(-\frac{E}{D}\right)
   \right)
   \,d\Delta\vec{\alpha}d\vec{a} \\
   &\quad
   +
   \eta D
   \iint_{\Omega\times\Omega_{\mathrm{TJ}}}
   \nabla_{\vec{a}} \phi
   \cdot
   \nabla_{\vec{a}}
   \left(
   g
   \exp\left(-\frac{E}{D}\right)
   \right)
   \,d\Delta\vec{\alpha}d\vec{a}
   \\
   &\quad
   +
   3\gamma
   \iint_{\Omega\times\Omega_{\mathrm{TJ}}}
   \nabla^\Omega_{\Delta\vec{\alpha}} E
   \cdot
   \nabla^\Omega_{\Delta\vec{\alpha}} \phi
   g\exp\left(-\frac{E}{D}\right)
   \,d\Delta\vec{\alpha}d\vec{a} \\
   &\quad
   +
   \eta 
   \iint_{\Omega\times\Omega_{\mathrm{TJ}}}
   \nabla_{\vec{a}} E
   \cdot
   \nabla_{\vec{a}} \phi
   g
   \exp\left(-\frac{E}{D}\right)
    \,d\Delta\vec{\alpha}d\vec{a} \\
   &=
   \frac{\beta^2_{\Delta\vec{\alpha}}}{2}
   \iint_{\Omega\times\Omega_{\mathrm{TJ}}}
   \nabla^\Omega_{\Delta\vec{\alpha}} \phi
   \cdot
   \nabla^\Omega_{\Delta\vec{\alpha}}
   f
   \,d\Delta\vec{\alpha}d\vec{a}
   +
   \frac{\beta^2_{\vec{a}}}{2}
   \iint_{\Omega\times\Omega_{\mathrm{TJ}}}
   \nabla_{\vec{a}} \phi
   \cdot
   \nabla_{\vec{a}}
   f   
   \,d\Delta\vec{\alpha}d\vec{a} \\
   &\quad
   -
   \iint_{\Omega\times\Omega_{\mathrm{TJ}}}
   \vec{v}_{\Delta\vec{\alpha}}
   \cdot
   \nabla^\Omega_{\Delta\vec{\alpha}} \phi
   f
   \,d\Delta\vec{\alpha}d\vec{a} 
   -
   \iint_{\Omega\times\Omega_{\mathrm{TJ}}}
   \vec{v}_{\vec{a}}
   \cdot
   \nabla_{\vec{a}} \phi
   f    
   \,d\Delta\vec{\alpha}d\vec{a},
  \end{split} 
 \end{equation}
 where we used $\vec{v}_{\Delta\vec{\alpha}} =
 -3\gamma\nabla^\Omega_{\Delta\vec{\alpha}} E$, $\vec{v}_{\vec{a}} =
 -\eta\nabla_{\vec{a}}E$, and $f=g\exp(-E/D)$, \eqref{eq:3.1}. Plugging \eqref{eq:3.33},
 \eqref{eq:3.34}, and $f=g\exp(-E/D)$ again into \eqref{eq:3.23}, we obtain
 that $f$ is a weak solution \eqref{eq:3.9} to \eqref{eq:3.8}.
\end{proof}

\subsection{Exponential decay of $f$}\label{sec7}
We study the long-time asymptotics of the solution $f$ of the
Fokker-Planck equation \eqref{eq:3.8}. In order to derive that $f$
converges to $f_\infty$  \eqref{eq:3.2}, we will show that
$g=f\exp(E/D)$ converges to some constant. Hereafter, we assume the
2-Poincar\'e-Wirtinger inequality on $\Omega\times\Omega_{\mathrm{TJ}}$,
that is, there exists a positive constant $\Cl{const:3.2}>0$ such that
for $g\in C^\infty(\Omega\times\Omega_{\mathrm{TJ}})$,
\begin{equation}
 \label{eq:3.7}
 \iint_{\Omega\times\Omega_{\mathrm{TJ}}}|g-\bar{g}|^2
  \,d\Delta\vec{\alpha}d\vec{a}
  \leq
  \Cr{const:3.2}
  \iint_{\Omega\times\Omega_{\mathrm{TJ}}}
  \left(
   |\nabla^\Omega_{\Delta\vec{\alpha}}g|^2
   +
   |\nabla_{\vec{a}}g|^2
  \right)
  \,d\Delta\vec{\alpha}d\vec{a},
\end{equation}
where
\begin{equation}
 \bar{g}
  =
  \frac{1}{|\Omega\times\Omega_{\mathrm{TJ}}|}
  \iint_{\Omega\times\Omega_{\mathrm{TJ}}}g\,d\Delta\vec{\alpha}d\vec{a}
\end{equation}
is the integral mean on $\Omega\times\Omega_{\mathrm{TJ}}$. For example,
when $\Omega$ is a bounded convex domain, 2-Poincar\'e-Wirtinger
inequality \eqref{eq:3.7} holds~\cite[Lemma
6.12]{MR1465184}.

We now show that $\Omega\times\Omega_{\mathrm{TJ}}$ supports the
2-Poincar\'e-Wirtinger inequality \eqref{eq:3.7} in the weighted $L^2$
space $L^2(\Omega\times\Omega_{\mathrm{TJ}},d{\it m})$.

\begin{lemma}
 \label{lem:3.1}
 There exists $\Cl{const:3.3}>0$ such that for
 $g\in C^\infty(\Omega\times \Omega_{\mathrm{TJ}})$, we have that,
 \begin{equation}
  \label{eq:3.5}
  \|g-\bar{g}_{L^2(\Omega\times\Omega_{\mathrm{TJ}},d{\it
      m})}\|_{L^2(\Omega\times\Omega_{\mathrm{TJ}},d{\it m})}^2
   \leq
   \Cr{const:3.3}
   \iint_{\Omega\times\Omega_{\mathrm{TJ}}}
   \left(
   |\nabla^\Omega_{\Delta\vec{\alpha}}g|^2
   +
   |\nabla_{\vec{a}}g|^2
   \right)
   \,d{\it m},
 \end{equation}
 where a constant $\Cr{const:3.1}$ is defined in \eqref{eq:3.6} and,
 \begin{equation}
  \bar{g}_{L^2(\Omega\times\Omega_{\mathrm{TJ}},d{\it m})}
   =
   \Cr{const:3.1}
   \iint_{\Omega\times\Omega_{\mathrm{TJ}}}g\,d{\it m}.
 \end{equation}
\end{lemma}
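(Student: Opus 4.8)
The plan is to derive the weighted inequality \eqref{eq:3.5} directly from the unweighted Poincar\'e--Wirtinger inequality \eqref{eq:3.7}, by exploiting that the weight $e^{-E/D}$ is bounded above and below by positive constants on the bounded set $\Omega\times\Omega_{\mathrm{TJ}}$. Indeed, since $\sigma$ is $C^1$ on $\R$ and each $\vec{a}\mapsto|\vec{a}-\vec{x}^{(j)}|$ is Lipschitz, the energy $E(\Delta\vec{\alpha},\vec{a})=\sum_{j=1}^3\sigma(\Delta^{(j)}\alpha)|\vec{a}-\vec{x}^{(j)}|$ is continuous on the compact closure $\overline{\Omega\times\Omega_{\mathrm{TJ}}}$ (this is the only place the standing assumption that $\Omega_{\mathrm{TJ}}$ is bounded is used), hence attains there a finite minimum $E_*$ and maximum $E^*$. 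Consequently $e^{-E^*/D}\le e^{-E/D}\le e^{-E_*/D}$ pointwise, so the weighted norm $\|\cdot\|_{L^2(\Omega\times\Omega_{\mathrm{TJ}},d{\it m})}$ and the unweighted $L^2$ norm on $\Omega\times\Omega_{\mathrm{TJ}}$ are comparable.

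I would then proceed in three steps. \emph{Step 1 (optimality of the weighted mean).} Since $\bar g_{L^2(\Omega\times\Omega_{\mathrm{TJ}},d{\it m})}=\Cr{const:3.1}\iint g\,d{\it m}$ with $1/\Cr{const:3.1}=\iint d{\it m}$ by \eqref{eq:3.6}, the constant $\bar g_{L^2(\Omega\times\Omega_{\mathrm{TJ}},d{\it m})}$ is the $L^2(\Omega\times\Omega_{\mathrm{TJ}},d{\it m})$-orthogonal projection of $g$ onto the constants, so $\|g-\bar g_{L^2(\Omega\times\Omega_{\mathrm{TJ}},d{\it m})}\|_{L^2(\Omega\times\Omega_{\mathrm{TJ}},d{\it m})}\le\|g-c\|_{L^2(\Omega\times\Omega_{\mathrm{TJ}},d{\it m})}$ for every constant $c$; in particular I take $c$ to be the ordinary integral mean $\bar g$ of $g$ over $\Omega\times\Omega_{\mathrm{TJ}}$. \emph{Step 2 (pass to Lebesgue measure and apply \eqref{eq:3.7}).} Estimate $\iint|g-\bar g|^2\,d{\it m}\le e^{-E_*/D}\iint|g-\bar g|^2\,d\Delta\vec{\alpha}d\vec{a}$ and invoke the assumed inequality \eqref{eq:3.7} to bound the right-hand side by $e^{-E_*/D}\Cr{const:3.2}\iint\bigl(|\nabla^\Omega_{\Delta\vec{\alpha}}g|^2+|\nabla_{\vec{a}}g|^2\bigr)\,d\Delta\vec{\alpha}d\vec{a}$. \emph{Step 3 (return to the weighted measure).} Write $d\Delta\vec{\alpha}d\vec{a}=e^{E/D}\,d{\it m}\le e^{E^*/D}\,d{\it m}$. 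Chaining the three estimates yields \eqref{eq:3.5} with $\Cr{const:3.3}=e^{(E^*-E_*)/D}\,\Cr{const:3.2}$.

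I do not expect a genuine obstacle here: the argument is nothing more than the comparability of the weighted and unweighted measures together with the minimality of the mean in $L^2(\Omega\times\Omega_{\mathrm{TJ}},d{\it m})$. The only two points that deserve a line of justification are the boundedness of $E$ on $\overline{\Omega\times\Omega_{\mathrm{TJ}}}$ (continuity of $E$ plus compactness of the closure, using boundedness of $\Omega_{\mathrm{TJ}}$) and the orthogonality/Pythagorean identity for the weighted mean, both of which are elementary. The appeal to \eqref{eq:3.7} is legitimate precisely because that inequality has been assumed on $\Omega\times\Omega_{\mathrm{TJ}}$ in the paragraph preceding the lemma (and holds, e.g., when this domain is convex).
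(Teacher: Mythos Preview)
Your proposal is correct and follows essentially the same route as the paper: bound the weight $e^{-E/D}$ above and below by positive constants, pass from $d{\it m}$ to Lebesgue measure, apply the assumed unweighted Poincar\'e--Wirtinger inequality \eqref{eq:3.7}, and convert back, arriving at the identical constant $\Cr{const:3.3}=e^{(E^*-E_*)/D}\Cr{const:3.2}$. The only difference is that you spell out the minimality of the weighted mean (Step~1) to justify replacing $\bar g_{L^2(\Omega\times\Omega_{\mathrm{TJ}},d{\it m})}$ by the unweighted mean $\bar g$, whereas the paper writes this replacement in a single line without comment; your version is cleaner on this point.
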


\begin{proof}
 We let
 \begin{equation}
  \label{eq:3.32}
  \Cl{const:3.4}
   =
   \inf_{(\Delta\vec{\alpha},\vec{\alpha})}e^{-\frac{E(\Delta\vec{\alpha},\vec{a})}{D}},
   \quad
   \Cl{const:3.5}
   =
   \sup_{(\Delta\vec{\alpha},\vec{\alpha})}e^{-\frac{E(\Delta\vec{\alpha},\vec{a})}{D}},
 \end{equation}
 so that $\Cr{const:3.4}\leq e^{-\frac{E}{D}}\leq \Cr{const:3.5}$ on
 $\Omega\times\Omega_{\mathrm{TJ}}$. Thus, for $g\in
 C^\infty(\Omega\times\Omega_{\mathrm{TJ}})$, we obtain that,
 \begin{equation}
  \begin{split}
   \|g-\bar{g}_{L^2(\Omega\times\Omega_{\mathrm{TJ}},d{\it m})}
   \|^2_{L^2(\Omega\times\Omega_{\mathrm{TJ}},d{\it m})}
   &\leq
   \Cr{const:3.5}
   \iint_{\Omega\times\Omega_{\mathrm{TJ}}}
   |g-\bar{g}|^2\,d\Delta\vec{\alpha}d\vec{a} \\
   &\leq
   \Cr{const:3.5}
   \Cr{const:3.2}
   \iint_{\Omega\times\Omega_{\mathrm{TJ}}}
   \left(
   |\nabla^\Omega_{\Delta\vec{\alpha}}g|^2
   +
   |\nabla_{\vec{a}}g|^2
   \right)
   \,d\Delta\vec{\alpha}d\vec{a} \\
   &\leq
   \frac{\Cr{const:3.5}\Cr{const:3.2}}{\Cr{const:3.4}}
   \iint_{\Omega\times\Omega_{\mathrm{TJ}}}
   \left(
   |\nabla^\Omega_{\Delta\vec{\alpha}}g|^2
   +
   |\nabla_{\vec{a}}g|^2
   \right)
   \,d{\it m}.
  \end{split}
\end{equation}
 The inequality \eqref{eq:3.5} holds for
\begin{equation}\label{eqc3}
 \Cr{const:3.3}=\frac{\Cr{const:3.5}\Cr{const:3.2}}{\Cr{const:3.4}}.
\end{equation}
\end{proof}

Now we are in position to derive the long-time asymptotic behavior for
the solution of the Fokker-Planck equation \eqref{eq:3.8}.

\begin{theorem}
 \label{thm:3.1}
 Assume that $\sigma$ is a $C^1$ function on $\R$ and
 $\Omega\times\Omega_{\mathrm{TJ}}$ supports the 2-Poincar\'e-Wirtinger
 inequality \eqref{eq:3.7}. Let $f_0\in
 L^2(\Omega\times\Omega_{\mathrm{TJ}},e^{\frac{E}{D}}\,d\Delta\vec{\alpha}d\vec{a})$
 be a probability density function. Then, there exists a constant
 $\Cl{const:3.6}>0$ such that the associated solution $f$ of
 \eqref{eq:3.8} satisfies,
 \begin{equation}
  \label{eq:3.19}
   \iint_{\Omega\times\Omega_{\mathrm{TJ}}}
   |f(\Delta\vec{\alpha},\vec{a},t)-f_\infty(\Delta\vec{\alpha},\vec{a})|^2\,\exp \left(\frac{E(\Delta\vec{\alpha},\vec{a})}{D}\right)
   \,d\Delta\vec{\alpha}d\vec{a}
   \leq
   \Cr{const:3.6}
   e^{-\frac{2\min\{3\gamma,\eta\}D}{\Cr{const:3.3}}t}
 \end{equation}
 for $t>0$, where $f_\infty$, $\Cr{const:3.1}$, and $\Cr{const:3.3}$ are
 defined in \eqref{eq:3.2}, \eqref{eq:3.6}, and \eqref{eqc3}, 
 respectively.
\end{theorem}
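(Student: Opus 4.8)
The plan is to reduce the claim to exponential decay of the transformed unknown $g=f\exp(E/D)$ in the weighted space $L^2(\Omega\times\Omega_{\mathrm{TJ}},d{\it m})$, and then to exploit the self-adjoint structure of $L$ established above. First I would recall that $g_0:=f_0\exp(E/D)$ lies in $L^2(\Omega\times\Omega_{\mathrm{TJ}},d{\it m})$ — this is exactly the hypothesis $f_0\in L^2(\Omega\times\Omega_{\mathrm{TJ}},e^{E/D}\,d\Delta\vec{\alpha}d\vec{a})$ — so that the semigroup solution $g\in C([0,\infty);L^2(\Omega\times\Omega_{\mathrm{TJ}},d{\it m}))\cap C^1((0,\infty);L^2(\Omega\times\Omega_{\mathrm{TJ}},d{\it m}))\cap C((0,\infty);H^2(\Omega\times\Omega_{\mathrm{TJ}},d{\it m}))$ of \eqref{eq:3.13} with the Neumann conditions \eqref{eq:3.25} is available. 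By \eqref{eq:3.1} and \eqref{eq:3.2}, $f-f_\infty=(g-\Cr{const:3.1})\exp(-E/D)$, hence the integral on the left of \eqref{eq:3.19} equals $\|g(\cdot,t)-\Cr{const:3.1}\|_{L^2(\Omega\times\Omega_{\mathrm{TJ}},d{\it m})}^2$, so it suffices to bound this quantity.

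Second, I would identify the weighted mean of $g$ as a fixed constant. Using $\phi\equiv1$ in the weak formulation \eqref{eq:3.9} gives conservation of mass, $\iint_{\Omega\times\Omega_{\mathrm{TJ}}}f(\cdot,t)\,d\Delta\vec{\alpha}d\vec{a}=\iint_{\Omega\times\Omega_{\mathrm{TJ}}}f_0\,d\Delta\vec{\alpha}d\vec{a}=1$ for all $t$ by \eqref{eq:3.35}, equivalently $\iint_{\Omega\times\Omega_{\mathrm{TJ}}}g\,d{\it m}=1$; combined with \eqref{eq:3.6} this shows the weighted average $\bar g_{L^2(\Omega\times\Omega_{\mathrm{TJ}},d{\it m})}=\Cr{const:3.1}\iint g\,d{\it m}=\Cr{const:3.1}$, independently of $t$. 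After extending Lemma~\ref{lem:3.1} from $C^\infty$ to $H^1(\Omega\times\Omega_{\mathrm{TJ}},d{\it m})$ by density and applying it to $g(\cdot,t)$, one obtains
\[
 \|g-\Cr{const:3.1}\|_{L^2(\Omega\times\Omega_{\mathrm{TJ}},d{\it m})}^2
 \le
 \Cr{const:3.3}\iint_{\Omega\times\Omega_{\mathrm{TJ}}}\left(|\nabla^\Omega_{\Delta\vec{\alpha}}g|^2+|\nabla_{\vec{a}}g|^2\right)d{\it m}.
\]

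Third is the energy estimate. For $t>0$ the regularity of $g$ permits differentiation under the integral, and using $g_t=Lg$, $(Lg,1)_{L^2(\Omega\times\Omega_{\mathrm{TJ}},d{\it m})}=0$ (mass conservation again), and the self-adjoint form \eqref{eq:3.21} of $L$, I would compute
\[
 \frac12\frac{d}{dt}\|g-\Cr{const:3.1}\|_{L^2(\Omega\times\Omega_{\mathrm{TJ}},d{\it m})}^2
 =(Lg,g)_{L^2(\Omega\times\Omega_{\mathrm{TJ}},d{\it m})}
 \le
 -\min\{3\gamma,\eta\}D\iint_{\Omega\times\Omega_{\mathrm{TJ}}}\left(|\nabla^\Omega_{\Delta\vec{\alpha}}g|^2+|\nabla_{\vec{a}}g|^2\right)d{\it m}.
\]
Inserting the weighted Poincar\'e inequality from the previous step yields $\frac{d}{dt}\|g-\Cr{const:3.1}\|^2\le-\frac{2\min\{3\gamma,\eta\}D}{\Cr{const:3.3}}\|g-\Cr{const:3.1}\|^2$ in $L^2(\Omega\times\Omega_{\mathrm{TJ}},d{\it m})$, so Gronwall's inequality gives the rate $e^{-2\min\{3\gamma,\eta\}Dt/\Cr{const:3.3}}$. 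Unwinding the first step produces \eqref{eq:3.19} with $\Cr{const:3.6}=\|g_0-\Cr{const:3.1}\|_{L^2(\Omega\times\Omega_{\mathrm{TJ}},d{\it m})}^2=\iint_{\Omega\times\Omega_{\mathrm{TJ}}}|f_0-f_\infty|^2 e^{E/D}\,d\Delta\vec{\alpha}d\vec{a}<\infty$.

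The main obstacle is the rigorous justification rather than the formal manipulation: I must verify that $g(\cdot,t)\in D(L)=H^2(\Omega\times\Omega_{\mathrm{TJ}},d{\it m})$ for $t>0$ so that \eqref{eq:3.21} applies (provided by the smoothing of the semigroup), that Lemma~\ref{lem:3.1} extends to $H^1(\Omega\times\Omega_{\mathrm{TJ}},d{\it m})$ by density, and that mass conservation — hence constancy of the weighted mean and the vanishing of $(Lg,1)$ — holds for the weak/semigroup solution; continuity of $t\mapsto g(t)$ in $L^2(\Omega\times\Omega_{\mathrm{TJ}},d{\it m})$ at $t=0$ then transfers the bound to $\|g_0-\Cr{const:3.1}\|^2$. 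I would also note that the differential inequality need only hold for a.e.\ $t>0$, which matches the hypotheses of Gronwall's inequality recalled above.
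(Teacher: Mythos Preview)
Your proposal is correct and follows essentially the same approach as the paper: pass to $g=f\exp(E/D)$, use the self-adjoint dissipation identity \eqref{eq:3.21} to obtain a differential inequality for $\|g-\bar g_{L^2(d{\it m})}\|_{L^2(d{\it m})}^2$, apply the weighted Poincar\'e inequality of Lemma~\ref{lem:3.1} and Gronwall, and then identify $\bar g_{L^2(d{\it m})}=\Cr{const:3.1}$ via mass conservation. The only cosmetic difference is that the paper tests against $g-\bar g_{L^2(d{\it m})}$ directly, whereas you test against $g$ and invoke $(Lg,1)_{L^2(d{\it m})}=0$; these are equivalent since $\bar g_{L^2(d{\it m})}$ is constant.
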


\begin{proof}
 We multiply \eqref{eq:3.3}  by
 $(g-\bar{g}_{L^2(\Omega\times\Omega_{\mathrm{TJ}},d{\it m})})\exp
 \left(-\frac{E}{D}\right)$ and integrate over
 $\Omega\times\Omega_{\mathrm{TJ}}$, we obtain that,
 \begin{equation*}
  \frac{1}{2}
   \frac{d}{dt}
   \iint_{\Omega\times\Omega_{\mathrm{TJ}}}
   |g-\bar{g}_{L^2(\Omega\times\Omega_{\mathrm{TJ}},d{\it
       m})}|^2\,d{\it m}
   =
   (Lg,g-\bar{g}_{L^2(\Omega\times\Omega_{\mathrm{TJ}},d{\it
       m})})_{L^2(\Omega\times\Omega_{\mathrm{TJ}},d{\it m})}.
 \end{equation*}
 By \eqref{eq:3.21} we get,
 \begin{equation*}
  (Lg,g-\bar{g}_{L^2(\Omega\times\Omega_{\mathrm{TJ}},d{\it
      m})})_{L^2(\Omega\times\Omega_{\mathrm{TJ}},d{\it m})}
   =
   -3\gamma D
   \iint_{\Omega\times\Omega_{\mathrm{TJ}}}
   |\nabla_{\Delta\vec{\alpha}}^\Omega g|^2
   \,d{\it m}
   -
   \eta D
   \iint_{\Omega\times\Omega_{\mathrm{TJ}}}
   |\nabla_{\vec{a}} g|^2
   \,d{\it m}. 
 \end{equation*}
 Combining the above relations with the Poincar\'e inequality
 \eqref{eq:3.5}, we have that,
\begin{equation}
 \frac{1}{2}
  \frac{d}{dt}
  \iint_{\Omega\times\Omega_{\mathrm{TJ}}}
  |g-\bar{g}_{L^2(\Omega\times\Omega_{\mathrm{TJ}},d{\it
      m})}|^2\,d{\it m}
  \leq
  -\frac{\min\{3\gamma,\eta\}D}{\Cr{const:3.3}}
  \|g-\bar{g}_{L^2(\Omega\times\Omega_{\mathrm{TJ}},d{\it
      m})}\|_{L^2(\Omega\times\Omega_{\mathrm{TJ}},d{\it m})}^2. 
 \end{equation}
 Therefore, by Gronwall's inequality, we deduce that,
 \begin{equation}\label{eqest1}
  \iint_{\Omega\times\Omega_{\mathrm{TJ}}}
   |g-\bar{g}_{L^2(\Omega\times\Omega_{\mathrm{TJ}},d{\it
       m})}|^2\,d{\it m}
   \leq
   e^{-2\frac{\min\{3\gamma,\eta\}D}{\Cr{const:3.3}}t}
   \iint_{\Omega\times\Omega_{\mathrm{TJ}}}
  |g_0-\bar{g}_{L^2(\Omega\times\Omega_{\mathrm{TJ}},d{\it
      m})}|^2\,d{\it m}
  =:
  \Cr{const:3.6}
  e^{-\frac{2\min\{3\gamma,\eta\}D}{\Cr{const:3.3}}t}
 \end{equation}
 where $g_0=f_0\exp(-E/D)$.  Using that,  $g=f\exp(E/D)$,  we have,
 \begin{equation}
  \label{eq:3.36}
   \iint_{\Omega\times\Omega_{\mathrm{TJ}}}
   |g-\bar{g}_{L^2(\Omega\times\Omega_{\mathrm{TJ}},d{\it
       m})}|^2\,d{\it m}
   =
   \iint_{\Omega\times\Omega_{\mathrm{TJ}}}
   \left|
    f-\bar{g}_{L^2(\Omega\times\Omega_{\mathrm{TJ}},d{\it m})}
    \exp\left(-\frac{E}{D}\right)
   \right|^2
   \exp\left(\frac{E}{D}\right)\,d\Delta\vec{\alpha}d\vec{a}.
 \end{equation} 
 Integrating \eqref{eq:3.8} on
 $\Omega\times\Omega_{\mathrm{TJ}}$, applying the integration by parts
 and using boundary conditions \eqref{eq:3.8}, we obtain that,
 \begin{equation}
  \label{eq:3.37}
  \frac{d}{dt}
   \iint_{\Omega\times\Omega_{\mathrm{TJ}}}
   f\,d\Delta\vec{\alpha}d\vec{a}
   =
   \iint_{\Omega\times\Omega_{\mathrm{TJ}}}
   \frac{\partial f}{\partial t}\,d\Delta\vec{\alpha}d\vec{a}
   =
   0.
 \end{equation}
 Hence,  due to the assumption on the initial data \eqref{eq:3.35}, it
 follows that,
 \begin{equation*}
   \iint_{\Omega\times\Omega_{\mathrm{TJ}}}
    f(t,\Delta\vec{\alpha},\vec{a})\,d\Delta\vec{\alpha}d\vec{a}
    =
    \iint_{\Omega\times\Omega_{\mathrm{TJ}}}
    f_0(\Delta\vec{\alpha},\vec{a})\,d\Delta\vec{\alpha}d\vec{a}
    =
    1,
 \end{equation*}
 for $t>0$.  Since, $f=g\exp(-E/D)$ and
 $d{\it m}=\exp(-E/D)d\Delta\vec{\alpha}d\vec{a}$,  we have that,
 \begin{equation}\label{eqest2}
  \bar{g}_{L^2(\Omega\times\Omega_{\mathrm{TJ}},d{\it m})}
   =
   \Cr{const:3.1}
   \iint_{\Omega\times\Omega_{\mathrm{TJ}}}g\,d{\it m}
   =
   \Cr{const:3.1}
   \iint_{\Omega\times\Omega_{\mathrm{TJ}}}f\,d\Delta\vec{\alpha}d\vec{a}
   =
   \Cr{const:3.1}.
 \end{equation}
 Combining \eqref{eqest1}, \eqref{eqest2} and
 $f_\infty=\Cr{const:3.1}\exp(-E/D)$, we obtain \eqref{eq:3.19}.
\end{proof}

\subsection{Exponential decay for $f_t$}\label{sec8}
Next, we study finer asymptotics of the solution $f$ of the
Fokker-Planck equation \eqref{eq:3.8}. Due to the properties of
self-adjointness of $L$, the solution $f$ is smooth in time even though
$f$ may not be smooth in space. Thus, we consider long-time asymptotic
behavior of $f_t$.

\begin{theorem}
 \label{thm:3.2}
 Assume that $\sigma$ is a $C^1$ function on $\R$ and
 $\Omega\times\Omega_{\mathrm{TJ}}$ supports the 2-Poincar\'e-Wirtinger
 inequality \eqref{eq:3.7}. Let $f_0\in
 L^2(\Omega\times\Omega_{\mathrm{TJ}},e^{\frac{E}{D}}\,d\Delta\vec{\alpha}d\vec{a})$
 be a probability density function. Then, for any $t_0>0$, there exists a constant
 $\Cl{const:3.8}>0$,  such that the associated solution $f$ of
 \eqref{eq:3.8} satisfies,
 \begin{equation}
  \label{eq:3.20}
   \iint_{\Omega\times\Omega_{\mathrm{TJ}}}
   |f_t(\Delta\vec{\alpha},\vec{a},t)|^2
   \exp \left(\frac{E(\Delta\vec{\alpha},\vec{a})}{D}\right)
   \,d\Delta\vec{\alpha}d\vec{a}
   \leq
   \Cr{const:3.8}
   e^{-\frac{2\min\{3\gamma,\eta\}D}{\Cr{const:3.3}}t}
 \end{equation}
 for $ t>t_0$, where $\Cr{const:3.3}$ is a constant defined in \eqref{eqc3}.
\end{theorem}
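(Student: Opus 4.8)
The plan is to apply the machinery already set up for Theorem~\ref{thm:3.1} not to $g=f\exp(E/D)$ itself but to its time derivative $h:=g_t$. By the regularity $g\in C^k((0,\infty);D(L^l))$ for all $k,l$ recorded after \eqref{eq:3.13}, and because the evolution $g_t=Lg$ is linear with a time-independent generator, differentiating in $t$ shows that $h$ solves $h_t=Lh$ for $t>0$ in the strong sense, with $h(t)\in D(L)$ for every $t>0$. Moreover, conservation of mass gives
\begin{equation*}
 \iint_{\Omega\times\Omega_{\mathrm{TJ}}}h\,d{\it m}
 =\iint_{\Omega\times\Omega_{\mathrm{TJ}}}g_t\exp\left(-\frac{E}{D}\right)d\Delta\vec{\alpha}d\vec{a}
 =\iint_{\Omega\times\Omega_{\mathrm{TJ}}}\frac{\partial f}{\partial t}\,d\Delta\vec{\alpha}d\vec{a}=0
\end{equation*}
by \eqref{eq:3.1} and \eqref{eq:3.37}, so $\bar h_{L^2(\Omega\times\Omega_{\mathrm{TJ}},d{\it m})}=0$ for all $t>0$ --- exactly the mean-zero property needed to apply Lemma~\ref{lem:3.1} to $h$.

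I would then repeat the energy estimate from the proof of Theorem~\ref{thm:3.1} with $h$ in place of $g-\bar g_{L^2(\Omega\times\Omega_{\mathrm{TJ}},d{\it m})}$: testing $h_t=Lh$ against $h\,d{\it m}$ and using the identity \eqref{eq:3.21} yields
\begin{equation*}
 \frac12\frac{d}{dt}\|h\|_{L^2(\Omega\times\Omega_{\mathrm{TJ}},d{\it m})}^2
 =-3\gamma D\iint_{\Omega\times\Omega_{\mathrm{TJ}}}|\nabla^\Omega_{\Delta\vec{\alpha}}h|^2\,d{\it m}
 -\eta D\iint_{\Omega\times\Omega_{\mathrm{TJ}}}|\nabla_{\vec{a}}h|^2\,d{\it m}
 \le-\frac{\min\{3\gamma,\eta\}D}{\Cr{const:3.3}}\|h\|_{L^2(\Omega\times\Omega_{\mathrm{TJ}},d{\it m})}^2,
\end{equation*}
where the last inequality follows from Lemma~\ref{lem:3.1} (extended from $C^\infty$ to $H^1$ by density) together with $\bar h_{L^2(\Omega\times\Omega_{\mathrm{TJ}},d{\it m})}=0$. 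Gronwall's inequality then gives $\|h(t)\|_{L^2(\Omega\times\Omega_{\mathrm{TJ}},d{\it m})}^2\le e^{-\frac{2\min\{3\gamma,\eta\}D}{\Cr{const:3.3}}(t-t_0)}\|h(t_0)\|_{L^2(\Omega\times\Omega_{\mathrm{TJ}},d{\it m})}^2$ for $t>t_0$. Since $f_t=h\exp(-E/D)$, the left-hand side of \eqref{eq:3.20} is exactly $\|h(t)\|_{L^2(\Omega\times\Omega_{\mathrm{TJ}},d{\it m})}^2$, so the theorem will follow with $\Cr{const:3.8}=e^{\frac{2\min\{3\gamma,\eta\}D}{\Cr{const:3.3}}t_0}\|h(t_0)\|_{L^2(\Omega\times\Omega_{\mathrm{TJ}},d{\it m})}^2$ once $\|h(t_0)\|_{L^2(\Omega\times\Omega_{\mathrm{TJ}},d{\it m})}$ is shown to be finite.

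This last point is the main obstacle, and the only genuinely new ingredient beyond Theorem~\ref{thm:3.1}: at $t_0=0$ the quantity $\|Lg_0\|_{L^2(\Omega\times\Omega_{\mathrm{TJ}},d{\it m})}$ need not be finite, since the hypothesis on $f_0$ only guarantees $g_0=f_0\exp(E/D)\in L^2(\Omega\times\Omega_{\mathrm{TJ}},d{\it m})$. I would resolve it using the parabolic smoothing estimate for the self-adjoint, dissipative generator $L$: writing $h(t_0)=g_t(t_0)=Le^{t_0L}g_0$ and applying the spectral theorem to $-L\ge0$,
\begin{equation*}
 \|h(t_0)\|_{L^2(\Omega\times\Omega_{\mathrm{TJ}},d{\it m})}
 =\|Le^{t_0L}g_0\|_{L^2(\Omega\times\Omega_{\mathrm{TJ}},d{\it m})}
 \le\Bigl(\sup_{\lambda\ge0}\lambda e^{-t_0\lambda}\Bigr)\|g_0\|_{L^2(\Omega\times\Omega_{\mathrm{TJ}},d{\it m})}
 =\frac{1}{t_0e}\|g_0\|_{L^2(\Omega\times\Omega_{\mathrm{TJ}},d{\it m})}<\infty .
\end{equation*}
Combining this with the bound of the previous paragraph proves \eqref{eq:3.20} with the explicit constant $\Cr{const:3.8}=\dfrac{1}{t_0^{2}e^{2}}\,e^{\frac{2\min\{3\gamma,\eta\}D}{\Cr{const:3.3}}t_0}\,\|g_0\|_{L^2(\Omega\times\Omega_{\mathrm{TJ}},d{\it m})}^2$, which indeed depends on $t_0$ as stated in the theorem.
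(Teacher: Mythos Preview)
Your proof is correct and follows essentially the same route as the paper's: differentiate $g_t=Lg$ in time to obtain $(g_t)_t=L(g_t)$, test against $g_t$ in $L^2(d{\it m})$, use the mean-zero property $\int g_t\,d{\it m}=0$ together with the weighted Poincar\'e inequality, and apply Gronwall from $t_0$. The only addition is your explicit spectral-calculus bound $\|Le^{t_0L}g_0\|\le (t_0e)^{-1}\|g_0\|$ to justify finiteness of $\Cr{const:3.8}$, which the paper leaves implicit in the semigroup regularity $g\in C^k((0,\infty);D(L^l))$.
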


\begin{proof}
The equation $g_t=Lg$, \eqref{eq:3.3}, \eqref{eq:3.4} can be written as,
\begin{equation}
 \label{eq:3.10}
 \exp\left(-\frac{E}{D}\right)
  g_t
  =
  3\gamma D
  \nabla^\Omega_{\Delta\vec{\alpha}}\cdot
  \left(
  \exp\left(-\frac{E}{D}\right)
  \nabla^\Omega_{\Delta\vec{\alpha}} g
  \right)
  +
  \eta D
  \nabla_{\vec{a}}\cdot
  \left(
  \exp\left(-\frac{E}{D}\right)
  \nabla_{\vec{a}} g
  \right).
\end{equation}
Note that,  $E(\Delta\vec{\alpha}, \vec{a})$ is a function of only 
 misorientations and the positions of the triple junctions. Take a derivative in time of
 \eqref{eq:3.10}, then,
 \begin{equation}
  \label{eq:3.38}
  \exp\left(-\frac{E}{D}\right)
  g_{tt}
  =
  3\gamma D
  \nabla^\Omega_{\Delta\vec{\alpha}}\cdot
  \left(
   \exp\left(-\frac{E}{D}\right)
   \nabla^\Omega_{\Delta\vec{\alpha}} g_t
  \right)
  +
  \eta D
  \nabla_{\vec{a}}\cdot
  \left(
   \exp\left(-\frac{E}{D}\right)
  \nabla_{\vec{a}} g_t
  \right).
 \end{equation}
 Multiplying \eqref{eq:3.38} by  $g_t$, integrating over
 $\Omega\times\Omega_{\mathrm{TJ}}$, integrating by parts and using
 the boundary conditions \eqref{eq:3.25}, it follows that,
 \begin{equation}
  \label{eq:3.11}
   \frac12
   \frac{d}{dt}
   \iint_{\Omega\times\Omega_{\mathrm{TJ}}}
   |g_t|^2
   \,d{\it m}
   =
   -
   \iint_{\Omega\times\Omega_{\mathrm{TJ}}}
   \left(
    3\gamma D
    |\nabla^\Omega_{\Delta\vec{\alpha}} g_t|^2
    +\eta D
    |\nabla_{\vec{a}} g_t|^2
   \right) 
   \,d{\it m}.
 \end{equation}
 Next, note that,
 \begin{equation*}
  \iint_{\Omega\times\Omega_{\mathrm{TJ}}}
   g_t
   \,d{\it m}
   =
   0,
 \end{equation*}
 thus, we obtain by the Poincar\'e inequality,
 \begin{equation*}
  \iint_{\Omega\times\Omega_{\mathrm{TJ}}}
   \left(
    3\gamma D
    |\nabla^\Omega_{\Delta\vec{\alpha}} g_t|^2
    +\eta D
    |\nabla_{\vec{a}} g_t|^2
   \right) 
   \,d{\it m}
   \geq
   \frac{\min\{3\gamma, \eta\}}{\Cr{const:3.3}}
   \iint_{\Omega\times\Omega_{\mathrm{TJ}}}
   |g_t|^2
   \,d{\it m}.
 \end{equation*}
 Hence,  one can obtain from \eqref{eq:3.11} that,
 \begin{equation*}
  \frac12
   \frac{d}{dt}
   \iint_{\Omega\times\Omega_{\mathrm{TJ}}}
   |g_t|^2
   \,d{\it m}
   \leq
   -
   \frac{\min\{3\gamma, \eta\}}{\Cr{const:3.3}}
   \iint_{\Omega\times\Omega_{\mathrm{TJ}}}
   |g_t|^2
   \,d{\it m}.
 \end{equation*} 
 Thus,  by Gronwall's inequality,
 \begin{equation}
  \label{eq:3.12}
   \iint_{\Omega\times\Omega_{\mathrm{TJ}}}
   |g_t(\Delta\vec{\alpha},\vec{a},t)|^2\,d{\it m}
   \leq
   \left(
   \iint_{\Omega\times\Omega_{\mathrm{TJ}}}
   |g_t(\Delta\vec{\alpha},\vec{a},t_0)|^2\,d{\it m}
   \right)
   e^{-\frac{2\min\{3\gamma,\eta\}D}{\Cr{const:3.3}}t}   
   =:
   \Cr{const:3.8}
   e^{-\frac{2\min\{3\gamma,\eta\}D}{\Cr{const:3.3}}t}.
 \end{equation}
 for $t>t_0$.
 Note again, that $f=g\exp(-E/D)$,
 $d{\it m}=\exp(-E/D)\,d\Delta\vec{\alpha}d\vec{a}$, and,
 \begin{equation}
  \iint_{\Omega\times\Omega_{\mathrm{TJ}}}
   |g_t|^2\,d{\it m}
   =
  \iint_{\Omega\times\Omega_{\mathrm{TJ}}}
   |f_t|^2\exp\left(
	       \frac{E}{D}
	      \right)
   \,d\Delta\vec{\alpha}d\vec{a}.
 \end{equation}
 Therefore, the estimate \eqref{eq:3.20} follows.
\end{proof}

\subsection{Exponential decay for the gradient of $f$}\label{sec9}
 Here we establish the exponential decay for the gradient of $f$. To derive
the asymptotics of the gradient of $f$, one may consider the
equation for the derivative of $f$. However, we cannot take a space
derivative of the Fokker-Planck equation \eqref{eq:3.8}, because of lack of
regularity for the solution $f$. Nevertheless, from the exponential
decay for $f_t$ in Theorem \ref{thm:3.2}, one can obtain a long time
asymptotics for the gradient of $f$.

\begin{theorem}
 \label{thm:3.3} 
 Assume that $\sigma$ is a $C^1$ function on $\R$ and
 $\Omega\times\Omega_{\mathrm{TJ}}$ supports the 2-Poincar\'e-Wirtinger
 inequality \eqref{eq:3.7}. Let $f_0\in
 L^2(\Omega\times\Omega_{\mathrm{TJ}},e^{\frac{E}{D}}\,d\Delta\vec{\alpha}d\vec{a})$
 be a probability density function. Then, for any $t_0>0$, there is a constant
 $\Cl{const:3.9}>0$,  such that the associated solution $f$ of
 \eqref{eq:3.8} satisfies,
 \begin{equation}
  \label{eq:3.16}
  \iint_{\Omega\times\Omega_{\mathrm{TJ}}}
   \left(
    3\gamma D
    |\nabla^\Omega_{\Delta\vec{\alpha}} (f-f_\infty)|^2
    +
    \eta D
    |\nabla_{\vec{a}}(f-f_\infty)|^2
   \right)
   \exp \left(\frac{E(\Delta\vec{\alpha},\vec{a})}{D}\right)
   \,d\Delta\vec{\alpha}d\vec{a}
   \leq
   \Cr{const:3.9}
   e^{-\frac{2\min\{3\gamma,\eta\}D}{\Cr{const:3.3}}t}
 \end{equation}
 for $ t>t_0$, where $\Cr{const:3.3}$ is a constant \eqref{eqc3}.
\end{theorem}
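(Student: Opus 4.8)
The plan is to deduce the $H^1$-type decay \eqref{eq:3.16} directly from the $L^2$-decay already obtained in Theorems~\ref{thm:3.1} and~\ref{thm:3.2}, without ever differentiating \eqref{eq:3.8} in space (which the regularity of $f$, and of $E$, does not permit). Set $G:=g-\Cr{const:3.1}=(f-f_\infty)\exp(E/D)$. By the mass conservation identity \eqref{eq:3.37}, the normalization \eqref{eq:3.35}, and the computation \eqref{eqest2}, the weighted mean $\bar{g}_{L^2(d{\it m})}$ equals the constant $\Cr{const:3.1}$ for all $t$; since constants lie in $\Ker L$, it follows that $G$ solves $G_t=LG$ for $t>0$, obeys the Neumann conditions \eqref{eq:3.25}, and satisfies $\iint_{\Omega\times\Omega_{\mathrm{TJ}}}G\,d{\it m}=0$. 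For every $t>0$ the regularity recorded after \eqref{eq:3.13} gives $g(t)\in H^2(\Omega\times\Omega_{\mathrm{TJ}},d{\it m})=D(L)$, hence so does $G(t)$, and the symmetric form \eqref{eq:3.21} applied with $g_1=g_2=G$ shows that
\begin{equation*}
 \Phi(t):=\iint_{\Omega\times\Omega_{\mathrm{TJ}}}\left(3\gamma D\,|\nabla^\Omega_{\Delta\vec{\alpha}}G|^2+\eta D\,|\nabla_{\vec{a}}G|^2\right)d{\it m}
\end{equation*}
satisfies, using also $G_t=LG$,
\begin{equation*}
 \Phi(t)=-(LG,G)_{L^2(\Omega\times\Omega_{\mathrm{TJ}},d{\it m})}=-(G_t,G)_{L^2(\Omega\times\Omega_{\mathrm{TJ}},d{\it m})}.
\end{equation*}

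Next I would bound the right-hand side by the Cauchy--Schwarz inequality, $\Phi(t)\le\|G_t(t)\|_{L^2(d{\it m})}\,\|G(t)\|_{L^2(d{\it m})}$ for $t>0$. Here $\|G(t)\|_{L^2(d{\it m})}^2=\iint|g-\bar{g}_{L^2(d{\it m})}|^2\,d{\it m}$ is controlled by estimate \eqref{eqest1} in the proof of Theorem~\ref{thm:3.1}, and $\|G_t(t)\|_{L^2(d{\it m})}^2=\iint|g_t|^2\,d{\it m}$ is controlled, for $t>t_0$, by estimate \eqref{eq:3.12} in the proof of Theorem~\ref{thm:3.2}; both decay like $e^{-\frac{2\min\{3\gamma,\eta\}D}{\Cr{const:3.3}}t}$, so
\begin{equation*}
 \Phi(t)\le\sqrt{\Cr{const:3.6}\,\Cr{const:3.8}}\;e^{-\frac{2\min\{3\gamma,\eta\}D}{\Cr{const:3.3}}t},\qquad t>t_0.
\end{equation*}

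It then remains to transfer this to the weighted norm in \eqref{eq:3.16}. From $f-f_\infty=G\exp(-E/D)$ one computes, for $\nabla_{\bullet}$ equal to either $\nabla^\Omega_{\Delta\vec{\alpha}}$ or $\nabla_{\vec{a}}$,
\begin{equation*}
 \nabla_{\bullet}(f-f_\infty)=\exp(-E/D)\left(\nabla_{\bullet}G-\tfrac1D\,G\,\nabla_{\bullet}E\right),
\end{equation*}
hence, by $|a-b|^2\le 2|a|^2+2|b|^2$,
\begin{equation*}
 |\nabla_{\bullet}(f-f_\infty)|^2\exp(E/D)\le 2\exp(-E/D)\,|\nabla_{\bullet}G|^2+\tfrac{2}{D^2}\exp(-E/D)\,|G|^2\,|\nabla_{\bullet}E|^2 .
\end{equation*}
Multiplying by the weights $3\gamma D$, $\eta D$, integrating, and invoking the boundedness of $\nabla^\Omega_{\Delta\vec{\alpha}}E$ and $\nabla_{\vec{a}}E$ on $\Omega\times\Omega_{\mathrm{TJ}}$ --- these are precisely the $L^\infty$ bounds on $\vec{v}_{\Delta\vec{\alpha}}$ and $\vec{v}_{\vec{a}}$ used earlier, finite because $\sigma\in C^1(\R)$, the misorientations range over a compact set, and $\Omega_{\mathrm{TJ}}$ is bounded --- one bounds the left-hand side of \eqref{eq:3.16} by $2\Phi(t)+C\,\|G(t)\|_{L^2(d{\it m})}^2$, with $C>0$ depending only on $\gamma,\eta,D$ and these $L^\infty$ norms. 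Both terms decay like $e^{-\frac{2\min\{3\gamma,\eta\}D}{\Cr{const:3.3}}t}$ --- the first by the previous step, the second by \eqref{eqest1} --- which yields \eqref{eq:3.16} with a suitable $\Cr{const:3.9}>0$ (depending on $t_0$ through $\Cr{const:3.8}$).

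The one conceptual step is the identity $\Phi(t)=-(G_t,G)_{L^2(d{\it m})}$: it turns the $L^2$-decay of $f-f_\infty$ and of $f_t$ into decay of the Dirichlet energy $\Phi$ without using any spatial derivative of the equation. The remaining work --- tracking the $\exp(\pm E/D)$ weights and absorbing the lower-order term $\tfrac1D G\,\nabla_{\bullet}E$ through the $L^\infty$ bound on $\nabla E$ --- is routine, so I do not anticipate a genuine obstacle; in particular no regularity of $E$ beyond $\sigma\in C^1$ is needed.
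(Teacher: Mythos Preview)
Your proof is correct and is genuinely more direct than the paper's. Both arguments aim to show that the Dirichlet energy
\[
 \Phi(t)=\iint_{\Omega\times\Omega_{\mathrm{TJ}}}\bigl(3\gamma D\,|\nabla^\Omega_{\Delta\vec{\alpha}}g|^2+\eta D\,|\nabla_{\vec{a}}g|^2\bigr)\,d{\it m}
\]
decays exponentially (note $\nabla G=\nabla g$, so your $\Phi$ coincides with the paper's), and both then pass from $g$ to $f-f_\infty$ by the same pointwise computation and the $L^\infty$ bound on $\nabla E$. The difference is in how $\Phi$ is controlled. The paper multiplies \eqref{eq:3.10} by $g_t$ to obtain $\iint|g_t|^2\,d{\it m}=-\tfrac12\frac{d}{dt}\Phi(t)$; combined with \eqref{eq:3.12} this gives $-2\Cr{const:3.8}e^{-\lambda t}\le\frac{d}{dt}\Phi\le 0$, so $\Phi$ is monotone. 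A second energy identity (multiply by $g$) then shows $\int_0^\infty\Phi\,dt<\infty$, forcing $\Phi(t_j)\to0$ along a subsequence and hence, by monotonicity, $\Phi(t)\to0$. Finally the bound on $\frac{d}{dt}\Phi$ is integrated from $T$ to $\infty$ to produce the exponential rate.

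Your route bypasses all of this: from \eqref{eq:3.21} you read off $\Phi(t)=-(LG,G)_{L^2(d{\it m})}=-(G_t,G)_{L^2(d{\it m})}$ and apply Cauchy--Schwarz together with \eqref{eqest1} and \eqref{eq:3.12}. This is shorter, avoids the subsequence extraction and the monotonicity argument, and makes transparent why the decay rate matches the product of the rates in Theorems~\ref{thm:3.1} and~\ref{thm:3.2}. What the paper's approach yields in addition is the monotonicity of $t\mapsto\Phi(t)$, which is not needed for the statement of Theorem~\ref{thm:3.3} but is of independent interest.
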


\begin{proof}
 Multiplying \eqref{eq:3.10} by $g_t$, integrating over
 $\Omega\times\Omega_{\mathrm{TJ}}$, and using the integration by parts with
 the boundary conditions~\eqref{eq:3.25}, one can show,
 \begin{equation}
  \begin{split}
   &\quad
   \iint_{\Omega\times\Omega_{\mathrm{TJ}}}
   |g_t|^2\,d{\it m}  \\
   &=
   \iint_{\Omega\times\Omega_{\mathrm{TJ}}}
   \left(
   3\gamma D
   \nabla^\Omega_{\Delta\vec{\alpha}}\cdot
   \left(
   \exp\left(-\frac{E}{D}\right)
   \nabla^\Omega_{\Delta\vec{\alpha}} g
   \right)
   +
   \eta D
   \nabla_{\vec{a}}\cdot
   \left(
  \exp\left(-\frac{E}{D}\right)
   \nabla_{\vec{a}} g
   \right)  
   \right)
   g_t
   \,d\Delta\vec{\alpha}d\vec{a} \\
   &=
   -
   \iint_{\Omega\times\Omega_{\mathrm{TJ}}}
   \left(
   3\gamma D
   \left(
   \exp\left(-\frac{E}{D}\right)
   \nabla^\Omega_{\Delta\vec{\alpha}} g
   \right)
   \cdot
   \nabla^\Omega_{\Delta\vec{\alpha}} g_t
   +
   \eta D
   \left(
   \exp\left(-\frac{E}{D}\right)
   \nabla_{\vec{a}} g
   \right)  
   \cdot
   \nabla_{\vec{a}}
   g_t
   \right)
   \,d\Delta\vec{\alpha}d\vec{a}. 
  \end{split}
 \end{equation}
 On the other hand, by direct computation and
 $d{\it m}=e^{-E/D}\,d\Delta\vec{\alpha}d\vec{a}$, we have,
 \begin{equation*}
  \begin{split}
   &\quad
   \frac12
   \frac{d}{dt}
   \iint_{\Omega\times\Omega_{\mathrm{TJ}}}
   \left(
   3\gamma D
   |\nabla^\Omega_{\Delta\vec{\alpha}} g|^2
   +
   \eta D
   |\nabla_{\vec{a}}g|^2
   \right)
   \,d{\it m }\\
   &=
  \iint_{\Omega\times\Omega_{\mathrm{TJ}}}
  \left(
  3\gamma D
  \left(
  \exp\left(-\frac{E}{D}\right)
  \nabla^\Omega_{\Delta\vec{\alpha}} g
  \right)
  \cdot
  \nabla^\Omega_{\Delta\vec{\alpha}} g_t
  +
  \eta D
  \left(
  \exp\left(-\frac{E}{D}\right)
  \nabla_{\vec{a}} g
  \right)  
  \cdot
  \nabla_{\vec{a}}
  g_t
  \right)
  \,d\Delta\vec{\alpha}d\vec{a}.
  \end{split}
 \end{equation*}
Thus,  we arrive at,
 \begin{equation*}
  \begin{split}
   \iint_{\Omega\times\Omega_{\mathrm{TJ}}}
   |g_t|^2\,d{\it m }
   =  
   -
   \frac12
   \frac{d}{dt}
   \iint_{\Omega\times\Omega_{\mathrm{TJ}}}
   \left(
   3\gamma D
   |\nabla^\Omega_{\Delta\vec{\alpha}} g|^2
   +
   \eta D
   |\nabla_{\vec{a}}g|^2
   \right)
   \,d{\it m}.
  \end{split}
 \end{equation*}
 Using \eqref{eq:3.12} and non-negativity of the integral of $|g_t|^2$,
 one can obtain, for $t>t_0$,
 \begin{equation}
 \label{eq:3.17}
 \begin{split}
  -2\Cr{const:3.8}
  e^{-\frac{2\min\{3\gamma,\eta\}D}{\Cr{const:3.3}}t}
  \leq
  \frac{d}{dt}
  \iint_{\Omega\times\Omega_{\mathrm{TJ}}}
  \left(
  3\gamma D
  |\nabla^\Omega_{\Delta\vec{\alpha}} g|^2
  +
  \eta D
  |\nabla_{\vec{a}}g|^2
  \right)
  \,d{\it m}
  \leq 
  0.
 \end{split}
 \end{equation}
 Specifically, $3\gamma D\|\nabla^\Omega_{\Delta\vec{\alpha}}
 g\|^2_{L^2(\Omega\times\Omega_{\mathrm{TJ}},d{\it m})} + \eta
 D\|\nabla_{\vec{a}} g\|^2_{L^2(\Omega\times\Omega_{\mathrm{TJ}},d{\it m})}$
 is monotone decreasing in time. On the other hand, multiplying
 \eqref{eq:3.10} by  $g$, integrating by parts and using the boundary conditions
 \eqref{eq:3.25}, we have,
 \begin{equation}
  \frac{d}{dt}\iint_{\Omega\times\Omega_{\mathrm{TJ}}}
   |g|^2
   \,d{\it m}
   +
   \iint_{\Omega\times\Omega_{\mathrm{TJ}}}
   \left(
    3\gamma D
    |\nabla^\Omega_{\Delta\vec{\alpha}} g|^2
    +
    \eta D
    |\nabla_{\vec{a}}g|^2
   \right)
   \,d{\it m}
   =
   0.
 \end{equation}
 Now,  integrating over $0\leq t\leq T$ for $T>0$, we arrive at,
 \begin{equation}
 \iint_{\Omega\times\Omega_{\mathrm{TJ}}}
  |g|^2
  \,d{\it m}
  \bigg|_{t=T}
  +
  \int_0^T
  \,dt
   \iint_{\Omega\times\Omega_{\mathrm{TJ}}}
   \left(
    3\gamma D
    |\nabla^\Omega_{\Delta\vec{\alpha}} g|^2
    +
    \eta D
    |\nabla_{\vec{a}}g|^2
   \right)
   \,d{\it m}
   =
   \iint_{\Omega\times\Omega_{\mathrm{TJ}}}
   |g|^2
   \,d{\it m}
   \bigg|_{t=0}.
 \end{equation}
 Thus,  there is a positive monotone increasing sequence $\{t_j\}$ such
 that $t_j\rightarrow\infty$ and,
 \begin{equation}
  \label{eq:3.15}
   \iint_{\Omega\times\Omega_{\mathrm{TJ}}}
   \left(
    3\gamma D
    |\nabla^\Omega_{\Delta\vec{\alpha}} g|^2
    +
    \eta D
    |\nabla_{\vec{a}}g|^2
   \right)
   \,d{\it m}
   \bigg|_{t=t_j}
   \rightarrow0
   \quad
   \text{as}\ 
   t_j\rightarrow\infty.
 \end{equation}
 Using the monotonicity in time of $3\gamma
 D\|\nabla^\Omega_{\Delta\vec{\alpha}}
 g\|^2_{L^2(\Omega\times\Omega_{\mathrm{TJ}},d{\it m})} + \eta
 D\|\nabla_{\vec{a}} g\|^2_{L^2(\Omega\times\Omega_{\mathrm{TJ}},d{\it m})}$,
 we can take a full limit in time of \eqref{eq:3.15}, namely,
 \begin{equation}
  \iint_{\Omega\times\Omega_{\mathrm{TJ}}}
   \left(
    3\gamma D
    |\nabla^\Omega_{\Delta\vec{\alpha}} g|^2
    +
    \eta D
    |\nabla_{\vec{a}}g|^2
   \right)
   \,d{\it m}
  \rightarrow0
  \quad
  \text{as}\ 
  t\rightarrow\infty.
 \end{equation}
 Next, for $0<T<T'$, we obtain,
 \begin{equation}
  \begin{split}
   &\quad
   \left|
   \iint_{\Omega\times\Omega_{\mathrm{TJ}}}
   \left(
   3\gamma D
   |\nabla^\Omega_{\Delta\vec{\alpha}} g|^2
   +
   \eta D
   |\nabla_{\vec{a}}g|^2
   \right)
   \,d{\it m}
   \bigg|_{t=T'}
   -
   \iint_{\Omega\times\Omega_{\mathrm{TJ}}}
   \left(
   3\gamma D
   |\nabla^\Omega_{\Delta\vec{\alpha}} g|^2
   +
   \eta D
   |\nabla_{\vec{a}}g|^2
   \right)
   \,d{\it m}
   \bigg|_{t=T}
   \right|
   \\
   &=
   \left|
   \int_{T}^{T'}
   \left(
   \frac{d}{dt}
   \iint_{\Omega\times\Omega_{\mathrm{TJ}}}
   \left(
   3\gamma D
   |\nabla^\Omega_{\Delta\vec{\alpha}} g|^2
   +
   \eta D
   |\nabla_{\vec{a}}g|^2
   \right)
   \,d{\it m}
   \right)
   \,dt
   \right| \\
   &\leq
   \int_{T}^{T'}
   \left|
   \frac{d}{dt}
   \iint_{\Omega\times\Omega_{\mathrm{TJ}}}
   \left(
   3\gamma D
   |\nabla^\Omega_{\Delta\vec{\alpha}} g|^2
   +
   \eta D
   |\nabla_{\vec{a}}g|^2
   \right)
   \,d{\it m}
   \right|
   \,dt.
  \end{split}
 \end{equation}
 Using \eqref{eq:3.17}, we deduce,
 \begin{equation*}
  \left|
   \frac{d}{dt}
   \iint_{\Omega\times\Omega_{\mathrm{TJ}}}
   \left(
    3\gamma D
    |\nabla^\Omega_{\Delta\vec{\alpha}} g|^2
    +
    \eta D
    |\nabla_{\vec{a}}g|^2
   \right)
   \,d{\it m}
  \right|
  \leq
  2\Cr{const:3.8}
  e^{-\frac{2\min\{3\gamma,\eta\}D}{\Cr{const:3.3}}t}.
 \end{equation*}
 Hence,  we arrive at,
 \begin{equation}
  \begin{split}
   &\quad
   \left|
   \iint_{\Omega\times\Omega_{\mathrm{TJ}}}
   \left(
   3\gamma D
   |\nabla^\Omega_{\Delta\vec{\alpha}} g|^2
   +
   \eta D
   |\nabla_{\vec{a}}g|^2
   \right)
   \,d{\it m}
   \bigg|_{t=T'}
   -
  \iint_{\Omega\times\Omega_{\mathrm{TJ}}}
   \left(
   3\gamma D
   |\nabla^\Omega_{\Delta\vec{\alpha}} g|^2
   +
   \eta D
   |\nabla_{\vec{a}}g|^2
   \right)
   \,d{\it m}
   \bigg|_{t=T}
   \right| \\
   &\leq
   \int_{T}^{T'}
   2\Cr{const:3.8}
   e^{-\frac{2\min\{3\gamma,\eta\}D}{\Cr{const:3.3}}t}
   \,dt
   =
   \frac{\Cr{const:3.8}\Cr{const:3.3}}{\min\{3\gamma,\eta\}D}
   \left(
   e^{-\frac{2\min\{3\gamma,\eta\}D}{\Cr{const:3.3}}T}
   -
   e^{-\frac{2\min\{3\gamma,\eta\}D}{\Cr{const:3.3}}T'}
   \right).
  \end{split}
 \end{equation}
Taking a limit $T'\rightarrow\infty$, we obtain that,
\begin{equation}
 \label{eq:3.18}
 \iint_{\Omega\times\Omega_{\mathrm{TJ}}}
  \left(
   3\gamma D
   |\nabla^\Omega_{\Delta\vec{\alpha}} g|^2
   +
   \eta D
   |\nabla_{\vec{a}}g|^2
  \right)
  \,d{\it m}
  \bigg|_{t=T}
  \leq
  \frac{\Cr{const:3.8}\Cr{const:3.3}}{\min\{3\gamma,\eta\}D}
  e^{-\frac{2\min\{3\gamma,\eta\}D}{\Cr{const:3.3}}T}.
\end{equation}
 In addition, by direct calculation,  we have that,
 \begin{equation*}
   \nabla (f-f_\infty)
   =
   \left(
    \nabla g 
    -
    \frac{1}{D}
    (g-\Cr{const:3.1})
    \nabla E
   \right)
   \exp \left(-\frac{E}{D}\right)
   =
   \nabla g 
    \exp \left(-\frac{E}{D}\right)
    -
    \frac{1}{D}
    (f-f_\infty)
    \nabla E,
 \end{equation*}
 where $\nabla$ is $\nabla^\Omega_{\Delta\vec{\alpha}}$ or
 $\nabla_{\vec{a}}$. Thus,
 \begin{equation*}
  \begin{split}
   &\quad
   \iint_{\Omega\times\Omega_{\mathrm{TJ}}}
   |\nabla^\Omega_{\Delta\vec{\alpha}} (f-f_\infty)|^2
   \exp \left(\frac{E}{D}\right)
   \,d\Delta\vec{\alpha}d\vec{a} \\
   &\leq
   2
   \iint_{\Omega\times\Omega_{\mathrm{TJ}}}
   |\nabla^\Omega_{\Delta\vec{\alpha}}g|^2
   \exp \left(-\frac{E}{D}\right)
   \,d\Delta\vec{\alpha}d\vec{a} 
   +
   \frac{2}{D^2}
   \iint_{\Omega\times\Omega_{\mathrm{TJ}}}
   |f-f_\infty|^2
   |\nabla^\Omega_{\Delta\vec{\alpha}}E|^2
   \exp \left(\frac{E}{D}\right)
   \,d\Delta\vec{\alpha}d\vec{a}. 
  \end{split} 
 \end{equation*}
 Therefore, from
 \eqref{eq:3.19}, \eqref{eq:3.18}, and boundedness of the gradient of
 $E$,  there is a constant $\Cl{const:3.10}>0$,  such that, 
 \begin{equation*}
  \iint_{\Omega\times\Omega_{\mathrm{TJ}}}
   |\nabla^\Omega_{\Delta\vec{\alpha}} (f-f_\infty)|^2
   \exp \left(\frac{E}{D}\right)
   \,d\Delta\vec{\alpha}d\vec{a} 
   \leq
   \Cr{const:3.10}
   e^{-\frac{2\min\{3\gamma,\eta\}D}{\Cr{const:3.3}}t}.
 \end{equation*}
 Similarly,  there is a constant $\Cl{const:3.11}>0$ such that, 
 \begin{equation*}
  \iint_{\Omega\times\Omega_{\mathrm{TJ}}}
   |\nabla_{\vec{a}} (f-f_\infty)|^2
   \exp \left(\frac{E}{D}\right)
   \,d\Delta\vec{\alpha}d\vec{a} 
   \leq
   \Cr{const:3.11}
   e^{-\frac{2\min\{3\gamma,\eta\}D}{\Cr{const:3.3}}t},
 \end{equation*}
 hence,  we obtain \eqref{eq:3.16}.
\end{proof}

\begin{remark}
 Since $E$ is not $C^2$ for $\vec{a}\in\Omega_{\mathrm{TJ}}$, it is not known
 that $g$ is in $C^3$ on $\Omega_{\mathrm{TJ}}$,  hence one cannot take a
 derivative in $\vec{a}$ of \eqref{eq:3.13}. However,  $g$ is smooth in
 time so we can take a derivative in time. Note that,  we do not use
 third derivative in $\vec{a}$ in the proof of Theorem
 \ref{thm:3.3} (cf. \cite{MR0254401,MR0603444}).
\end{remark}

\begin{remark}
 In Theorem \ref{thm:3.1}, \ref{thm:3.2}, and \ref{thm:3.3}, decay rate
 of solutions to \eqref{eq:3.8} may not be optimal. It will be part of a future work
 to obtain optimal decay orders and dependence on the relaxation time scales
 $\gamma, \eta>0$.
\end{remark}

\begin{remark}
In this paper we have used the Poincar\'e inequality to obtain the
large-time asymptotics of the solution in the weighted $L^2$ framework. The specific difficulties for our system are related to the fact that the potential $\nabla^\Omega_{\Delta\vec{\alpha}}E$ is degenerate and  $\nabla_{\vec{a}}E$
 is not smooth enough. When the potential has better properties, such
 as non-degeneracy and smoothness, one could try to employ the
 logarithmic-Sobolev inequality or the higher order energy
 estimates~\cite{MR1842428,MR0889476,MR3497125,MR1760620} to obtain
 the results in weaker spaces. This is currently under study, and  one
 of the subjects of our forthcoming work would be to study the
 logarithmic-Sobolev type of inequalities and Bakry-\'Emery theory to construct the $L^1$ theory of the system discussed in this paper.
\end{remark}

In this section, we obtained long-time asymptotics for joint distribution $f$ on
$\Omega\times\Omega_{\mathrm{TJ}}$ in the weighted $L^2$ space. In
particular, we established that distribution $f$ converges to the Boltzmann distribution $
f_\infty(\Delta\vec{\alpha},\vec{a}) = \Cr{const:3.1} \exp
\left(-\frac{E(\Delta\vec{\alpha},\vec{a})}{D}\right)$ with respect to the
grain boundary energy $E$ on
$\Omega\times\Omega_{\mathrm{TJ}}$. In the next section, we will study
long-time asymptotics of the marginal probability density.

\section{Marginal probability distribution}\label{sec10}
In this section, for a solution $f$ of the Fokker-Planck equation
\eqref{eq:3.8}, which is a joint distribution on
$\Omega\times\Omega_{\mathrm{TJ}}$, we consider the marginal probability
density of misorientations,  $\rho_1$ of $\Omega$. The probability
density $\rho_1$ is related to the Grain Boundary Character
Distribution (GBCD). The GBCD (in 2D context
and with the grain boundary energy density which only depends on the
misorientation $\Delta \alpha$) is an empirical
statistical measure of the relative length (in 2D) of the grain
boundary interface with a given lattice misorientation. GBCD can be viewed as a primary statistical descriptor to characterize texture of the grain boundary
network, and is inversely related to the grain boundary energy density
as discovered in experiments and simulations.
The
reader can consult, for instance, \cite{DK:BEEEKT,DK:gbphysrev,
  MR2772123, MR3729587} for more details about GBCD and the theory of
the GBCD, and see also Section~\ref{sec13}. 

In this section,  we compare the long-time asymptotics for the marginal
distribution $\rho_{1,\infty}$ and the Boltzmann distributions on
$\Omega$. Hence, let us define the marginal distributions for a misorientation
$\Delta\vec{\alpha}=(\Delta\alpha^{(1)},\Delta\alpha^{(2)},\Delta\alpha^{(3)})\in\Omega$,
and for a position of the triple junction
$\vec{a}\in\Omega_{\mathrm{TJ}}$,
\begin{equation}
 \label{eq:4.1}
  \rho_1(\Delta\vec{\alpha},t)
  =
  \int_{\Omega_{\mathrm{TJ}}}
  f(\Delta\vec{\alpha},\vec{a},t)
  \,d\vec{a}, \quad
  \rho_2(\vec{a},t)
  =
  \int_{\Omega}
  f(\Delta\vec{\alpha},\vec{a},t)
  \,d\Delta\vec{\alpha},
\end{equation}
and
\begin{equation}
 \label{eq:4.2}
  \rho_{1,\infty}(\Delta\vec{\alpha})
  =
  \int_{\Omega_{\mathrm{TJ}}}
  f_\infty(\Delta\vec{\alpha},\vec{a})
  \,d\vec{a}, \quad
  \rho_{2,\infty}(\vec{a})
  =
  \int_{\Omega}
  f_\infty(\Delta\vec{\alpha},\vec{a})
  \,d\Delta\vec{\alpha}.
\end{equation}
From Theorems \ref{thm:3.1}, \ref{thm:3.2}, and \ref{thm:3.3} in Section~\ref{sec5}, we can
obtain long time asymptotics of $\rho_1$ and $\rho_2$.

\begin{proposition}
 \label{prop:4.1}
 Assume that $\sigma$ is a $C^1$ function on $\R$ and
 $\Omega\times\Omega_{\mathrm{TJ}}$ supports the 2-Poincar\'e-Wirtinger
 inequality \eqref{eq:3.7}. Let $f_0\in
 L^2(\Omega\times\Omega_{\mathrm{TJ}},e^{\frac{E}{D}}\,d\Delta\vec{\alpha}d\vec{a})$
 be a probability density function. Let $\rho_1$ be defined in
 \eqref{eq:4.1}. Then, for any $t_0>0$,  there are positive constants $\Cl{const:4.1}$,
 $\Cl{const:4.2}$, and $\Cl{const:4.3}>0$, such that for $ t>t_0$,
 \begin{equation}
  \begin{split}
   \int_{\Omega}
   |\rho_1(\Delta\vec{\alpha},t)-\rho_{1,\infty}(\Delta\vec{\alpha})|^2
   \,d\Delta\vec{\alpha} 
   &\leq
   \Cr{const:4.1}
   e^{-\frac{2\min\{3\gamma,\eta\}D}{\Cr{const:3.3}}t}, \\
   \int_{\Omega}
   |(\rho_1)_t(\Delta\vec{\alpha},t)|^2
   \,d\Delta\vec{\alpha} 
   &\leq
   \Cr{const:4.2}
   e^{-\frac{2\min\{3\gamma,\eta\}D}{\Cr{const:3.3}}t}, \\
   \int_{\Omega}
   |\nabla^{\Omega_{\mathrm{TJ}}}_{\Delta\vec{\alpha}}
   (\rho_1(\Delta\vec{\alpha},t)-\rho_{1,\infty}(\Delta\vec{\alpha}))
   |^2
   \,d\Delta\vec{\alpha} 
   &\leq
   \Cr{const:4.3}
   e^{-\frac{2\min\{3\gamma,\eta\}D}{\Cr{const:3.3}}t}, 
  \end{split}
 \end{equation}
 where $\Cr{const:3.3}>0$ is a constant defined in \eqref{eqc3}.
\end{proposition}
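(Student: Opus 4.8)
The plan is to derive all three estimates for the marginal density $\rho_1$ as corollaries of the already-established estimates for the joint density $f$ — Theorems~\ref{thm:3.1}, \ref{thm:3.2} and \ref{thm:3.3} — simply by integrating them over the triple-junction variable $\vec a\in\Omega_{\mathrm{TJ}}$. Two elementary facts do the work. First, since $|\Omega_{\mathrm{TJ}}|<\infty$, Cauchy--Schwarz gives, for any $h\in L^2(\Omega\times\Omega_{\mathrm{TJ}})$, that $\int_\Omega\bigl|\int_{\Omega_{\mathrm{TJ}}}h\,d\vec a\bigr|^2\,d\Delta\vec\alpha\le|\Omega_{\mathrm{TJ}}|\iint_{\Omega\times\Omega_{\mathrm{TJ}}}|h|^2\,d\Delta\vec\alpha d\vec a$. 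Second, since $E$ is continuous, hence bounded, on the compact set $\overline{\Omega\times\Omega_{\mathrm{TJ}}}$, the Boltzmann weight obeys $\Cr{const:3.4}\le e^{-E/D}\le\Cr{const:3.5}$, as in \eqref{eq:3.32}; in particular $1\le\Cr{const:3.5}\,e^{E/D}$ pointwise, so any unweighted $L^2$ integral over $\Omega\times\Omega_{\mathrm{TJ}}$ is at most $\Cr{const:3.5}$ times the corresponding $d{\it m}$-weighted one, which is exactly what lets one pass from the weighted estimates of Section~\ref{sec5} to the unweighted ones in the statement at the cost only of harmless constants.

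For the first inequality I would take $h=f-f_\infty$. By the definitions \eqref{eq:4.1}--\eqref{eq:4.2} one has $\rho_1-\rho_{1,\infty}=\int_{\Omega_{\mathrm{TJ}}}(f-f_\infty)\,d\vec a$, so the two facts above bound $\int_\Omega|\rho_1-\rho_{1,\infty}|^2\,d\Delta\vec\alpha$ by $|\Omega_{\mathrm{TJ}}|\,\Cr{const:3.5}$ times the quantity controlled in Theorem~\ref{thm:3.1}. This yields the first estimate with, e.g., $\Cr{const:4.1}=|\Omega_{\mathrm{TJ}}|\,\Cr{const:3.5}\,\Cr{const:3.6}$ and the decay exponent $2\min\{3\gamma,\eta\}D/\Cr{const:3.3}$ carried over verbatim.

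For the second inequality the only new ingredient is the identity $(\rho_1)_t=\int_{\Omega_{\mathrm{TJ}}}f_t\,d\vec a$ for $t>0$; this differentiation under the integral sign is legitimate because the semigroup regularity recorded after \eqref{eq:3.13} gives $g\in C^1((0,\infty);L^2(\Omega\times\Omega_{\mathrm{TJ}},d{\it m}))$, so $f_t=g_t\exp(-E/D)\in L^2(\Omega\times\Omega_{\mathrm{TJ}})$ for each $t>0$. Taking $h=f_t$ and invoking Theorem~\ref{thm:3.2} gives the second estimate. For the third inequality, since the domain $\Omega_{\mathrm{TJ}}$ is independent of $\Delta\vec\alpha$ and $f(\cdot,\cdot,t)\in H^1(\Omega\times\Omega_{\mathrm{TJ}})$ for a.e.\ $t$ (weak-solution regularity), one may commute the weak $\Delta\vec\alpha$-gradient with the $\vec a$-integral, $\nabla^\Omega_{\Delta\vec\alpha}(\rho_1-\rho_{1,\infty})=\int_{\Omega_{\mathrm{TJ}}}\nabla^\Omega_{\Delta\vec\alpha}(f-f_\infty)\,d\vec a$; taking $h=\nabla^\Omega_{\Delta\vec\alpha}(f-f_\infty)$ and using the $\nabla^\Omega_{\Delta\vec\alpha}$-contribution of Theorem~\ref{thm:3.3} (discarding the nonnegative $\nabla_{\vec a}$-term and dividing by $3\gamma D$) gives the third estimate.

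I do not expect a serious obstacle, since Proposition~\ref{prop:4.1} is essentially a corollary of Section~\ref{sec5}. The two points that deserve a careful sentence rather than a routine computation are: (i) justifying the interchange of $\partial_t$, respectively $\nabla^\Omega_{\Delta\vec\alpha}$, with the integration in $\vec a$ — handled by the time regularity of $g$ from the semigroup, respectively by the $H^1$-regularity in space of the weak solution; and (ii) the bookkeeping of the weight, for which the two-sided bound in \eqref{eq:3.32} is precisely what is required. The constants $\Cr{const:4.1},\Cr{const:4.2},\Cr{const:4.3}$ may be taken as explicit multiples of $|\Omega_{\mathrm{TJ}}|\,\Cr{const:3.5}$ by the constants appearing in Theorems~\ref{thm:3.1}--\ref{thm:3.3}, and the decay rate is the same $2\min\{3\gamma,\eta\}D/\Cr{const:3.3}$.
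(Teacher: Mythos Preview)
Your proposal is correct and follows essentially the same approach as the paper: apply Cauchy--Schwarz (H\"older) over $\Omega_{\mathrm{TJ}}$, insert the bounded weight via \eqref{eq:3.32}, and invoke Theorems~\ref{thm:3.1}--\ref{thm:3.3}. In fact you are more careful than the paper, which proves only the first estimate in detail and dismisses the other two with ``similarly''; your remarks on justifying the interchange of $\partial_t$ and $\nabla^\Omega_{\Delta\vec\alpha}$ with the $\vec a$-integral fill in what the paper leaves implicit.
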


\begin{proof}
We get by H\"older's inequality that,
 \begin{equation}
  \begin{split}
   |\rho_1(\Delta\vec{\alpha},t)-\rho_{1,\infty}(\Delta\vec{\alpha},t)|^2
   &=
   \left|
   \int_{\Omega_{\mathrm{TJ}}}
   \left(
   f(\Delta\vec{\alpha},\vec{a},t)
   -
   f_\infty(\Delta\vec{\alpha},\vec{a})
   \right)
   \,d\vec{a}
   \right|^2
   \\
   &\leq
   |\Omega_{\mathrm{TJ}}|
   \int_{\Omega_{\mathrm{TJ}}}
   \left|
   f(\Delta\vec{\alpha},\vec{a},t)
   -
   f_\infty(\Delta\vec{\alpha},\vec{a})
   \right|^2
   \,d\vec{a}.
  \end{split}
 \end{equation}
 Next, note that $\Cr{const:3.4}\leq e^{-\frac{E}{D}}\leq
 \Cr{const:3.5}$ on $\Omega\times\Omega_{\mathrm{TJ}}$, where the
 constants $\Cr{const:3.4}, \Cr{const:3.5}>0$ are defined in
 \eqref{eq:3.32}. Thus, we obtain,
 \begin{equation}
  \int_{\Omega_{\mathrm{TJ}}}
   \left|
    f(\Delta\vec{\alpha},\vec{a},t)
    -
    f_\infty(\Delta\vec{\alpha},\vec{a})
   \right|^2
   \,d\vec{a}
   \leq
   \Cr{const:3.5}
   \int_{\Omega_{\mathrm{TJ}}}
   \left|
    f(\Delta\vec{\alpha},\vec{a},t)
    -
    f_\infty(\Delta\vec{\alpha},\vec{a})
   \right|^2
   \exp \left(\frac{E(\Delta\vec{\alpha},\vec{a})}{D}\right)
   \,d\vec{a}.
 \end{equation}
 Then, using \eqref{eq:3.19}, we have,
 \begin{equation}
  \begin{split}
   &\quad
   \int_{\Omega}
   |\rho_1(\Delta\vec{\alpha},t)-\rho_{1,\infty}(\Delta\vec{\alpha},t)|^2
   \,d\Delta\vec{\alpha} \\
   &\leq
   \Cr{const:3.5}|\Omega_{\mathrm{TJ}}|
   \iint_{\Omega\times\Omega_{\mathrm{TJ}}}
   \left|
   f(\Delta\vec{\alpha},\vec{a},t)
   -
   f_\infty(\Delta\vec{\alpha},\vec{a})
   \right|^2
   \exp \left(\frac{E(\Delta\vec{\alpha},\vec{a})}{D}\right)
   \,d\vec{a}d\Delta\vec{\alpha} \\
   &\leq
   \Cr{const:3.6}
   \Cr{const:3.5}|\Omega_{\mathrm{TJ}}|
   e^{-\frac{2\min\{3\gamma,\eta\}D}{\Cr{const:3.3}}t},
  \end{split}
 \end{equation}
 hence the exponential decay estimate for $\rho_1$ is derived.
 Similarly, the estimates for $(\rho_1)_t$ and
 $\nabla^\Omega_{\Delta\vec{\alpha}}\rho_1$ can be deduced.
\end{proof}

\begin{remark}
 Using the same argument as in the proof of Proposition \ref{prop:4.1}, one
 can obtain similar long-time asymptotics for the probability density
 $\rho_2$. In this work, we are more interested in the analysis of the
 marginal probability density of the misorientations $\Delta\vec{\alpha}$,
 $\rho_1 =\rho_1(\Delta\vec{\alpha}, t)$ due to the relation to the GBCD
 statistical metric.
\end{remark}

Next, we compare $\rho_{1,\infty}$ and the Boltzmann distribution of the
misorientations $\Delta\vec{\alpha}$. We first derive the evolution
equation for the the marginal distribution $\rho_1$.

\begin{proposition}
 Let $f$ be a solution of \eqref{eq:3.8}, and let
 $\rho_1=\rho_1(\Delta\vec{\alpha},t)$ be a marginal distribution defined
 by \eqref{eq:4.1}. Then, $\rho_1$ satisfies,
 \begin{equation}
  \label{eq:4.13} 
  \frac{\partial\rho_1}{\partial t}
   =
   \frac{\beta^2_{\Delta\vec{\alpha}}}{2}
   \Delta_{\Delta\vec{\alpha}}^{\Omega} \rho_1
   -
   \nabla^\Omega_{\Delta\vec{\alpha}}
   \cdot
   \left(
    \int_{\Omega_{\mathrm{TJ}}}
    \left(
     \vec{v}_{\Delta\vec{\alpha}}f
    \right)
    \,d\vec{a}
   \right),
   \quad \vec{a}\in\Omega,\ t>0.
 \end{equation}
\end{proposition}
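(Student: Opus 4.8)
The plan is to derive \eqref{eq:4.13} by integrating the Fokker--Planck equation \eqref{eq:3.8} over the triple-junction variable $\vec{a}$ over $\Omega_{\mathrm{TJ}}$, and checking that the two $\vec{a}$-flux contributions are killed by the no-flux boundary condition on $\Omega\times\partial\Omega_{\mathrm{TJ}}$ while the $\Delta\vec{\alpha}$-operators pass through $\int_{\Omega_{\mathrm{TJ}}}\,d\vec{a}$. This is just the marginalization of the Fokker--Planck equation in the $\vec{a}$ variable.

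Concretely, I would first write the first line of \eqref{eq:3.8} in the conservative form $\partial_t f + \nabla^\Omega_{\Delta\vec{\alpha}}\cdot(\vec{v}_{\Delta\vec{\alpha}}f) + \nabla_{\vec{a}}\cdot(\vec{v}_{\vec{a}}f) = \frac{\beta^2_{\Delta\vec{\alpha}}}{2}\Delta_{\Delta\vec{\alpha}}^{\Omega} f + \frac{\beta^2_{\vec{a}}}{2}\Delta_{\vec{a}} f$ and integrate both sides in $\vec{a}$ over $\Omega_{\mathrm{TJ}}$. For the two terms carrying an $\vec{a}$-derivative, the divergence theorem on $\Omega_{\mathrm{TJ}}$ gives $\int_{\Omega_{\mathrm{TJ}}}\bigl(\nabla_{\vec{a}}\cdot(\vec{v}_{\vec{a}}f) - \frac{\beta^2_{\vec{a}}}{2}\Delta_{\vec{a}}f\bigr)\,d\vec{a} = -\int_{\partial\Omega_{\mathrm{TJ}}}\bigl(\frac{\beta^2_{\vec{a}}}{2}\nabla_{\vec{a}}f - \vec{v}_{\vec{a}}f\bigr)\cdot\vec{\nu}_{\vec{a}}\,d\mathcal{H}^1 = 0$, the last equality being exactly the natural boundary condition on $\Omega\times\partial\Omega_{\mathrm{TJ}}$ in \eqref{eq:3.8}. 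For the remaining three terms, since $\Omega$ and $\Omega_{\mathrm{TJ}}$ are independent variables one interchanges $\int_{\Omega_{\mathrm{TJ}}}\,d\vec{a}$ with $\partial_t$, with $\Delta_{\Delta\vec{\alpha}}^{\Omega}$ and with $\nabla^\Omega_{\Delta\vec{\alpha}}\cdot$; using $\rho_1 = \int_{\Omega_{\mathrm{TJ}}}f\,d\vec{a}$ from \eqref{eq:4.1} this yields $\partial_t\rho_1 + \nabla^\Omega_{\Delta\vec{\alpha}}\cdot\bigl(\int_{\Omega_{\mathrm{TJ}}}\vec{v}_{\Delta\vec{\alpha}}f\,d\vec{a}\bigr) = \frac{\beta^2_{\Delta\vec{\alpha}}}{2}\Delta_{\Delta\vec{\alpha}}^{\Omega}\rho_1$, which is \eqref{eq:4.13} after rearranging.

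The only delicate point is justifying these interchanges: $f$ is only a weak solution and $E$ is not $C^2$ in $\vec{a}$, so I would not rely on pointwise identities. The clean route, and the one I would actually write out, is to stay in the weak formulation \eqref{eq:3.9} and test with functions $\phi = \phi(\Delta\vec{\alpha},t)$ that do not depend on $\vec{a}$. Then $\nabla_{\vec{a}}\phi\equiv 0$, so the two $\vec{a}$-integrands in \eqref{eq:3.9} vanish identically, and Fubini's theorem converts the surviving $\Delta\vec{\alpha}$-integrals into the integrals of $\rho_1$ and of $\int_{\Omega_{\mathrm{TJ}}}\vec{v}_{\Delta\vec{\alpha}}f\,d\vec{a}$ against $\phi$, $\phi_t$ and $\nabla^\Omega_{\Delta\vec{\alpha}}\phi$; here one also uses $\nabla^\Omega_{\Delta\vec{\alpha}}\rho_1 = \int_{\Omega_{\mathrm{TJ}}}\nabla^\Omega_{\Delta\vec{\alpha}}f\,d\vec{a}$, valid because $\nabla^\Omega_{\Delta\vec{\alpha}}f\in L^2(\Omega\times\Omega_{\mathrm{TJ}})$ by the regularity established in Section \ref{sec5}. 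This produces precisely the weak form of \eqref{eq:4.13}, which together with the formal computation above establishes the claim. I expect the boundary-term cancellation to be immediate from \eqref{eq:3.8}; the only real bookkeeping is the Fubini step, and restricting the test functions to be $\vec{a}$-independent removes any need for extra regularity of $f$ or $E$.
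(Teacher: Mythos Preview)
Your proposal is correct and follows essentially the same approach as the paper: integrate the Fokker--Planck equation over $\Omega_{\mathrm{TJ}}$, apply the divergence theorem to the $\vec{a}$-flux terms, and invoke the no-flux boundary condition on $\Omega\times\partial\Omega_{\mathrm{TJ}}$ to make them vanish. The paper's proof stays at the formal level, while your additional step of passing through the weak formulation \eqref{eq:3.9} with $\vec{a}$-independent test functions is a genuine improvement in rigor given the limited regularity of $f$.
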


\begin{proof}
Integrate \eqref{eq:3.8} in
 $\vec{a}\in\Omega_{\mathrm{TJ}}$, hence we obtain,
 \begin{equation}
  \label{eq:4.11} 
   \frac{\partial \rho_1}{\partial t}
   +
   \nabla^\Omega_{\Delta\vec{\alpha}}
   \cdot
   \left(
    \int_{\Omega_{\mathrm{TJ}}}
    \left(
     \vec{v}_{\Delta\vec{\alpha}}f
    \right)
    \,d\vec{a}
   \right)
   +
   \int_{\Omega_{\mathrm{TJ}}}
   \nabla_{\vec{a}}
   \cdot
    \left(
     \vec{v}_{\vec{a}}f
    \right)
   \,d\vec{a}
   =
   \frac{\beta^2_{\Delta\vec{\alpha}}}{2}
   \Delta_{\Delta\vec{\alpha}}^{\Omega} \rho_1
   +
   \frac{\beta^2_{\vec{a}}}{2}
   \int_{\Omega_{\mathrm{TJ}}}
   \Delta_{\vec{a}} f 
   \,d\vec{a}.
 \end{equation}
 Due to the boundary conditions of \eqref{eq:3.8}, it follows,
 \begin{equation}
  \label{eq:4.12}
   \frac{\beta^2_{\vec{a}}}{2}
   \int_{\Omega_{\mathrm{TJ}}}
   \Delta_{\vec{a}} f 
   \,d\vec{a}
   -
   \int_{\Omega_{\mathrm{TJ}}}
   \nabla_{\vec{a}}
   \cdot
    \left(
     \vec{v}_{\vec{a}}f
    \right)
   \,d\vec{a}
   =
   \int_{\partial\Omega_{\mathrm{TJ}}}
   \left(
    \frac{\beta^2_{\vec{a}}}{2}
    \nabla_{\vec{a}} f 
    -
    \vec{v}_{\vec{a}}f
   \right)
   \cdot
   \vec{\nu}_{\vec{a}}
   \,dS_{\vec{a}}
   =
   0
 \end{equation}
 for $\Delta\vec{\alpha}\in\Omega$, where $\vec{\nu}_{\vec{a}}$ is an outer unit
 normal on $\partial\Omega_{\mathrm{TJ}}$, and
 $dS_{\vec{a}}$ is a length element on
 $\partial\Omega_{\mathrm{TJ}}$. From \eqref{eq:4.11} and
 \eqref{eq:4.12}, one can obtain \eqref{eq:4.13}.
\end{proof}
To proceed with the analysis of $\rho_1$, we first consider the Taylor
expansion of the grain boundary energy
$E$ around arbitrarily selected point $\vec{a}_{*}\in\Omega_{\mathrm{TJ}}$, namely,

\begin{equation}\label{eqe1e2}
 E(\Delta\vec{\alpha},\vec{a})
  =
  \sum_{j=1}^3
  \sigma(\Delta^{(j)}\alpha)|\vec{a}-\vec{x}^{(j)}|
  =
  E_1(\Delta\vec{\alpha})
  +
  E_2(\Delta\vec{\alpha},\vec{a}),
\end{equation}
where,
\begin{equation}
 E_1(\Delta\vec{\alpha})
  =
  \sum_{j=1}^3
  \sigma(\Delta^{(j)}\alpha)|\vec{a}_*-\vec{x}^{(j)}|
  , \mbox{ and }
  E_2(\Delta\vec{\alpha},\vec{a})
  =
  E(\Delta\vec{\alpha},\vec{a})
  -
  E_1(\Delta\vec{\alpha}).
\end{equation}
Note that, we formulated the grain boundary energy $E$ in the form above
\eqref{eqe1e2} to investigate effect of the position of the triple
junction $\vec{a}=\vec{a}_{*}$ on the distribution of the
misorientations $\rho_1(\Delta\vec{\alpha},t)$ and its steady-state
distribution $\rho_{1, \infty}(\Delta\vec{\alpha})$.
 
\begin{remark}
 From Proposition \ref{prop:4.1}, marginal distribution $\rho_1$ may not
 converge to the Boltzmann distribution, in general. This is because,
 \begin{equation}
  \label{eq:4.19}
  \rho_{1,\infty}(\Delta\vec{\alpha})
   =
   \left(
    \Cr{const:3.1}
    \int_{\Omega_{\mathrm{TJ}}}
    \exp
    \left(-\frac{E_2(\Delta\vec{\alpha},\vec{a})}{D}\right)
    \,d\vec{a}
   \right)
   \exp
   \left(
    -\frac{E_1(\Delta\vec{\alpha})}{D}
   \right),
 \end{equation}
 and the coefficient of $\exp(-E_1/D)$ generally depend on
 $\Delta\vec{\alpha}$.
\end{remark}

 Using \eqref{eqe1e2}, equation \eqref{eq:4.13} becomes,
\begin{equation}
 \label{eq:4.22}
 \frac{\partial\rho_1}{\partial t}
  =
  \frac{\beta^2_{\Delta\vec{\alpha}}}{2}
  \Delta_{\Delta\vec{\alpha}}^{\Omega} \rho_1
  -
  \nabla^\Omega_{\Delta\vec{\alpha}}
  \cdot
  \left(
   (-3\gamma\nabla_{\Delta\vec{\alpha}}^\Omega E_1(\Delta\vec{\alpha}))
   \rho_1
  \right)
  +
  3\gamma
  \nabla^\Omega_{\Delta\vec{\alpha}}
  \cdot
  \left(
   \int_{\Omega_{\mathrm{TJ}}}
   \left(
    (\nabla_{\Delta\vec{\alpha}}^\Omega E_2(\Delta\vec{\alpha},\vec{a})))
    f
    \right)
   \,d\vec{a}
  \right),
\end{equation}
hence $\rho_1$ satisfies the Fokker-Planck type equation with an
extra term.
Next, we explore the effects of the triple junction position, $\vec{a}=\vec{a}_*$ on \eqref{eq:4.13}.

\begin{remark}
 In \cite{DK:BEEEKT,DK:gbphysrev,
  MR2772123, MR3729587}, Fokker-Planck equation was derived for the
evolution of the GBCD using a novel implementation of the iterative scheme for the Fokker-Planck
equation in terms of the system free energy and a
Kantorovich-Rubinstein-Wasserstein metric. Equation for probability
density of misorientations $\rho_1$,  \eqref{eq:4.13} or
 \eqref{eq:4.22} is a Fokker-Planck type equation which also takes
 into account the
 effect of the mobility of the triple junctions. 
\end{remark}

\begin{remark}
Because of 
\begin{equation}
 \partial_{a_k}|\vec{a}-\vec{x}^{(j)}|
  =
  \frac{a_k-x_k^{(j)}}{|\vec{a}-\vec{x}^{(j)}|},\quad
  \partial_{a_l}\partial_{a_k}|\vec{a}-\vec{x}^{(j)}|
  =
  \frac{1}{|\vec{a}-\vec{x}^{(j)}|}
  \left(
   \delta_{kl}
   -
   \frac{a_k-x_k^{(j)}}{|\vec{a}-\vec{x}^{(j)}|}
   \frac{a_l-x_l^{(j)}}{|\vec{a}-\vec{x}^{(j)}|}
  \right),
\end{equation}
where $\delta_{kl}$ is the Kronecker delta, $\vec{a}=(a_1,a_2)$, and
$\vec{x}^{(j)}=(x_1^{(j)},x_2^{(j)})$, by the Taylor expansion for
$|\vec{a}-\vec{x}^{(j)}|$ around $\vec{a}_*$ we obtain the following
expansion for $E_2$;
\begin{equation}
 \label{eq:4.10}
 \begin{split}
  E_2(\Delta\vec{\alpha},\vec{a})
  &=
  \sum_{j=1}^3
  \sigma(\Delta\alpha^{(j)})
  \left(
  |\vec{a}-\vec{x}^{(j)}|
  -
  |\vec{a}_*-\vec{x}^{(j)}|
  \right) \\
  &=
  \sum_{j=1}^3
  \sigma(\Delta\alpha^{(j)})
  \Bigg(
  \frac{(\vec{a}_*-\vec{x}^{(j)})}{|\vec{a}_*-\vec{x}^{(j)}|}
  \cdot(\vec{a}-\vec{a}_*) \\
  &\qquad
  +
  \frac{1}{2|\vec{a}_*-\vec{x}^{(j)}|}
  \left(
  |\vec{a}-\vec{a}_*|^2
  -
  \left(
  \frac{(\vec{a}_*-\vec{x}^{(j)})}{|\vec{a}_*-\vec{x}^{(j)}|}
  \cdot(\vec{a}-\vec{a}_*)
  \right)^2
  \right)
  +o(|\vec{a}-\vec{a}_*|^2)
  \Bigg)
 \end{split}
\end{equation}
 as $\vec{a}\rightarrow\vec{a}_*$. 
\end{remark}


\subsection{The weighted Fermat-Torricelli point as a triple junction
  point}\label{sec11}
Let $\vec{a}_*$ be the minimizer of $E(\Delta\vec{\alpha},\vec{a})$,
which is called the weighted Fermat-Torricelli point
$\vec{a}_{\mathrm{wFT}}$, for fixed $\Delta\vec{\alpha}\in\Omega$
(cf.~\cite{MR1677397}), that is
\begin{equation}
 \sum_{j=1}^3\sigma(\Delta^{(j)}\alpha)
  |\vec{a}_{\mathrm{wFT}}-\vec{x}^{(j)}|
  =
  \inf_{\vec{a}\in\Omega_{\mathrm{TJ}}}
  \sum_{j=1}^3\sigma(\Delta^{(j)}\alpha)
  |\vec{a}-\vec{x}^{(j)}|
  =
  \inf_{\vec{a}\in\Omega_{\mathrm{TJ}}}
  E(\Delta\vec{\alpha},\vec{a}).
\end{equation}
Let $\psi^{(i)}$ be an angle formed by $\vec{a}-\vec{x}^{(i)}$ and
$\vec{a}-\vec{x}^{(i+1)}$ at the triple junction $\vec{a}$. Now we give
an equivalent condition that the triple junction coincides with
$\vec{a}_{\mathrm{wFT}}$.

\begin{figure}[h]
 \centering
 \includegraphics[width=8cm]{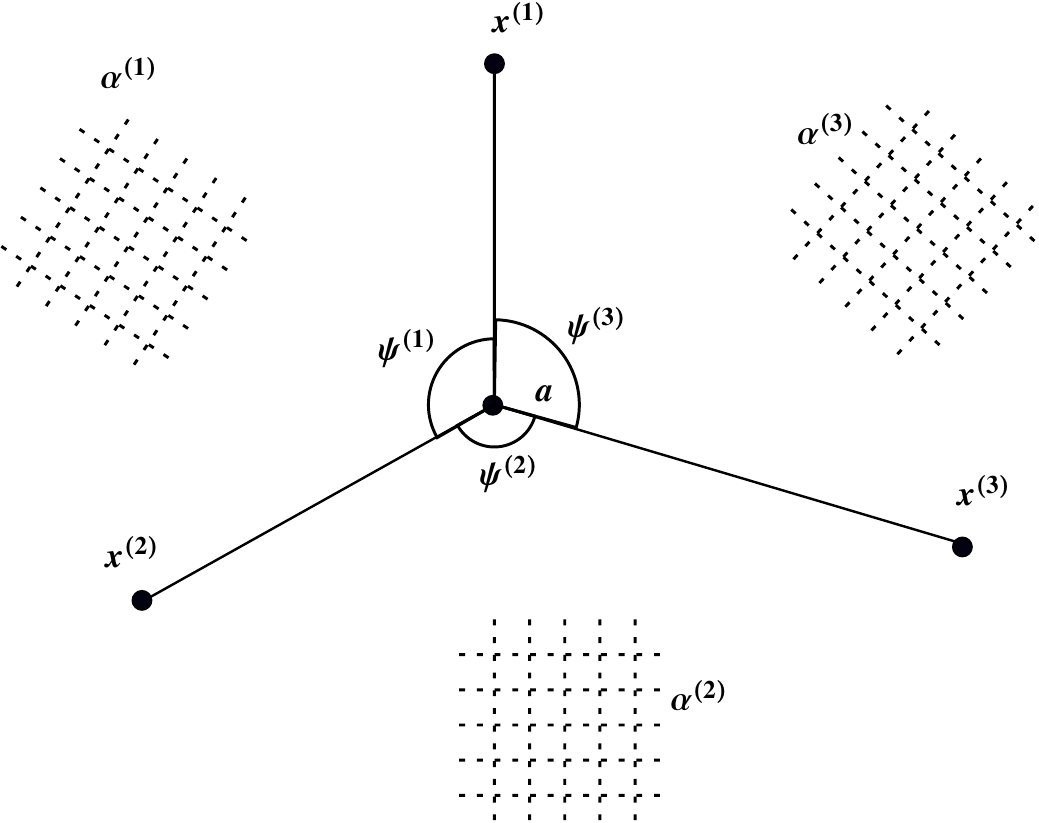}
 \caption{The angles $\psi_1$, $\psi_2$, $\psi_3$ are defined as the above figure.}\label{figTJ1}
\end{figure}

\begin{proposition}
 \label{prop:4.2} Assume that weighted Fermat-Torricelli point
 $\vec{a}_{\mathrm{wFT}}$ does not coincide with $\vec{x}^{(j)}$ for
 $j=1,2,3$. Then, the triple junction coincides with
 $\vec{a}_{\mathrm{wFT}}$, if and only if,
 \begin{equation}
  \label{eq:4.8}
   1-\cos\psi^{(i)}
   =
   \frac{(\sigma(\Delta^{(i)}\alpha)+\sigma(\Delta^{(i+1)}\alpha))^2
   -
   \sigma(\Delta^{(i+2)}\alpha)^2}
   {2\sigma(\Delta^{(i)}\alpha)\sigma(\Delta^{(i+1)}\alpha)},   
 \end{equation}
 for $i=1,2,3$.
\end{proposition}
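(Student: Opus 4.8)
The plan is to reduce everything to the first--order optimality (``balance of forces'') condition for the weighted Fermat--Torricelli point, supplied by Proposition~\ref{prop:2.1}, and then to translate that vector identity into the angle relation \eqref{eq:4.8} via elementary planar trigonometry. Throughout I write $\sigma_j:=\sigma(\Delta^{(j)}\alpha)>0$, and for a point $\vec{a}$ distinct from every $\vec{x}^{(j)}$ I set $\vec{u}^{(j)}:=(\vec{a}-\vec{x}^{(j)})/|\vec{a}-\vec{x}^{(j)}|$, so that $|\vec{u}^{(j)}|=1$ and, by the very definition of $\psi^{(i)}$, $\vec{u}^{(i)}\cdot\vec{u}^{(i+1)}=\cos\psi^{(i)}$ for $i=1,2,3$ (indices cyclic). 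Recall also that $E(\Delta\vec{\alpha},\cdot)=\sum_j\sigma_j|\vec{a}-\vec{x}^{(j)}|$ is a convex function of $\vec{a}$.

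For the ``only if'' direction, assume the triple junction $\vec{a}$ equals $\vec{a}_{\mathrm{wFT}}$. By the standing hypothesis $\vec{a}_{\mathrm{wFT}}\ne\vec{x}^{(j)}$ for each $j$, so $E(\Delta\vec{\alpha},\cdot)$ is differentiable at $\vec{a}_{\mathrm{wFT}}$, and since this point minimizes $E(\Delta\vec{\alpha},\cdot)$ the gradient there vanishes; by Proposition~\ref{prop:2.1} this is the generalized Herring condition
\begin{equation*}
 \sigma_1\vec{u}^{(1)}+\sigma_2\vec{u}^{(2)}+\sigma_3\vec{u}^{(3)}=\vec{0}.
\end{equation*}
Fixing $i$, rewrite this as $\sigma_{i+2}\vec{u}^{(i+2)}=-(\sigma_i\vec{u}^{(i)}+\sigma_{i+1}\vec{u}^{(i+1)})$, take squared Euclidean norms, and use $|\vec{u}^{(j)}|=1$ together with $\vec{u}^{(i)}\cdot\vec{u}^{(i+1)}=\cos\psi^{(i)}$ to get $\sigma_{i+2}^2=\sigma_i^2+\sigma_{i+1}^2+2\sigma_i\sigma_{i+1}\cos\psi^{(i)}$. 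Solving for $\cos\psi^{(i)}$, subtracting from $1$, and collecting $\sigma_i^2+\sigma_{i+1}^2+2\sigma_i\sigma_{i+1}=(\sigma_i+\sigma_{i+1})^2$ gives exactly \eqref{eq:4.8}.

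For the converse, assume \eqref{eq:4.8} holds at the triple junction $\vec{a}$; in particular $\vec{a}\ne\vec{x}^{(j)}$, so the $\vec{u}^{(j)}$ and $\psi^{(i)}$ are defined. Reversing the algebra above, \eqref{eq:4.8} is equivalent to $2\sigma_i\sigma_{i+1}\cos\psi^{(i)}=\sigma_{i+2}^2-\sigma_i^2-\sigma_{i+1}^2$ for $i=1,2,3$. Expanding the squared norm of the Herring vector and using the cross terms $\vec{u}^{(i)}\cdot\vec{u}^{(i+1)}=\cos\psi^{(i)}$,
\begin{equation*}
 \bigl|\sigma_1\vec{u}^{(1)}+\sigma_2\vec{u}^{(2)}+\sigma_3\vec{u}^{(3)}\bigr|^2
 =\sum_{j=1}^3\sigma_j^2+2\sum_{i=1}^3\sigma_i\sigma_{i+1}\cos\psi^{(i)}
 =\sum_{j=1}^3\sigma_j^2+\sum_{i=1}^3\bigl(\sigma_{i+2}^2-\sigma_i^2-\sigma_{i+1}^2\bigr)=0,
\end{equation*}
because the last cyclic sum equals $\sum_j\sigma_j^2-2\sum_j\sigma_j^2=-\sum_j\sigma_j^2$. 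Hence $\nabla_{\vec{a}}E(\Delta\vec{\alpha},\vec{a})=\sigma_1\vec{u}^{(1)}+\sigma_2\vec{u}^{(2)}+\sigma_3\vec{u}^{(3)}=\vec{0}$, so $\vec{a}$ is a critical point of the convex function $E(\Delta\vec{\alpha},\cdot)$, hence a global minimizer, i.e. $\vec{a}=\vec{a}_{\mathrm{wFT}}$.

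The arithmetic is routine; the points that need care are the ones that are not purely computational. First, the characterization ``minimizer $\iff$ vanishing gradient'' must be justified by (a) differentiability of $E(\Delta\vec{\alpha},\cdot)$ at the relevant point — guaranteed by $\vec{a}_{\mathrm{wFT}}\ne\vec{x}^{(j)}$ and by $\vec{a}\ne\vec{x}^{(j)}$ in the converse — and (b) convexity of $E(\Delta\vec{\alpha},\cdot)$, which upgrades ``critical point'' to ``global minimizer''. Second, one should check that the right--hand side of \eqref{eq:4.8} indeed lies in $[0,2]$, i.e. that it is an admissible value of $1-\cos\psi^{(i)}$; this is equivalent to the strict triangle inequality $|\sigma_i-\sigma_{i+1}|<\sigma_{i+2}<\sigma_i+\sigma_{i+1}$ for the surface tensions, which is exactly the condition for the weighted Fermat--Torricelli point to be distinct from all $\vec{x}^{(j)}$ — so it is precisely the standing hypothesis and no extra assumption is needed. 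Finally, if one wants ``$\vec{a}$ coincides with $\vec{a}_{\mathrm{wFT}}$'' to be unambiguous one may record that $E(\Delta\vec{\alpha},\cdot)$ is strictly convex, hence has a unique minimizer, whenever $\vec{x}^{(1)},\vec{x}^{(2)},\vec{x}^{(3)}$ are not collinear. The bookkeeping of the cyclic indices $i\mapsto i+1\mapsto i+2$ is the only place an error is likely to creep in.
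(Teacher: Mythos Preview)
Your proof is correct and follows the same overall strategy as the paper's --- reduce both directions to the force-balance condition $\sum_j\sigma_j\vec{u}^{(j)}=\vec{0}$ and translate via elementary algebra --- but your execution is a bit more streamlined. For the forward direction the paper takes inner products of the balance equation with each $\vec{e}^{(k)}$, obtains a $3\times3$ linear system in the pairwise cosines, and solves it; you instead isolate $\sigma_{i+2}\vec{u}^{(i+2)}$ and take the squared norm, which yields each $\cos\psi^{(i)}$ in one step. For the converse the paper argues that \eqref{eq:4.8} forces the Herring vector to be orthogonal to a spanning pair among the $\vec{e}^{(j)}$ (so must check that some pair is nonparallel), and then invokes uniqueness of the weighted Fermat--Torricelli point from an external reference; your squared-norm identity avoids the spanning check, and your appeal to convexity of $E(\Delta\vec{\alpha},\cdot)$ is self-contained.

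One small imprecision in your closing paragraph: the strict triangle inequality on the $\sigma_j$ is \emph{implied by}, but not in general \emph{equivalent to}, the hypothesis $\vec{a}_{\mathrm{wFT}}\ne\vec{x}^{(j)}$ --- the latter (see \eqref{eq:4.9}) also involves the positions $\vec{x}^{(j)}$. This does not affect your argument, since you only need the implication (which the paper records as \eqref{eq:4.15}).
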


\begin{remark}
 A condition that for $k=1,2,3$,
 \begin{equation}
  \label{eq:4.9}
  \left|
   \sum_{\substack{1\leq j\leq 3, \\j\neq k}}
   \sigma(\Delta^{(j)}\alpha)
   \frac{\vec{x}^{(j)}-\vec{x}^{(k)}}{|\vec{x}^{(j)}-\vec{x}^{(k)}|}
  \right|
   >
   \sigma(\Delta^{(k)}\alpha)
 \end{equation}
 is equivalent to the condition that $\vec{a}_{\mathrm{wFT}}$ does not
 coincide with $\vec{x}^{(j)}$ for $j=1,2,3$ and
 \begin{equation}
  \label{eq:4.3}
   \vec{0}
   =
   \nabla_{\vec{a}}E(\Delta\vec{\alpha},\vec{a}_{\mathrm{wFT}})
   =
   \sum_{j=1}^3
   \sigma(\Delta^{(j)}\alpha)
   \vec{e}^{(j)},
   \quad
   \vec{e}^{(j)}
   :=
   \frac{\vec{a}_{\mathrm{wFT}}-\vec{x}^{(j)}}{|\vec{a}_{\mathrm{wFT}}-\vec{x}^{(j)}|}
 \end{equation}
 holds (See \cite[Theorem 18.37]{MR1677397}). 

 Because, by \eqref{eq:4.9}, 
 \begin{equation*}
  \sigma(\Delta^{(k)}\alpha)
   <
   \left|
    \sigma(\Delta^{(i)}\alpha)
    \frac{\vec{x}^{(i)}-\vec{x}^{(k)}}{|\vec{x}^{(i)}-\vec{x}^{(k)}|}
    +
    \sigma(\Delta^{(j)}\alpha)
    \frac{\vec{x}^{(j)}-\vec{x}^{(k)}}{|\vec{x}^{(j)}-\vec{x}^{(k)}|}
   \right| 
 \end{equation*}
 for different $1\leq i,j,k\leq 3$, 
 one
 can obtain the following condition,
 \begin{equation}
  \label{eq:4.15}
   \sigma(\Delta^{(k)}\alpha)
   <
   \sigma(\Delta^{(i)}\alpha)
   +
   \sigma(\Delta^{(j)}\alpha).
 \end{equation}

\end{remark}

\begin{proof}[Proof of Proposition \ref{prop:4.2}]
 When $\vec{a}_{\mathrm{wFT}}$ does not coincide with $\vec{x}^{(j)}$ for
 $j=1,2,3$, the weighted Fermat-Torricelli point $\vec{a}_{\mathrm{wFT}}$
 satisfies \eqref{eq:4.3}.
 Taking the inner product of \eqref{eq:4.3}  with $\vec{e}^{(k)}$ for
 $k=1,2,3$, we obtain,
\begin{equation}
\label{eq:4.4}
 \left\{
  \begin{aligned}
   \sigma(\Delta^{(2)}\alpha)
   (\vec{e}^{(1)},\vec{e}^{(2)})
   +
   \sigma(\Delta^{(3)}\alpha)
   (\vec{e}^{(3)},\vec{e}^{(1)})
   &=
   -\sigma(\Delta^{(1)}\alpha), \\
   \sigma(\Delta^{(1)}\alpha)
   (\vec{e}^{(1)},\vec{e}^{(2)})
   +
   \sigma(\Delta^{(3)}\alpha)
   (\vec{e}^{(2)},\vec{e}^{(3)})
   &=
   -\sigma(\Delta^{(2)}\alpha),\\
   \sigma(\Delta^{(2)}\alpha)
   (\vec{e}^{(2)},\vec{e}^{(3)})
   +
   \sigma(\Delta^{(1)}\alpha)
   (\vec{e}^{(3)},\vec{e}^{(1)})
   &=
   -\sigma(\Delta^{(3)}\alpha).
  \end{aligned}
 \right.
\end{equation}
 Next, we can solve $(\vec{e}^{(k)},\vec{e}^{(l)})$ from
 \eqref{eq:4.4} and, thus,
 obtain,
 \begin{equation}
  \begin{split}
   (\vec{e}^{(1)},\vec{e}^{(2)})
   &=
   \frac{-(\sigma(\Delta^{(1)}\alpha))^2-(\sigma(\Delta^{(2)}\alpha))^2+(\sigma(\Delta^{(3)}\alpha))^2}{2\sigma(\Delta^{(1)}\alpha)\sigma(\Delta^{(2)}\alpha)}, \\
   (\vec{e}^{(2)},\vec{e}^{(3)})
   &=
   \frac{-(\sigma(\Delta^{(2)}\alpha))^2-(\sigma(\Delta^{(3)}\alpha))^2+(\sigma(\Delta^{(1)}\alpha))^2}{2\sigma(\Delta^{(2)}\alpha)\sigma(\Delta^{(3)}\alpha)}, \\
   (\vec{e}^{(3)},\vec{e}^{(1)})
   &=
   \frac{-(\sigma(\Delta^{(3)}\alpha))^2-(\sigma(\Delta^{(1)}\alpha))^2+(\sigma(\Delta^{(2)}\alpha))^2}{2\sigma(\Delta^{(3)}\alpha)\sigma(\Delta^{(1)}\alpha)}.
  \end{split}
 \end{equation}
 Note that, $(\vec{e}^{(k)},\vec{e}^{(l)})$ is the cosine of the angle
 at $\vec{a}_{\mathrm{wFT}}$ formed by $\vec{e}^{(k)}$ and
 $\vec{e}^{(l)}$. Thus, we have,
 \begin{equation}
  \begin{split}
   \cos\psi^{(1)}
   &=
   \frac{-(\sigma(\Delta^{(1)}\alpha))^2-(\sigma(\Delta^{(2)}\alpha))^2+(\sigma(\Delta^{(3)}\alpha))^2}{2\sigma(\Delta^{(1)}\alpha)\sigma(\Delta^{(2)}\alpha)}, \\
   \cos\psi^{(2)}
   &=
   \frac{-(\sigma(\Delta^{(2)}\alpha))^2-(\sigma(\Delta^{(3)}\alpha))^2+(\sigma(\Delta^{(1)}\alpha))^2}{2\sigma(\Delta^{(2)}\alpha)\sigma(\Delta^{(3)}\alpha)}, \\
   \cos\psi^{(3)}
   &=
   \frac{-(\sigma(\Delta^{(3)}\alpha))^2-(\sigma(\Delta^{(1)}\alpha))^2+(\sigma(\Delta^{(2)}\alpha))^2}{2\sigma(\Delta^{(3)}\alpha)\sigma(\Delta^{(1)}\alpha)},
  \end{split}
 \end{equation}
 hence we arrive at \eqref{eq:4.8} by direct calculation of $1-\cos\psi^{(i)}$. 

 Conversely, when \eqref{eq:4.8} holds, then \eqref{eq:4.4} also
 holds. Using \eqref{eq:4.15}, and any nonparallel pair of
 $\vec{e}^{(j)}$, one can obtain \eqref{eq:4.3}. Since $\vec{a}_{\mathrm{wFT}}$
 is unique~\cite[Theorem 18.37]{MR1677397}, $\vec{a}_{\mathrm{wFT}}$
 coincides with the triple junction.
\end{proof}

\begin{remark}
 The relation \eqref{eq:4.8} is a force balance condition at the
 triple junction,  the generalized Herring condition. When $\sigma\equiv1$, then
 from \eqref{eq:4.8} we have $\cos\psi^{(i)}=-1/2$, hence three angles
 at the triple junction are the same, $\frac{2\pi}{3}$.
\end{remark}

Next, we study the behavior of the reminder term $E_2$ when
$\vec{a}_*=\vec{a}_{\mathrm{wFT}}$. Thanks to
$\nabla_{\vec{a}}E(\Delta\vec{\alpha},\vec{a}_*)=\vec{0}$, one can
obtain the following result,

\begin{proposition}
 \label{prop:4.4}
 Assume that weighted Fermat-Torricelli point $\vec{a}_{\mathrm{wFT}}$
 does not coinside with $\vec{x}^{(j)}$ for $j=1,2,3$. Let
 $\vec{a}_*=\vec{a}_{\mathrm{wFT}}$. Then,
 \begin{equation}
  \label{eq:4.17}
  E_2(\Delta\vec{\alpha},\vec{a})
   =
   \sum_{j=1}^3
   \sigma(\Delta^{(j)}\alpha)
   \left(
    \frac{1}{2|\vec{a}_*-\vec{x}^{(j)}|}
    \left(
     |\vec{a}-\vec{a}_*|^2
     -
     \left(
      \frac{(\vec{a}_*-\vec{x}^{(j)})}{|\vec{a}_*-\vec{x}^{(j)}|}
      \cdot(\vec{a}-\vec{a}_*)
     \right)^2
    \right)
    +o(|\vec{a}-\vec{a}_*|^2)
   \right)
 \end{equation} 
 as $\vec{a}\rightarrow\vec{a}_*$. 
\end{proposition}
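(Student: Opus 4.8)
The plan is to read off \eqref{eq:4.17} from the general second-order Taylor expansion \eqref{eq:4.10} of $E_2$ about $\vec{a}_*$; the only extra input needed is that the choice $\vec{a}_*=\vec{a}_{\mathrm{wFT}}$ annihilates the first-order term of that expansion. In other words, there is essentially nothing to do beyond combining \eqref{eq:4.10} with the Fermat--Torricelli stationarity relation \eqref{eq:4.3}.

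In detail: by the standing hypothesis, $\vec{a}_{\mathrm{wFT}}$ does not coincide with any $\vec{x}^{(j)}$, so each map $\vec{a}\mapsto|\vec{a}-\vec{x}^{(j)}|$ is smooth (indeed real-analytic) near $\vec{a}_*=\vec{a}_{\mathrm{wFT}}$, and hence the expansion \eqref{eq:4.10} holds there. Collecting the first-order terms of \eqref{eq:4.10} over $j$ gives
\begin{equation*}
 \sum_{j=1}^3\sigma(\Delta^{(j)}\alpha)\,\frac{\vec{a}_*-\vec{x}^{(j)}}{|\vec{a}_*-\vec{x}^{(j)}|}\cdot(\vec{a}-\vec{a}_*)
 =
 \Biggl(\sum_{j=1}^3\sigma(\Delta^{(j)}\alpha)\,\vec{e}^{(j)}\Biggr)\cdot(\vec{a}-\vec{a}_*)
 =
 \nabla_{\vec{a}}E(\Delta\vec{\alpha},\vec{a}_*)\cdot(\vec{a}-\vec{a}_*),
\end{equation*}
where $\vec{e}^{(j)}=\frac{\vec{a}_*-\vec{x}^{(j)}}{|\vec{a}_*-\vec{x}^{(j)}|}$ and we used the expression for $\nabla_{\vec{a}}E$ from Proposition \ref{prop:2.1}. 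Since $\vec{a}_*=\vec{a}_{\mathrm{wFT}}$ minimizes $E(\Delta\vec{\alpha},\cdot)$ over $\Omega_{\mathrm{TJ}}$ and $E(\Delta\vec{\alpha},\cdot)$ is differentiable at $\vec{a}_*$ (again because $\vec{a}_*$ avoids all $\vec{x}^{(j)}$), the stationarity relation \eqref{eq:4.3} gives $\nabla_{\vec{a}}E(\Delta\vec{\alpha},\vec{a}_*)=\vec{0}$; hence the above first-order term vanishes and what remains of \eqref{eq:4.10} is precisely \eqref{eq:4.17}.

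I do not anticipate a genuine obstacle: the argument is bookkeeping on top of the already-established expansion \eqref{eq:4.10}. The only point worth a sentence of care is that the remainder genuinely stays $o(|\vec{a}-\vec{a}_*|^2)$ as $\vec{a}\to\vec{a}_*$; this is immediate since, for fixed $\Delta\vec{\alpha}$, the coefficients $\sigma(\Delta^{(j)}\alpha)$ are finite constants and each $|\vec{a}-\vec{x}^{(j)}|$ is $C^2$ near $\vec{a}_*$, so no uniformity in $\Delta\vec{\alpha}$ is being claimed. An equivalent, and perhaps cleaner, route is to observe directly that $\vec{a}\mapsto E_2(\Delta\vec{\alpha},\vec{a})$ is $C^2$ near $\vec{a}_*$ with $E_2(\Delta\vec{\alpha},\vec{a}_*)=0$ and $\nabla_{\vec{a}}E_2(\Delta\vec{\alpha},\vec{a}_*)=\nabla_{\vec{a}}E(\Delta\vec{\alpha},\vec{a}_*)=\vec{0}$, so its Taylor expansion at $\vec{a}_*$ is purely quadratic up to $o(|\vec{a}-\vec{a}_*|^2)$; computing the Hessian of $E_2$ at $\vec{a}_*$ from the second-order derivatives of $|\vec{a}-\vec{x}^{(j)}|$ (recorded in the remark that leads to \eqref{eq:4.10}) then reproduces exactly the explicit second-order term displayed in \eqref{eq:4.17}.
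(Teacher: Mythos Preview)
Your proposal is correct and follows essentially the same approach as the paper: both arguments simply invoke the stationarity relation \eqref{eq:4.3} at $\vec{a}_*=\vec{a}_{\mathrm{wFT}}$ to eliminate the first-order term in the expansion \eqref{eq:4.10}, leaving precisely \eqref{eq:4.17}. Your write-up is more detailed (in particular the remarks on smoothness and the remainder), but the underlying idea is identical.
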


\begin{proof}
 Since $\nabla_{\vec{a}}E(\Delta\vec{\alpha},\vec{a}_*)=\vec{0}$, by
 \eqref{eq:4.3} we have that,
 \begin{equation*}
  \sum_{j=1}^3
   \sigma(\Delta\alpha^{(j)})
   \frac{(\vec{a}_*-\vec{x}^{(j)})}{|\vec{a}_*-\vec{x}^{(j)}|}
   =
   \vec{0}.
 \end{equation*}
 Using this in \eqref{eq:4.10}, we obtain \eqref{eq:4.17}.
\end{proof}

The above Proposition \ref{prop:4.4} is a reason of why we choose
$\vec{a}_{\mathrm{wFT}}$ as $\vec{a}_*$, namely, we can show that $E_2$
is asymptotically of order $|\vec{a}-\vec{a}_*|^2$ as
$\vec{a}\rightarrow\vec{a}_*$.

\subsection{The circumcenter as a triple junction point}\label{sec12}

Next, we introduce the circumcenter $\vec{a}_{\mathrm{cc}}$ of
$\vec{x}^{(j)}$. The
circumcircle of $\vec{x}^{(j)}$ is the unique circle that passes through
all $\vec{x}^{(j)}$, and the circumcenter $\vec{a}_{\mathrm{cc}}$ of
$\vec{x}^{(j)}$ is the center of the circumcircle, namely
\begin{equation}
 \label{eq:4.5}
 |\vec{a}_{\mathrm{cc}}-\vec{x}^{(1)}|
  =
  |\vec{a}_{\mathrm{cc}}-\vec{x}^{(2)}|
  =
  |\vec{a}_{\mathrm{cc}}-\vec{x}^{(3)}|.
\end{equation}
If a triple junction $\vec{a}$ coincides with the circumcenter then, Boltzmann
distribution $\exp(-\frac{E_1}{D})$  becomes Boltzmann
distribution for a grain boundary energy density
$\sigma(\Delta^{(j)}\alpha)$ (instead of Boltzmann distribution for the
grain boundary energy $E$), namely, 
\begin{equation}
 \label{eq:4.21}
  E_1(\Delta\vec{\alpha})
  =
  |\vec{a}_{\mathrm{cc}}-\vec{x}^{(1)}|
  \sum_{j=1}^3\sigma(\Delta^{(j)}\alpha).
\end{equation}
This is reminiscent of the result for the steady-state GBCD which is
given by the
Boltzmann distribution for the grain boundary energy density, see for
instance, \cite{MR3729587, DK:gbphysrev, DK:BEEEKT, MR2772123}. When
$\vec{a}_*$ coincides with $\vec{a}_{\mathrm{cc}}$, from \eqref{eq:4.19}
and \eqref{eq:4.21}, $\rho_{1,\infty}$ is similar to $\exp \left(
-\frac{|\vec{a}_{\mathrm{cc}}-\vec{x}^{(1)}|}{D}
\sum_{j=1}^3\sigma(\Delta^{(j)}\alpha) \right)$.

We now give a relation between the angle at the circumcenter point and
the point
$\vec{x}^{(j)}$.
\begin{proposition}
 If the triple junction coincides with the circumcenter
 $\vec{a}_{\mathrm{cc}}$, then
 \begin{equation}
  \label{eq:4.18}
\frac{1-\cos\psi^{(1)}}{|\vec{x}^{(1)}-\vec{x}^{(2)}|^2}
  =
  \frac{1-\cos\psi^{(2)}}{|\vec{x}^{(2)}-\vec{x}^{(3)}|^2}
  =
  \frac{1-\cos\psi^{(3)}}{|\vec{x}^{(3)}-\vec{x}^{(1)}|^2}.
 \end{equation}
\end{proposition}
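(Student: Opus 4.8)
The plan is to reduce the claim to a single application of the law of cosines in the three triangles having the triple junction as a common vertex. Write $R := |\vec{a}_{\mathrm{cc}} - \vec{x}^{(1)}| = |\vec{a}_{\mathrm{cc}} - \vec{x}^{(2)}| = |\vec{a}_{\mathrm{cc}} - \vec{x}^{(3)}|$ for the circumradius, using \eqref{eq:4.5}. Since the triple junction $\vec{a}$ is assumed to coincide with $\vec{a}_{\mathrm{cc}}$, every segment $\vec{a} - \vec{x}^{(j)}$ has length $R$.

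First I would fix $i \in \{1,2,3\}$ and consider the triangle with vertices $\vec{x}^{(i)}$, $\vec{a}$, and $\vec{x}^{(i+1)}$ (indices mod $3$). By definition $\psi^{(i)}$ is the angle of this triangle at the vertex $\vec{a}$, and the two sides adjacent to that vertex both have length $R$. The law of cosines then gives
\begin{equation*}
 |\vec{x}^{(i)} - \vec{x}^{(i+1)}|^2
 = |\vec{a} - \vec{x}^{(i)}|^2 + |\vec{a} - \vec{x}^{(i+1)}|^2
   - 2\,|\vec{a} - \vec{x}^{(i)}|\,|\vec{a} - \vec{x}^{(i+1)}|\cos\psi^{(i)}
 = 2R^2\bigl(1 - \cos\psi^{(i)}\bigr).
\end{equation*}
Hence $\dfrac{1 - \cos\psi^{(i)}}{|\vec{x}^{(i)} - \vec{x}^{(i+1)}|^2} = \dfrac{1}{2R^2}$ for each $i = 1,2,3$, and since the right-hand side does not depend on $i$, the three ratios in \eqref{eq:4.18} all equal $1/(2R^2)$, which proves the proposition.

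There is essentially no obstacle here: the only point to be careful about is the bookkeeping of the cyclic indices so that $\psi^{(i)}$ is matched with the correct side $\vec{x}^{(i)} - \vec{x}^{(i+1)}$, and the (harmless) degenerate cases where two of the $\vec{x}^{(j)}$ coincide, which are excluded because the $\vec{x}^{(j)}$ are assumed to lie on a genuine circumcircle.
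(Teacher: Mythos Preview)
Your argument is correct and is essentially identical to the paper's own proof: the paper also applies the law of cosines to the triangle $\vec{x}^{(i)},\vec{a}_{\mathrm{cc}},\vec{x}^{(i+1)}$, uses $|\vec{a}_{\mathrm{cc}}-\vec{x}^{(j)}|=R$ to get $|\vec{x}^{(i)}-\vec{x}^{(i+1)}|^2 = 2R^2(1-\cos\psi^{(i)})$, and concludes that each ratio equals $1/(2R^2)$.
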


\begin{proof}
 By the cosine formula, for $i=1,2,3$,
 \begin{equation*}
  \begin{split}
   |\vec{x}^{(i)}-\vec{x}^{(i+1)}|^2
   &=
   |\vec{a}_{\mathrm{cc}}-\vec{x}^{(i)}|^2
   +
   |\vec{a}_{\mathrm{cc}}-\vec{x}^{(i+1)}|^2
   -
   2
   |\vec{a}_{\mathrm{cc}}-\vec{x}^{(i)}|
   |\vec{a}_{\mathrm{cc}}-\vec{x}^{(i+1)}|
   (\vec{e}^{(i)},\vec{e}^{(i+1)}) \\
   &=
   2R^2
   (1-\cos\psi^{(i)}),
  \end{split} 
 \end{equation*}
 where
 $R=|\vec{a}_{\mathrm{cc}}-\vec{x}^{(1)}|=|\vec{a}_{\mathrm{cc}}-\vec{x}^{(2)}|=|\vec{a}_{\mathrm{cc}}-\vec{x}^{(3)}|$. Thus, 
\begin{equation*}
 \frac{1}{2R^2}
  =
  \frac{1-\cos\psi^{(1)}}{|\vec{x}^{(1)}-\vec{x}^{(2)}|^2}
  =
  \frac{1-\cos\psi^{(2)}}{|\vec{x}^{(2)}-\vec{x}^{(3)}|^2}
  =
  \frac{1-\cos\psi^{(3)}}{|\vec{x}^{(3)}-\vec{x}^{(1)}|^2},
\end{equation*}
hence \eqref{eq:4.18} holds.
\end{proof}

Next we look at a necessary condition for
$\vec{a}_{\mathrm{wFT}}=\vec{a}_{\mathrm{cc}}$. By combining the relations
\eqref{eq:4.8} and \eqref{eq:4.18}, we have the following corollary,

\begin{corollary}
 Assume that weighted Fermat-Torricelli point $\vec{a}_{\mathrm{wFT}}$
 does not coincide with $\vec{x}^{(j)}$ for $j=1,2,3$. If the triple
 junction, $\vec{a}_{\mathrm{wFT}}$, and circumcenter
 $\vec{a}_{\mathrm{cc}}$ are all the same, then,
 \begin{equation}
  \label{eq:4.16}
  \begin{split}
   \frac{(\sigma(\Delta^{(1)}\alpha)+\sigma(\Delta^{(2)}\alpha))^2
   -
   \sigma(\Delta^{(3)}\alpha)^2}
   {2\sigma(\Delta^{(1)}\alpha)\sigma(\Delta^{(2)}\alpha)
   |\vec{x}^{(1)}-\vec{x}^{(2)}|^2
   }   
   &=
  \frac{(\sigma(\Delta^{(2)}\alpha)+\sigma(\Delta^{(3)}\alpha))^2
   -
   \sigma(\Delta^{(1)}\alpha)^2}
   {2\sigma(\Delta^{(2)}\alpha)\sigma(\Delta^{(3)}\alpha)
   |\vec{x}^{(2)}-\vec{x}^{(3)}|^2
   } 
   \\  
   &=
   \frac{(\sigma(\Delta^{(3)}\alpha)+\sigma(\Delta^{(1)}\alpha))^2
   -
   \sigma(\Delta^{(2)}\alpha)^2}
   {2\sigma(\Delta^{(3)}\alpha)\sigma(\Delta^{(1)}\alpha)
   |\vec{x}^{(3)}-\vec{x}^{(1)}|^2
   }.   
  \end{split} 
 \end{equation} 
\end{corollary}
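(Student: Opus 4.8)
The plan is simply to overlay the two algebraic identities already established in this section: the generalized Herring condition \eqref{eq:4.8} coming from Proposition \ref{prop:4.2}, and the circumcenter relation \eqref{eq:4.18}. Both hypotheses of the corollary are exactly what is needed to invoke these results. Since the triple junction is assumed to coincide with $\vec{a}_{\mathrm{wFT}}$, and $\vec{a}_{\mathrm{wFT}}$ does not coincide with any $\vec{x}^{(j)}$, Proposition \ref{prop:4.2} applies and gives, for $i=1,2,3$,
\[
 1-\cos\psi^{(i)}
 =
 \frac{(\sigma(\Delta^{(i)}\alpha)+\sigma(\Delta^{(i+1)}\alpha))^2-\sigma(\Delta^{(i+2)}\alpha)^2}{2\sigma(\Delta^{(i)}\alpha)\sigma(\Delta^{(i+1)}\alpha)}.
\]
Since the triple junction is also assumed to coincide with the circumcenter $\vec{a}_{\mathrm{cc}}$, the preceding proposition gives \eqref{eq:4.18}.

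The proof is then a one-line substitution. I would write out \eqref{eq:4.18} with the explicit indices ($\psi^{(1)}$ paired with $|\vec{x}^{(1)}-\vec{x}^{(2)}|^2$, $\psi^{(2)}$ with $|\vec{x}^{(2)}-\vec{x}^{(3)}|^2$, $\psi^{(3)}$ with $|\vec{x}^{(3)}-\vec{x}^{(1)}|^2$) and insert the corresponding expression for $1-\cos\psi^{(i)}$ from the display above into each of the three ratios: $i=1$ produces the first term of \eqref{eq:4.16}, $i=2$ the second, and $i=3$ the third. Equality of these three terms is precisely what \eqref{eq:4.18} asserts, so \eqref{eq:4.16} follows. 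No estimate or limiting argument is involved; the statement is a purely algebraic consequence of the two prior propositions, so the proof is essentially a remark.

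The only point requiring a little care — the closest thing to an obstacle — is the bookkeeping of the cyclic indices: one must check that the angle $\psi^{(i)}$, expressed in \eqref{eq:4.8} through $\sigma(\Delta^{(i)}\alpha)$, $\sigma(\Delta^{(i+1)}\alpha)$, $\sigma(\Delta^{(i+2)}\alpha)$, is the same angle that in \eqref{eq:4.18} sits over $|\vec{x}^{(i)}-\vec{x}^{(i+1)}|^2$. This matches the definition of $\psi^{(i)}$ as the angle formed by $\vec{a}-\vec{x}^{(i)}$ and $\vec{a}-\vec{x}^{(i+1)}$ used throughout Section \ref{sec11}, so the substitution is consistent. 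I would also note that the assumption $\vec{a}_{\mathrm{wFT}}\neq\vec{x}^{(j)}$ is inherited from Proposition \ref{prop:4.2} and, via \eqref{eq:4.15}, ensures the numerators in \eqref{eq:4.16} are positive, so each ratio is well defined.
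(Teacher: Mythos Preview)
Your proposal is correct and matches the paper's approach exactly: the paper simply states that the corollary follows ``by combining the relations \eqref{eq:4.8} and \eqref{eq:4.18},'' which is precisely the substitution you describe, including the cyclic-index bookkeeping.
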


\begin{remark}
 When $\sigma\equiv1$, then the relation \eqref{eq:4.16} gives
 \begin{equation*}
  |\vec{x}^{(1)}-\vec{x}^{(2)}|
   =
   |\vec{x}^{(2)}-\vec{x}^{(3)}|
   =
   |\vec{x}^{(3)}-\vec{x}^{(1)}|,
 \end{equation*}
 hence $\vec{x}^{(1)}$, $\vec{x}^{(2)}$, $\vec{x}^{(3)}$ are vertices of
 some equilateral triangle. Thus, \eqref{eq:4.16} is a more general
  ``geometric'' condition on $\vec{x}^{(j)}$ and the grain boundary
  energy density $\sigma(\Delta \vec{\alpha})$ to observe Boltzmann
  distribution for a grain boundary energy density as a steady-state
  distribution for $\rho_1(\Delta\vec{\alpha} , t)$.
\end{remark}

\section{Numerical Experiments}\label{sec13}
Here, we present several numerical experiments to illustrate
consistency of the proposed stochastic model (\ref{eq:2.9}) with a
grain growth model (\ref{eq:2.2}) applied to a grain boundary network
that undergoes critical/disappearance events, e.g., grain
disappearance, facet/grain boundary disappearance, facet interchange,
splitting of unstable junctions.  We define the total grain
boundary energy of the network, like,
\begin{equation} \label{eq:6.4e}
 E(t)
  =
  \sum_{j}
  \sigma
  (
  \Delta^{(j)}\alpha
  )
  |\Gamma_t^{(j)}|,
\end{equation}
where $\Delta^{(j)}\alpha$ is a misorientation, a difference between the
lattice orientation of the two neighboring grains which form the grain
boundary $\Gamma^{(j)}_t$. Then, the energetic variational principle
implies,
\begin{equation}
 \label{eq:6.4n}
  \left\{
  \begin{aligned}
   v_n^{(j)}
   &=
   \mu
   \sigma
   (
   \Delta^{(j)}\alpha
   )
   \kappa^{(j)},\quad\text{on}\ \Gamma_t^{(j)},\ t>0, \\
   \frac{d\alpha^{(k)}}{dt}
   &=
   -\gamma
   \frac{\delta E}{\delta \alpha^{(k)}},
   \\
   \frac{d\vec{a}^{(l)}}{dt}
   &=
   \eta
   \sum_{\vec{a}^{(l)}\in\Gamma^{(j)}_t}
   \left(\sigma(\Delta^{(j)}\alpha)\frac{\vec{b}^{(j)}}{|\vec{b}^{(j)}|}\right),
   \quad t>0.
  \end{aligned}
  \right.
\end{equation}
First, we will test ``generalized'' Herring condition
(\ref{eq:4.8}), as well as relations (\ref{eq:4.18}) and
(\ref{eq:4.16}) for the grain boundary network (\ref{eq:6.4n}). Next,
in our numerics, using grain boundary character
distribution (GBCD) statistics (see for example, \cite{DK:BEEEKT,DK:gbphysrev,
  MR2772123, MR3729587}),  we will
illustrate  that the grain
growth system (\ref{eq:6.4n}) exhibits some fluctuation-dissipation principles (see
Section \ref{sec4}). 
\par Therefore, to verify first ``generalized'' Herring condition (\ref{eq:4.8}), we define
ratio $R_1$,
\begin{equation}\label{eqR1}
  R_1: =
   \frac{(\sigma(\Delta^{(i)}\alpha)+\sigma(\Delta^{(i+1)}\alpha))^2
   -
   \sigma(\Delta^{(i+2)}\alpha)^2}
   {2
     (1-\cos\psi^{(i)})\sigma(\Delta^{(i)}\alpha)\sigma(\Delta^{(i+1)}\alpha)},
   i=1, 2, 3, ...   
 \end{equation}
To verify relations  (\ref{eq:4.18}) and (\ref{eq:4.16}), we define
ratio $R_2$ and $R_3$ respectively for each triple junction $\vec{a}$,
\begin{equation}\label{eqR2}
  R_2: = \frac{ |\vec{x}^{(3)}-\vec{x}^{(1)}|^2\sqrt{(1-\cos\psi^{(1)})(1-\cos\psi^{(2)})}}{(1-\cos\psi^{(3)}) |\vec{x}^{(1)}-\vec{x}^{(2)}||\vec{x}^{(2)}-\vec{x}^{(3)}|},
\end{equation}
and 
\begin{equation}\label{eqR3}
  R_3: =\frac{\sqrt{\mathcal{R}_1 \cdot \mathcal{R}_2}}{\mathcal{R}_3} ,
\end{equation}
where $\mathcal{R}_i:=\frac{(\sigma(\Delta^{(i)}\alpha)+\sigma(\Delta^{(i+1)}\alpha))^2
   -
   \sigma(\Delta^{(i+2)}\alpha)^2}
   {2|\vec{x}^{(i)}-\vec{x}^{(i+1)}|^2\sigma(\Delta^{(i)}\alpha)\sigma(\Delta^{(i+1)}\alpha)}$
   and $\vec{x}^{(i)}\neq \vec{a}$ are any node along grain boundary
   with triple junction $\vec{a}$. Note that formulas for $R_2$ and
   $R_3$ (\ref{eqR2})-(\ref{eqR3}) require selection of the node $x_i$
   along the grain boundary different from the triple junction
   $\vec{a}$, see for example Figs.~\ref{figTJ2} and \ref{figTJ1}.
   Note also that for $j=1, 2,3$, $R_j$ is a dimensionless quantity
   with respect to the length of grain boundaries. If the formula
   \eqref{eq:4.8}, \eqref{eq:4.18}, or \eqref{eq:4.16} holds, then
   $R_j=1$ ($j=1,2,3$), respectively. Since 
   \eqref{eq:4.8}, \eqref{eq:4.18}, and \eqref{eq:4.16} are local
   relations (and not the property of the network), in
   our numerical experiments we compute probability densities for
   $R_1$, as well as for
   $R_2$ and $R_3$ (using two choices of the node $x_i$ to compute $R_2$ and $R_3$). In
   Figs.~\ref{fig2}--\ref{fig3}, \ref{fig5}, \ref{fig7}, \ref{fig8}
   (left plot) and \ref{fig10} (left and middle plots) we selected
   $x_i$ to be a mesh node on the grain boundary which is the closest
   to the triple junction $\vec{a}$ (note, we discretize each grain
   boundary using linear line segments, hence, end points of these
   line segments form mesh nodes on each grain boundary). As a second
   choice for the node $x_i$, see Fig.~\ref{fig4}, we selected $x_i$
   to be the other end point of the grain boundary/the ``other triple
   junction'' (different from the triple junction of $\vec{a}$) of the
   considered grain boundary that shares $\vec{a}$. As our results show,
   choice of $x_i$ affects the distributions for $R_2$ and
   $R_3$. However, the choice of $x_i$ does not affect consistency
   property  reflected by distributions for $R_2$ and $R_3$ between
   developed stochastic model (\ref{eq:2.9}) and the simulated grain
   growth system (\ref{eq:6.4n}), see Figs.~\ref{fig2}--\ref{fig3}, \ref{fig5}, \ref{fig7}, \ref{fig8}
   (left plot), \ref{fig10} (left and middle plots) and Fig.~\ref{fig4}.
\par Further,
we will investigate the distribution of the grain boundary character
distribution (GBCD) $\rho(\Delta ^{(j)} \alpha)$ at $T_{\infty}$
($T_{\infty}$ is defined below),  and we will use GBCD
to illustrate  that the grain
growth system (\ref{eq:6.4n}) exhibits some fluctuation-dissipation principles (see
Section \ref{sec4}).
The GBCD (in our context) is an empirical
statistical measure of the relative length (in 2D) of the grain
boundary interface with a given lattice misorientation,
\begin{eqnarray}
&\rho(\Delta ^{(j)} \alpha, t) =\mbox{ relative length of interface of
  lattice misorientation } \Delta ^{(j)} \alpha \mbox{ at time }t,\nonumber\\
&\mbox{ normalized so that } \int_{\Omega_{\Delta ^{(j)} \alpha}} \rho
  d \Delta ^{(j)} \alpha=1, \label{eq:7.3}
\end{eqnarray}
where we consider $\Omega_{\Delta ^{(j)} \alpha}=[-\frac{\pi}{4},
\frac{\pi}{4}]$ in the numerical experiments below (for planar grain
boundary network, it is reasonable to consider such range for the
misorientations). For more details, see for example \cite{DK:gbphysrev}. In all our tests below,
we compare  GBCD at $T_{\infty}$ to the stationary solution of the
Fokker-Planck equation, the  Boltzmann distribution for the grain
boundary energy density $\sigma
  (
  \Delta^{(j)}\alpha
  )$,
\begin{equation} \begin{aligned} \label{eq:7.4}
& \rho_D(\Delta^{(j)}\alpha) = \frac{1}{Z_D}e^{-\frac{\sigma(\Delta^{(j)}\alpha)}{D}}, \\
& \textrm{with partition function, i.e.,normalization factor} \\
&Z_{D} = \int_{\Omega_{\Delta^{(j)}\alpha}}
e^{-\frac{\sigma(\Delta^{(j)}\alpha)}{D}}d\Delta^{(j)} \alpha,
\end{aligned}\end{equation}
\cite{DK:BEEEKT,DK:gbphysrev,MR2772123,MR3729587} and see Section~\ref{sec10}. We employ
Kullback-Leibler relative entropy test to obtain a unique
``temperature-like'' parameter $D$ and to construct the corresponding
Boltzmann distribution for the GBCD at $T_{\infty}$ as it was originally
done in
\cite{DK:BEEEKT,DK:gbphysrev,MR2772123,MR3729587}. Kullback-Leibler
(KL) relative entropy test
\cite{DK:BEEEKT,DK:gbphysrev,MR2772123,MR3729587} is based on the idea
that if we know that  the GBCD $\rho(\Delta ^{(j)} \alpha, t)$ evolves according to the Fokker-Planck
equation, then it must converge
exponentially fast to  $\rho_D(\Delta ^{(j)}\alpha)$  in KL relative entropy as $t\to \infty$.
Note, that the GBCD is a primary candidate to characterize texture of the grain boundary
network, and is inversely related to the grain boundary energy density
as discovered in experiments and simulations.
The
reader can consult, for example, \cite{DK:BEEEKT,DK:gbphysrev,
  MR2772123, MR3729587} for more details about GBCD and the theory of
the GBCD.
 In the numerical experiments in this paper, we consider the
grain boundary energy density as plotted in Fig.~\ref{gbend} and given below,
\[\sigma(\Delta^{(j)} \alpha)=1+0.25\sin^2(2\Delta^{(j)} \alpha).\]
\par  Our simulation of 2D grain
boundary network \cite{Katya-Chun-Mzn2} is a further extension of the algorithm based on sharp interface
approach  \cite{MR2772123,MR3729587} (note, that in
\cite{MR2772123,MR3729587},  only Herring conditions at
triple junctions were considered, i.e., $\eta\to \infty$, and dynamic
orientations/misorientations (``rotation of grains'') was absent, i.e.,
$\gamma=0$). We recall that in the numerical scheme
we work with a variational principle. 
\begin{figure}[hbtp]
\centering
\vspace{-2.0cm}
\includegraphics[width=3.0in]{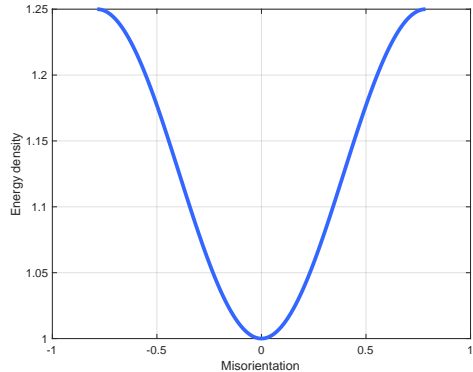}
\vspace{-2.0cm}
\caption{\footnotesize Grain boundary energy density function $\sigma(\Delta \alpha)$.}\label{gbend}
\end{figure}
The cornerstone of the algorithm, which assures its
stability, is the discrete dissipation inequality for the total grain boundary energy that holds when
either the discrete Herring boundary condition ($\eta \to \infty$) or  discrete ``dynamic boundary
condition'' (finite mobility $\eta$ of the triple junctions,  third
equation of (\ref{eq:6.4n})) is
satisfied at the triple junctions. We also recall that in the numerical algorithm we impose
Mullins theory (first equation of (\ref{eq:6.4n})) as the local
evolution law for the grain boundaries (and the relaxation time scale $\mu$ is
kept finite).  For more details about computational model
based on Mullins equations (curvature driven growth), the reader can
consult, for example \cite{MR2772123,MR3729587,Katya-Chun-Mzn2}. In addition, in our
final test Fig.~\ref{fig10}, we also
compare results of ``curvature model''  ($\mu$ is finite)
(\ref{eq:6.4n}) with a
results of ``vertex model'' ($\mu \to \infty$), grain boundaries are
straight lines, and hence, only second and third
evolution equations of  (\ref{eq:6.4n}) are considered for the vertex
model, namely model (\ref{eq:2.3}) which is applied to the grain boundary
network is studied.

\par In all the numerical tests below we initialized our system with
$\mathcal{N}$ grains cells/grains with normally distributed misorientation angles at
initial time $t=0$. We also assume that the final time of the
simulations $T_{\infty}$ is the time when
approximately $80\%$ of grains disappeared from the system. The final time is
selected based on the system with no dynamic misorientations
($\gamma=0$) and with Herring condition at the triple junctions ($\eta
\to \infty$)
and, it is selected to ensure that statistically significant number of grains still remain
in the system and
the system reached its statistical stead-state. Therefore, all the
numerical results which are presented below are for the grain boundary
system that undergoes critical/disappearance events. We also denote by
$T_0$ the initial time (before first time step) and by $T_1$ we denote a
time after a first time step.
\par First, we consider grain growth model with curvature
(\ref{eq:6.4n}) and we study three systems with $\mathcal{N}=10000$
initial grains, the first system has $\gamma=10$ and $\eta=100$, the
second system has  $\gamma=100$ and $\eta=1000$, and the third system
has $\gamma=1000$ and $\eta\to \infty$ (Herring condition). We check ``generalized'' Herring condition formula
(\ref{eq:4.8}) by computing probability density for ratio $R_1$,
(\ref{eqR1}) and by computing time evolution of frequency of dihedral
angles that satisfy ratio $R_1$ with $0.01$ accuracy.  The
results for $R_1$ are plotted on Fig.~\ref{fig1}. We observe that all three
distributions of $R_1$ (left and middle plots) for all three grain growth systems have peak at $1$
which is consistent with the ``generalized'' Herring condition formula
(\ref{eq:4.8}). In addition, larger values of $\gamma$ and of $\eta$
provide a higher accuracy for ratio $R_1$ and, in addition, 
produce a higher peak of the distribution at $1$. The distribution
of $R_1$ for system with $\gamma=1000$ and $\eta\to \infty$ (Herring
condition) looks like a delta function positioned at $1$ which is
again consistent with results for the developed stochastic model
Sections \ref{sec4}-\ref{sec10}. Next, we check relations
(\ref{eq:4.18}) and (\ref{eq:4.16}) for the same three grain growth
systems (\ref{eq:6.4n}) by computing probability densities for ratio
$R_2$ and $R_3$, (\ref{eqR2})-(\ref{eqR3}). The results are presented
in Figs.~\ref{fig2}--\ref{fig5}. Again, we observe that the peaks of
the distributions for $R_2$ and $R_3$ for all three systems are near
$1$. Moreover, the agreement between distributions $R_2$ and $R_3$ is
better for grain growth systems with larger values of $\gamma$ and
$\eta$ ( for $\gamma=1000$ and $\eta\to \infty$, the plots for $R_2$
and $R_3$ are almost indistinguishable, see Figs.~\ref{fig3} (left
plot) and \ref{fig4} (right plot)), which is again consistent with a
developed theory, see Section \ref{sec10}. In addition, on
Fig.~\ref{fig5}, we illustrate how distribution for ratio $R_3$
evolves with time for grain growth system with $\gamma=10$ and
$\eta=100$, Fig.~\ref{fig5} (left plot) and with $\gamma=1000$ and
$\eta\to\infty$ (Herring condition) Fig.~\ref{fig5} (right
plot). The results illustrate that the distributions are ``defined'' by the
grain growth evolution equations and not by the initial
distribution. Finally, in the last test for the considered three grain
growth systems, we compute GBCD statistics at time
$T_{\infty}$. First, we
observe that the GBCD at $T_{\infty}$ is well-approximated by the Boltzmann
distribution for the grain boundary energy density see
Fig.~\ref{fig6}, which is consistent with the theory developed
in the work \cite{DK:BEEEKT,DK:gbphysrev,
  MR2772123, MR3729587} and is consistent with the stochastic model and theory
developed in this work, Sections \ref{sec4}-\ref{sec10}. Furthermore, as concluded from our numerical
results Fig.~\ref{fig6},  grain growth systems with larger values
of $\gamma$ and $\eta$, give smaller diffusion
coefficient/''temperature''-like parameter $D$ for the GBCD at $T_{\infty}$, and hence higher GBCD
peak near misorientation $0$. This is in agreement with
dissipation-fluctuation relations (\ref{eq:2.10}), Section \ref{sec4}.
\par Next, we consider grain growth systems with different number of
grains at initial time $T_0$, Figs.~\ref{fig7}-\ref{fig8}. Namely, we
consider grain growth systems (\ref{eq:6.4n}) with $\mathcal{N}=1000$,
$\mathcal{N}=2500$, $\mathcal{N}=10000$ and with $\mathcal{N}=20000$
grains initially, at time $T_0$. For these systems, we assume no
dynamic misorientation ($\gamma=0$) and Herring condition ($\eta\to
\infty$) at the triple junctions. From the results, Fig.~\ref{fig7}
and \ref{fig8} (middle and right plots) we observe that distributions
for $R_2$, $R_3$ and GBCD exhibit convergence to limiting
distributions with increase in $\mathcal{N}$. In addition, result on
Fig.~\ref{fig8} (left plot), indicates that there is a closer
agreement between distributions $R_2$ and $R_3$ for larger value of
misorientation parameter
$\gamma$. Again, this is consistent with the developed theory, Section
\ref{sec10}. In Fig.~\ref{fig9}, we investigate effect of the mobility
of the triple junctions $\eta$ on the GBCD, however we do not observe
as much effect of $\eta$ on the GBCD as we observed for the
misorientation parameter $\gamma$, see Figs.~\ref{fig6} and
\ref{fig9}. This can be due to more profound effect of the
interactions among triple junctions/correlations effects among triple
junctions that should be taken into account as a part of future
extension of the proposed stochastic model (\ref{eq:2.9}).
\par Finally, in the last test,  Fig.~\ref{fig10}, we 
compare results of ``curvature model''  ($\mu$ is finite) (\ref{eq:6.4n}) with a
results of ``vertex model'' ($\mu \to \infty$), grain boundaries are
straight lines, and hence, only second and third
evolution equations of  (\ref{eq:6.4n}) are considered for the vertex
model, namely model (\ref{eq:2.3}) which is applied to the grain boundary
network is studied. As can be seen from results in Fig.~\ref{fig10},
there is not much effect on the GBCD. However, we observe significant
effect on the distributions of $R_2$ and $R_3$, Fig.~\ref{fig10} (left
and middle plots), namely ``curvature model'' appears to be in closer
agreement with the developed stochastic model  (\ref{eq:2.9}) than
``vertex model''.  This again highlights the importance of
correlations and their effects on grain growth.
Therefore, as a part of future work, we will study
interactions/correlations and their effects on coarsening in
polycrystalline materials. 

\begin{figure}[hbtp]
\centering
\vspace{-1.8cm}
\includegraphics[width=2.1in]{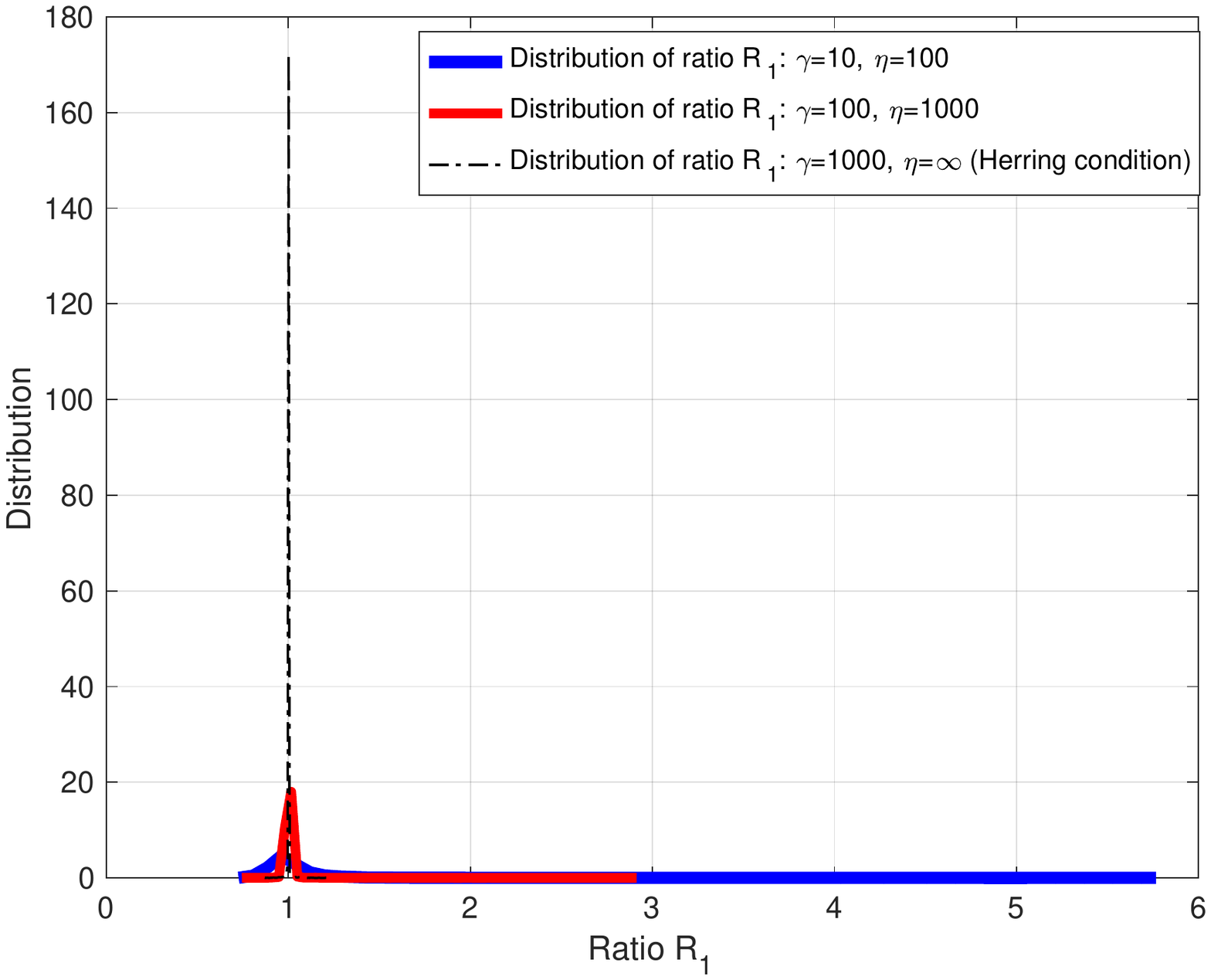}\hspace{-0.7cm}
\includegraphics[width=2.1in]{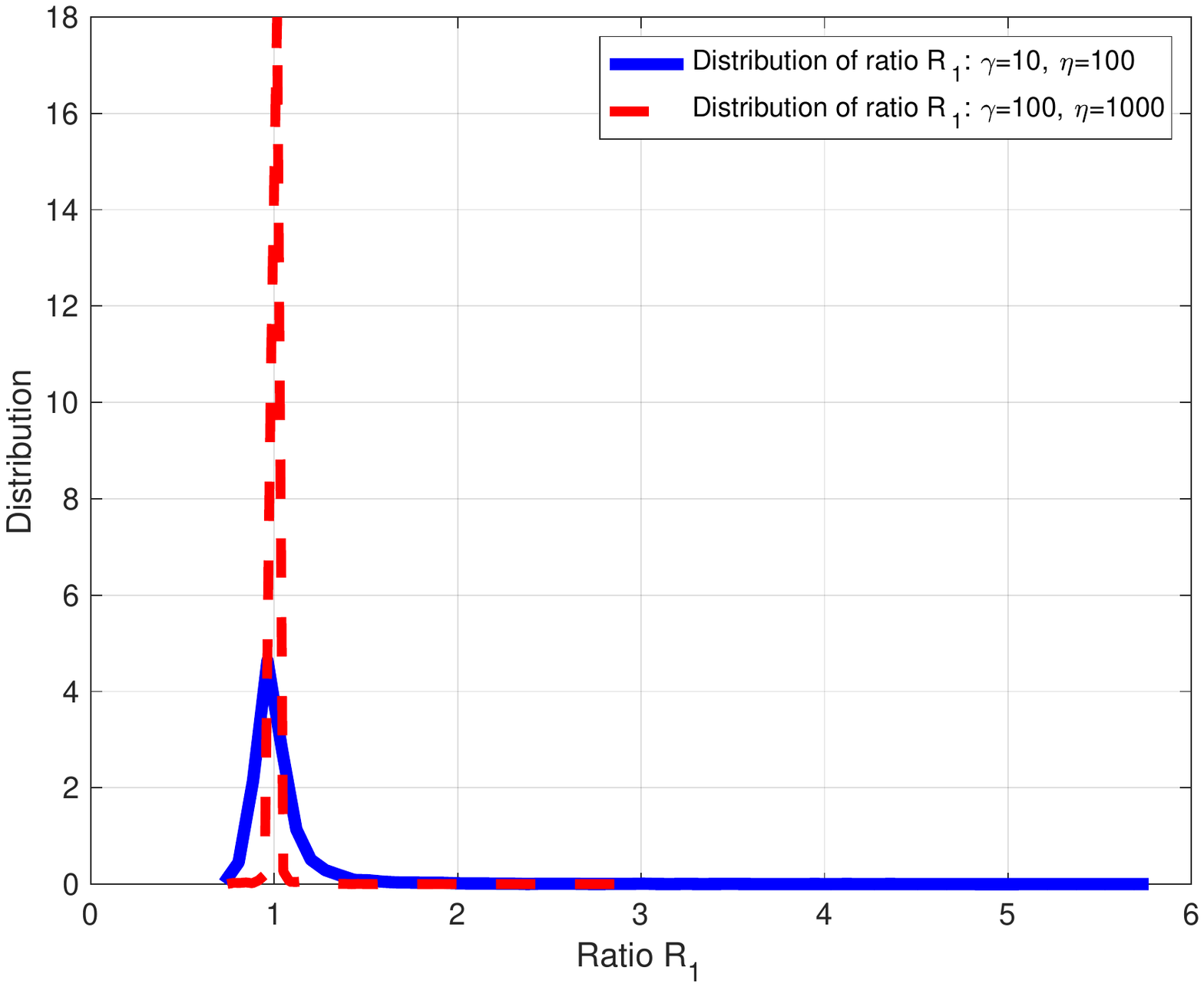}\hspace{-0.7cm}
\includegraphics[width=2.1in]{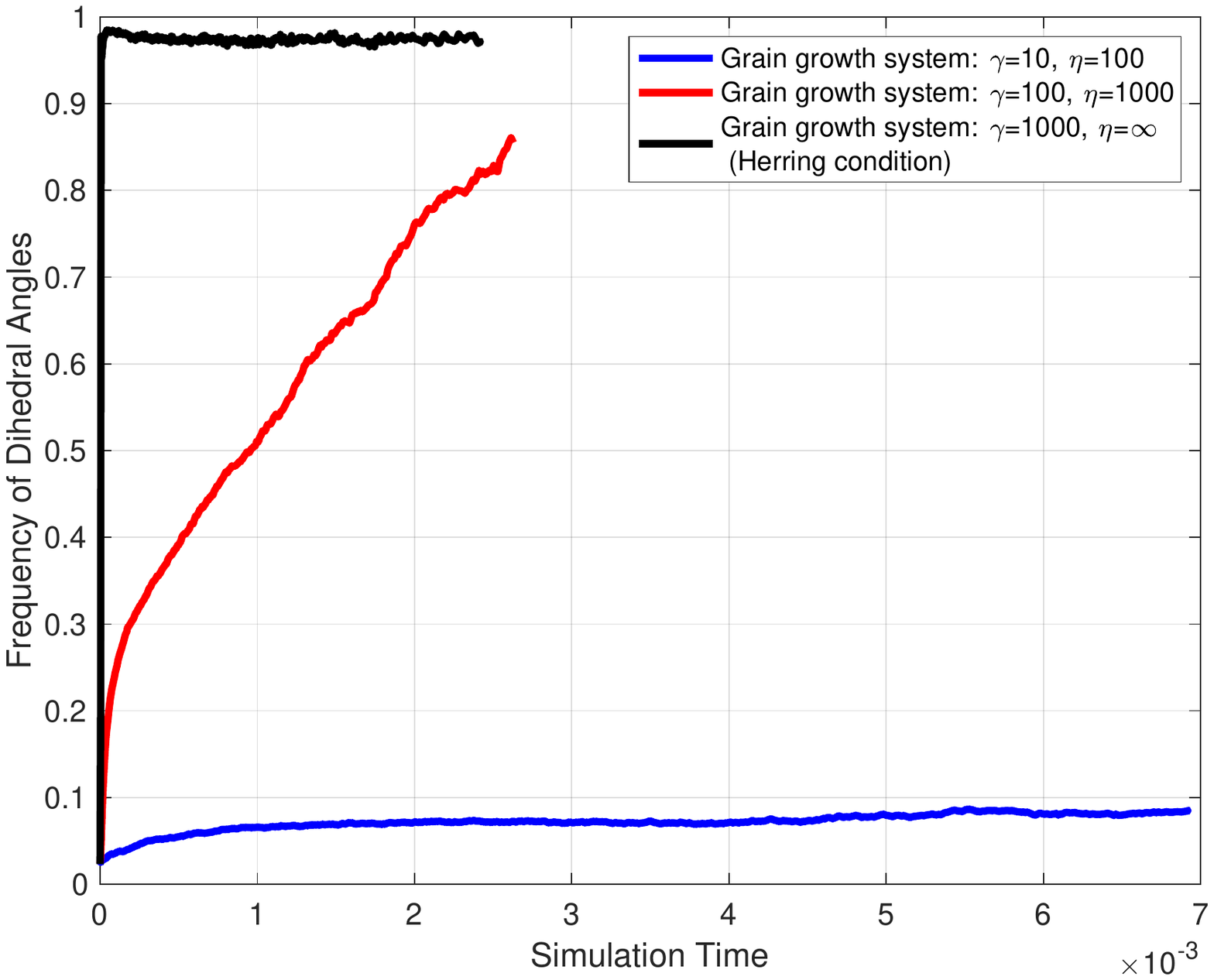}
\vspace{-1.8cm}
\caption{\footnotesize Grain growth system (\ref{eq:6.4n}) with finite
  $\mu$ (with curvature), one run of $2$D trial with $10000$ initial
  grains: {\it (a) Left plot,} distribution of ratio $R_1$ (\ref{eqR1})
  for grain growth systems with mobility of triple junctions $\eta=100$
  and the misorientation parameter $\gamma=10$ (solid blue),  with mobility of triple junctions $\eta=1000$
  and the misorientation parameter $\gamma=100$ (solid red) and with
  mobility of triple junctions $\eta\to \infty$ (Herring condition)
  and the misorientation parameter $\gamma=1000$ (dashed point black).
 {\it (b) Middle plot}, comparison of the two distributions of ratio $R_1$ (\ref{eqR1})
  for grain growth systems with mobility of triple junctions $\eta=100$
  and the misorientation parameter $\gamma=10$ (solid blue) and with mobility of triple junctions $\eta=1000$
  and the misorientation parameter $\gamma=100$ (dashed red). The
  distributions are plotted at $T_{\infty}$.  {\it (c) Right plot},
  time evolution of
  frequency of dihedral angles that satisfy ratio $R_1$ with $0.01$
  accuracy  for grain growth systems with mobility of triple junctions $\eta=100$
  and the misorientation parameter $\gamma=10$ (solid blue),  with mobility of triple junctions $\eta=1000$
  and the misorientation parameter $\gamma=100$ (solid red) and with
  mobility of triple junctions $\eta\to \infty$ (Herring condition)
  and the misorientation parameter $\gamma=1000$ (solid black).}\label{fig1}
\end{figure}

\begin{figure}[hbtp]
\centering
\vspace{-1.8cm}
\includegraphics[width=3.0in]{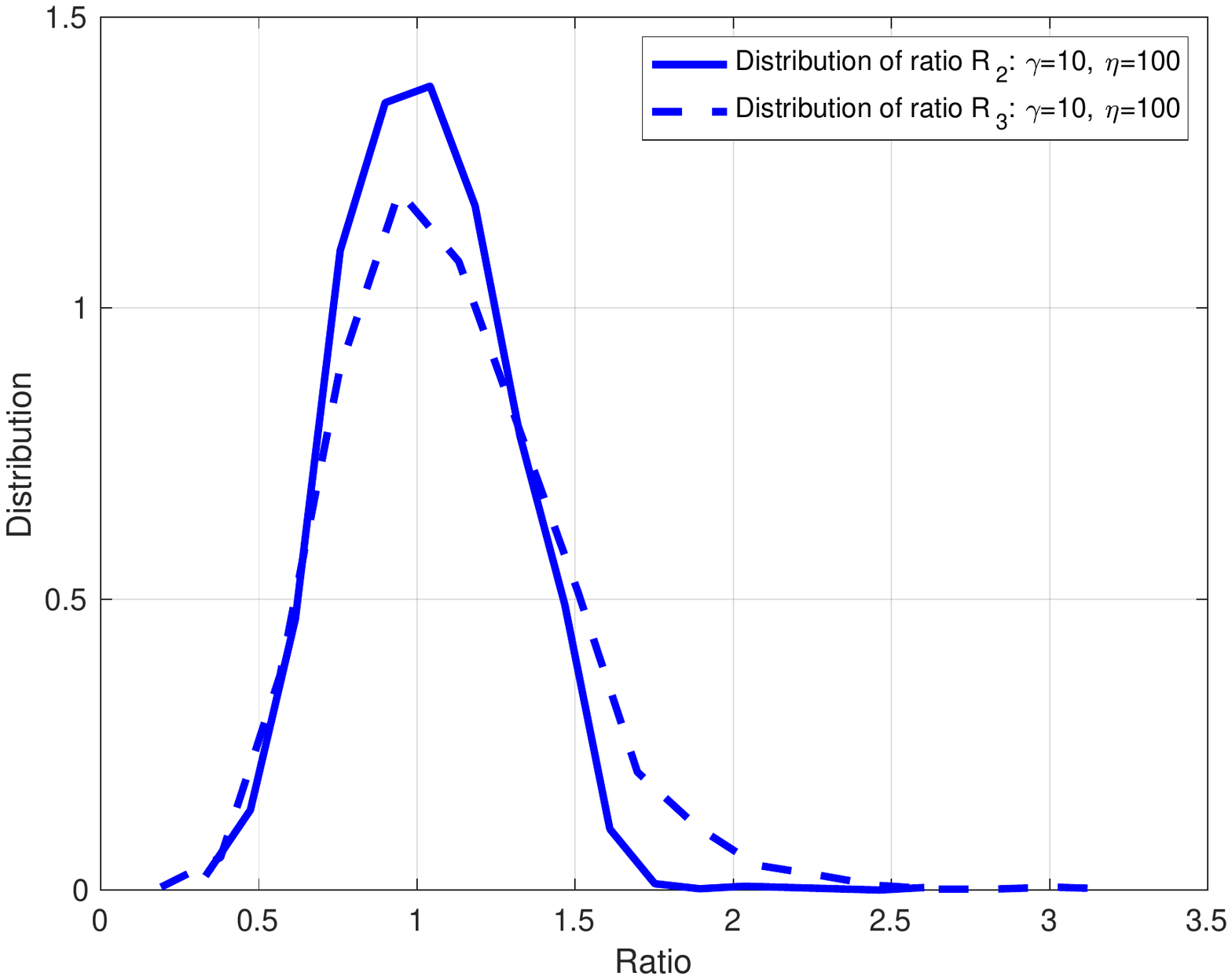}
\includegraphics[width=3.0in]{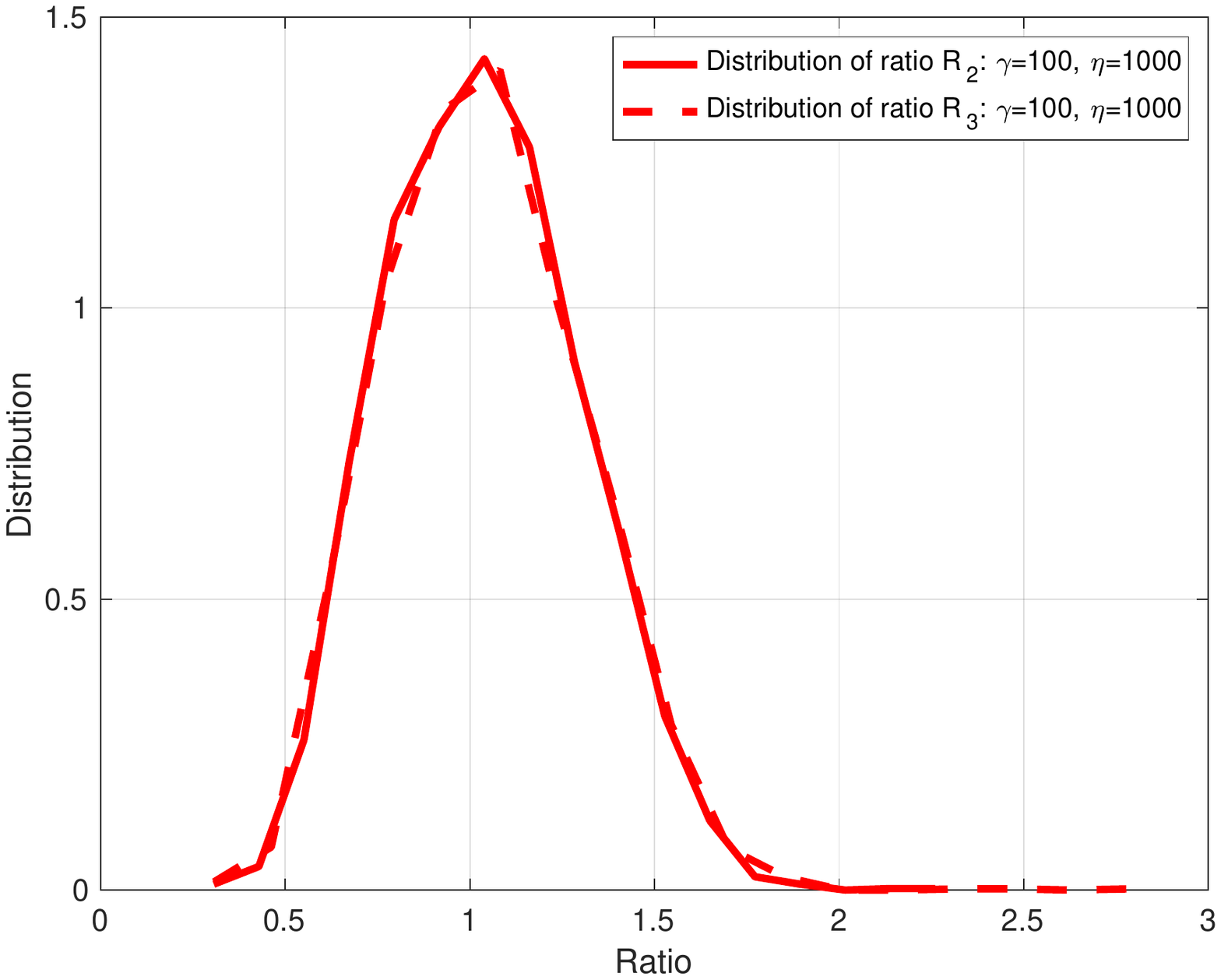}
\vspace{-1.8cm}
\caption{\footnotesize Grain growth system (\ref{eq:6.4n}) with finite
  $\mu$ (with curvature), one run of $2$D trial with $10000$ initial
  grains:  {\it (a) Left plot,} comparison of distributions of ratio
  $R_2$  (\ref{eqR2}) (solid blue) and $R_3$  (\ref{eqR3}) (dashed blue)
  for grain growth system with mobility of triple junctions $\eta=100$
  and the misorientation parameter $\gamma=10$.
 {\it (b) Right plot}, comparison of distributions of ratio
  $R_2$  (\ref{eqR2}) (solid red) and $R_3$  (\ref{eqR3}) (dashed red)
  for grain growth system with mobility of triple junctions $\eta=1000$
  and the misorientation parameter $\gamma=100$. The closest mesh node
  of the grain boundary to
  the triple junction $\vec{a}$ is used as
  $x_i$. The distributions are plotted at $T_{\infty}$.}\label{fig2}
\end{figure}

\begin{figure}[hbtp]
\centering
\vspace{-1.8cm}
\includegraphics[width=3.0in]{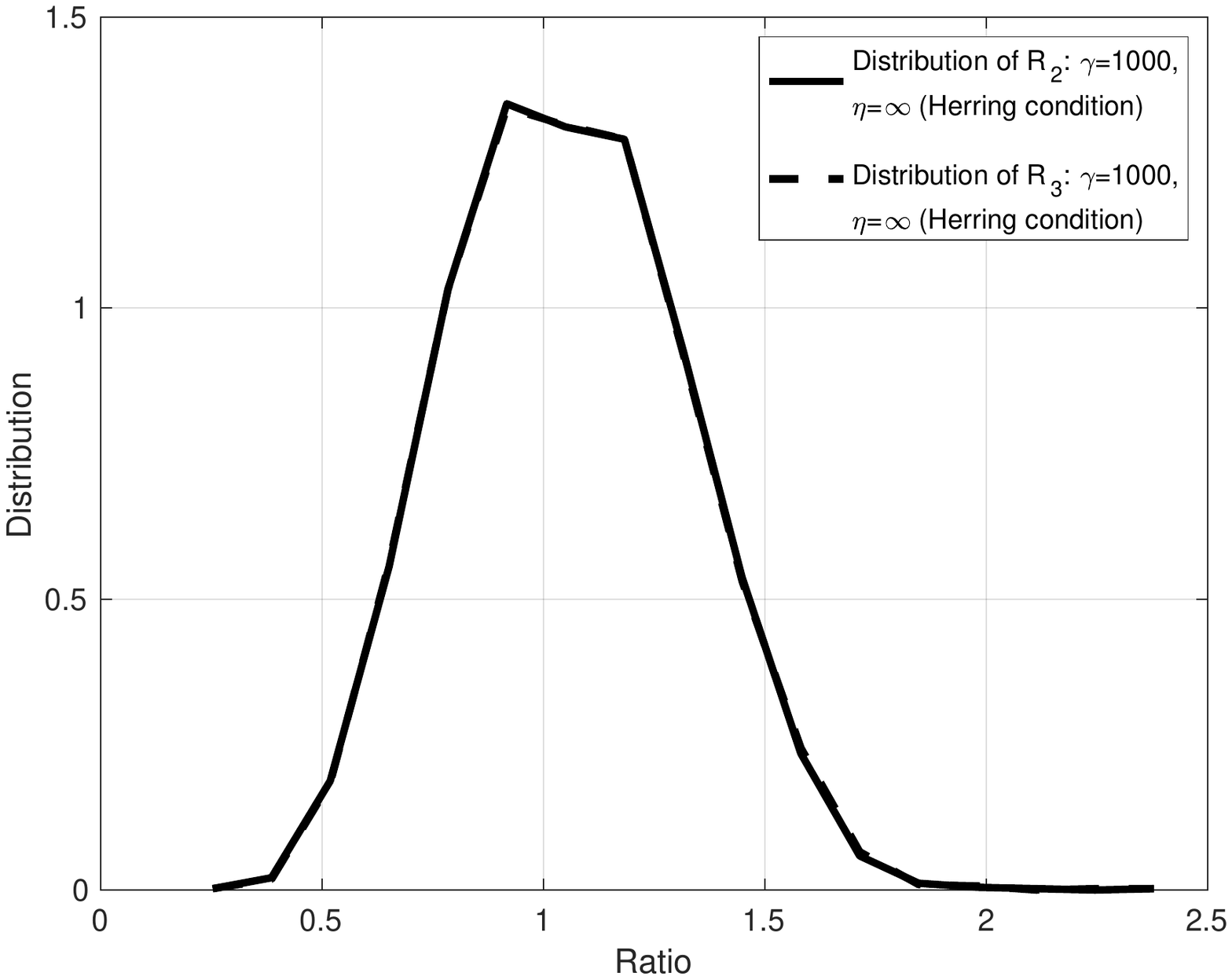}
\includegraphics[width=3.0in]{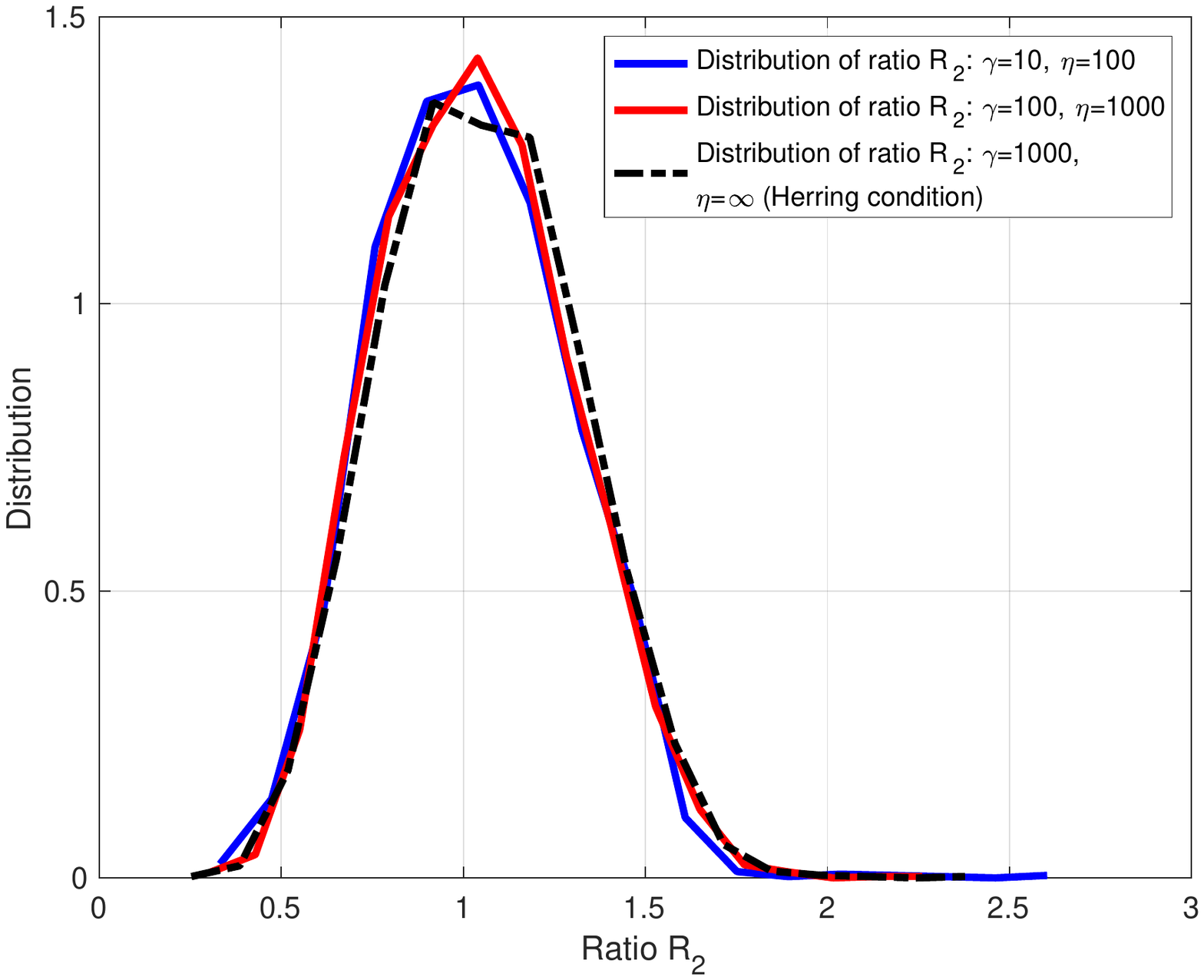}
\vspace{-1.8cm}
\caption{\footnotesize Grain growth system (\ref{eq:6.4n}) with finite
  $\mu$ (with curvature),  one run of $2$D trial with $10000$ initial
  grains: {\it (a) Left plot,} comparison of distributions of ratio
  $R_2$  (\ref{eqR2})  (solid black) and $R_3$  (\ref{eqR3}) (dashed black)
  for grain growth system with mobility of triple junctions $\eta \to
  \infty$ (Herring condition)
  and the misorientation parameter $\gamma=1000$.
 {\it (b) Right plot}, comparison of distributions of ratio
  $R_2$ (\ref{eqR2})  for grain growth systems with mobility of triple junctions $\eta=100$
  and the misorientation parameter $\gamma=10$ (solid blue),  with mobility of triple junctions $\eta=1000$
  and the misorientation parameter $\gamma=100$ (solid red), and with
  mobility of triple junctions $\eta\to \infty$ (Herring condition)
  and the misorientation parameter $\gamma=1000$ (dashed point
  black). The closest mesh node
  of the grain boundary to
  the triple junction $\vec{a}$ is used as
  $x_i$. The distributions are plotted at $T_{\infty}$.}\label{fig3}
\end{figure}

\begin{figure}[hbtp]
\centering
\vspace{-1.8cm}
\includegraphics[width=2.1in]{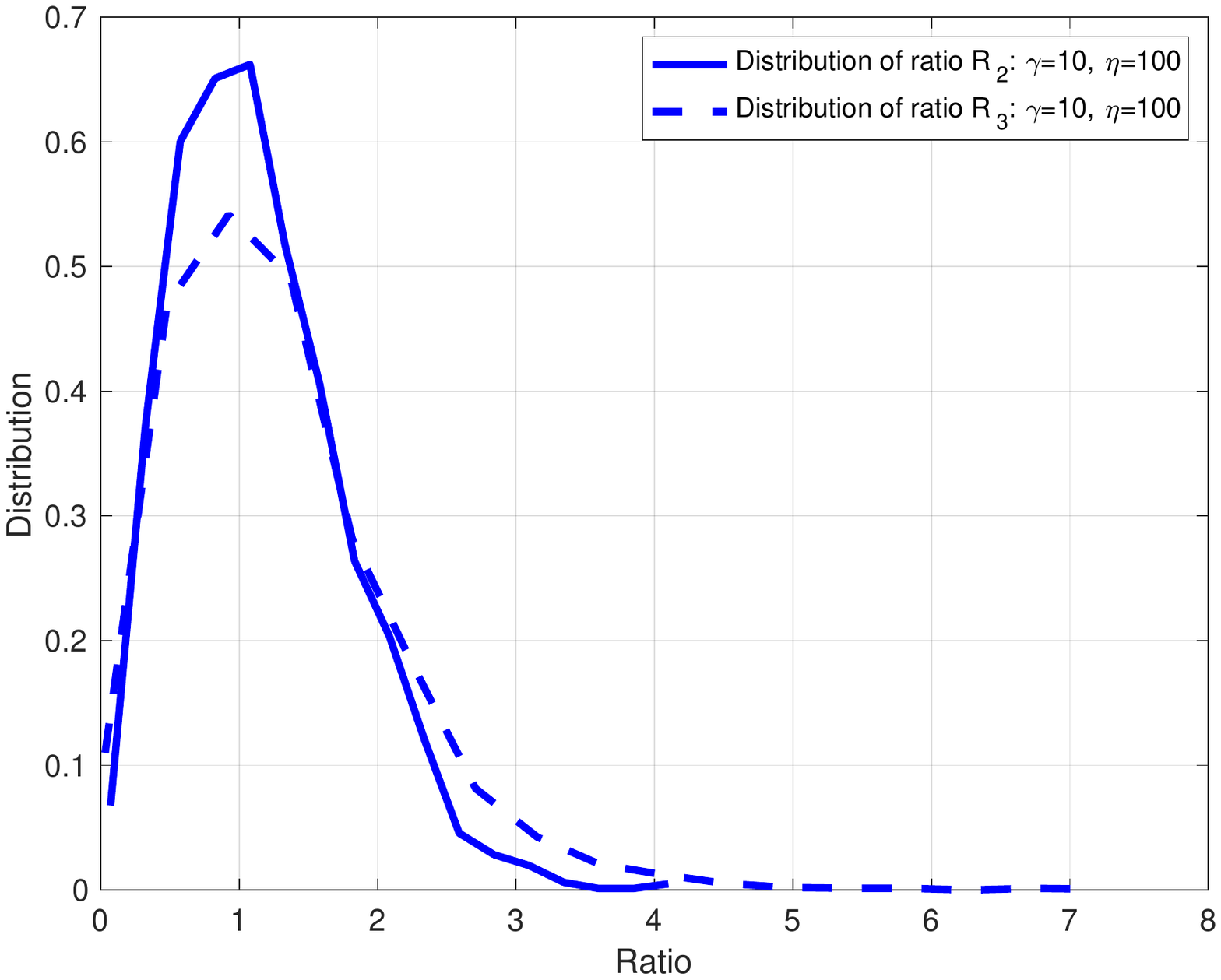}\hspace{-0.7cm}
\includegraphics[width=2.1in]{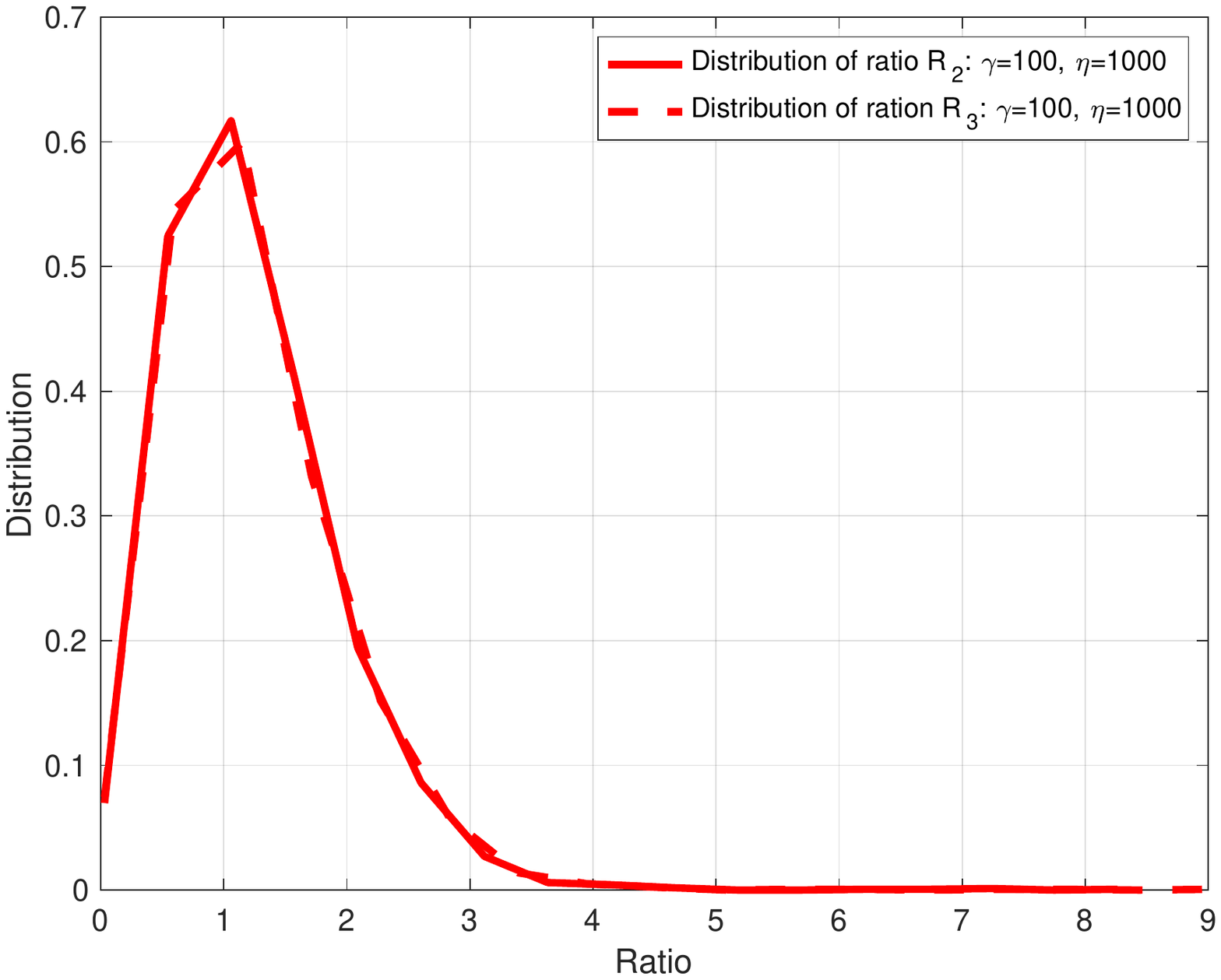}\hspace{-0.7cm}
\includegraphics[width=2.1in]{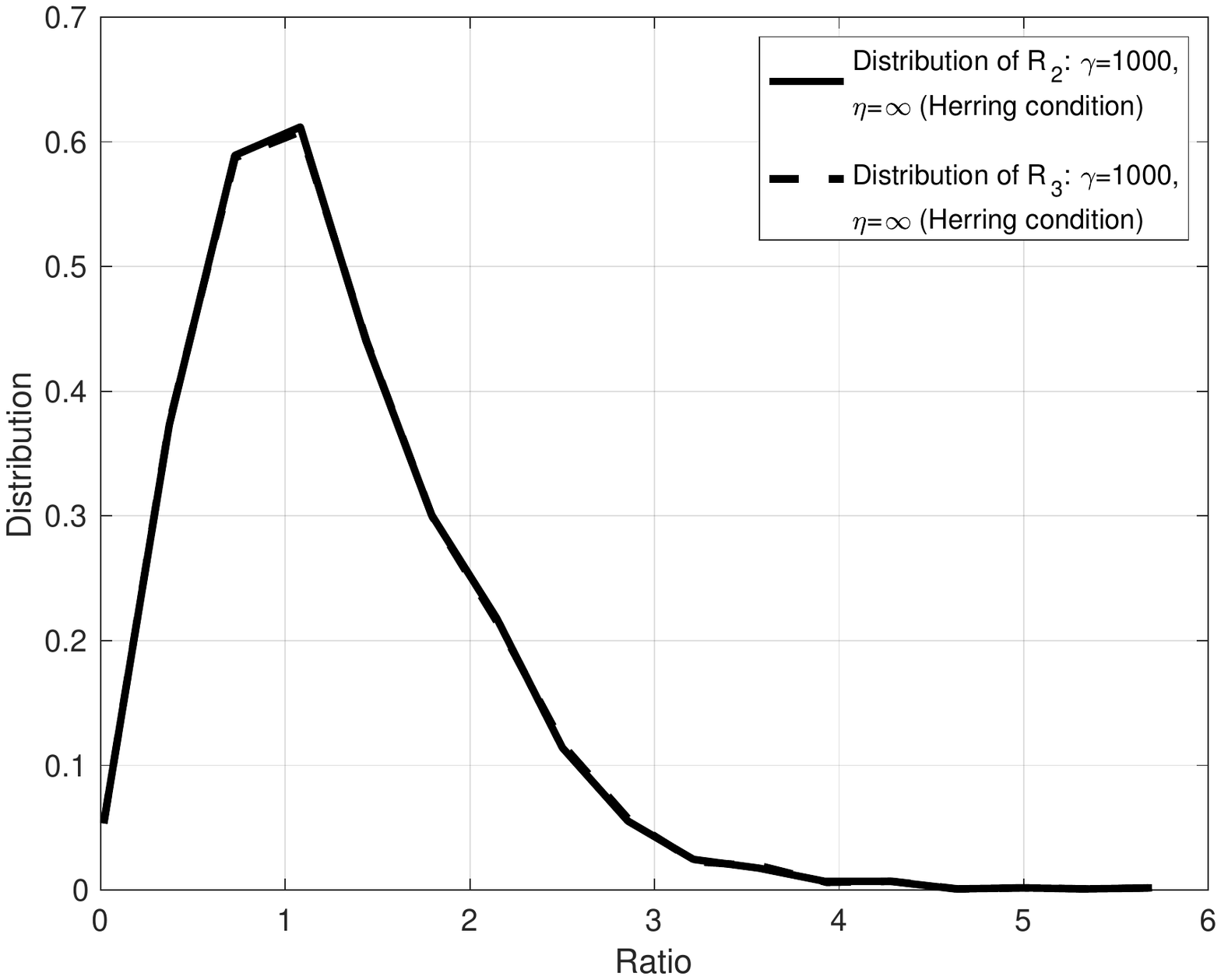}
\vspace{-1.8cm}
\caption{\footnotesize Grain growth system (\ref{eq:6.4n}) with finite
  $\mu$ (with curvature),  one run of $2$D trial with $10000$ initial
  grains:  {\it (a) Left plot,} comparison of distributions of ratio
  $R_2$  (\ref{eqR2}) (solid blue) and $R_3$  (\ref{eqR3}) (dashed blue)
  for grain growth system with mobility of triple junctions $\eta=100$
  and the misorientation parameter $\gamma=10$.  {\it (b) Middle
    plot,} comparison of distributions of ratio
  $R_2$  (\ref{eqR2}) (solid red) and $R_3$  (\ref{eqR3}) (dashed red)
  for grain growth systems with mobility of triple junctions $\eta=1000$
  and the misorientation parameter $\gamma=100$.
 {\it (c) Right plot}, comparison of distributions of ratio
  $R_2$  (\ref{eqR2}) (solid black) and $R_3$  (\ref{eqR3}) (dashed black)
  for grain growth system with mobility of triple junctions  $\eta\to \infty$
  and the misorientation parameter $\gamma=1000$. The other triple
  junction (different from $\vec{a}$) of the given grain boundary is used as
  $x_i$ here. The distributions are plotted at $T_{\infty}$.}\label{fig4}
\end{figure}

\begin{figure}[hbtp]
\centering
\vspace{-1.8cm}
\includegraphics[width=3.2in]{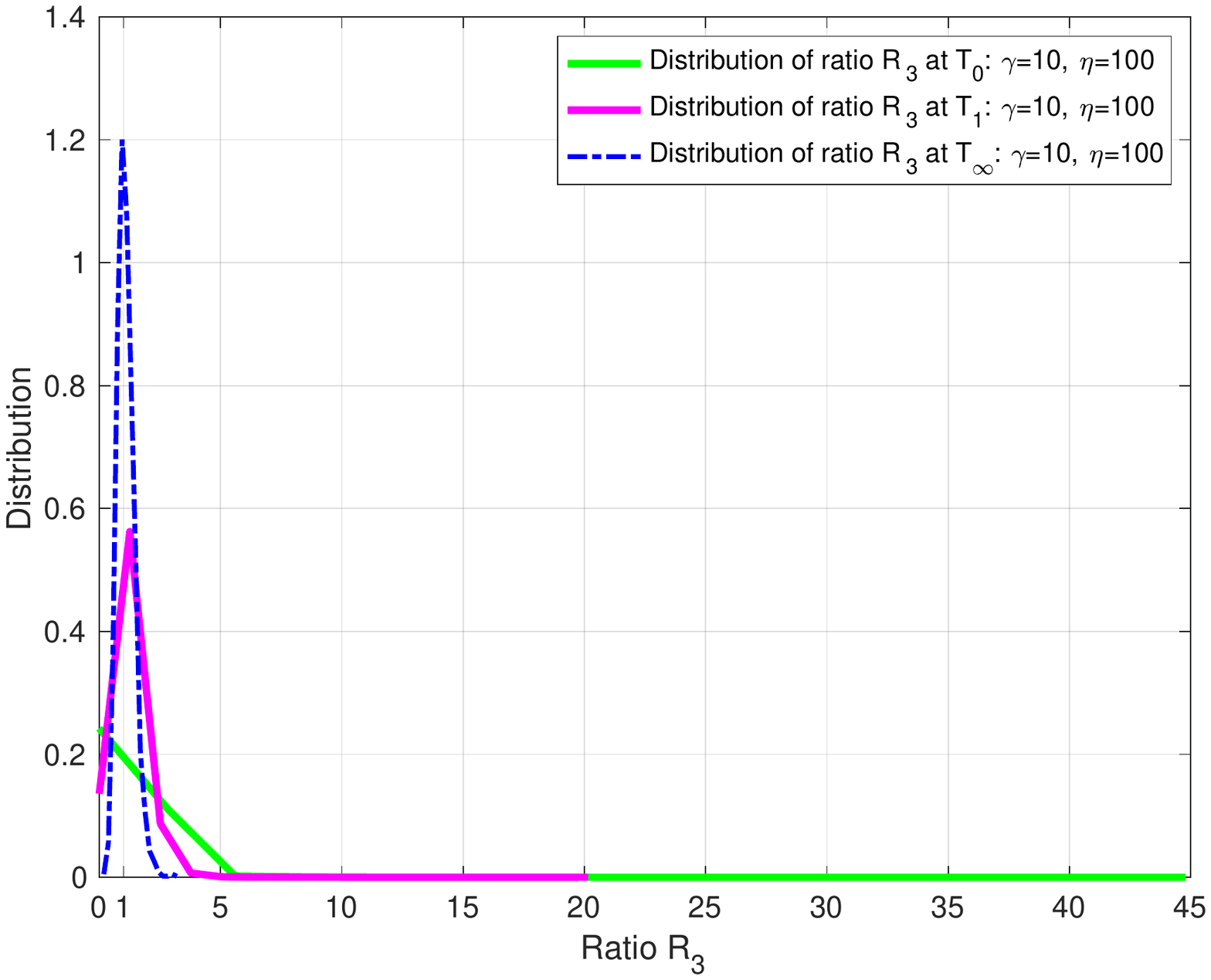}\hspace{-0.7cm}
\includegraphics[width=3.2in]{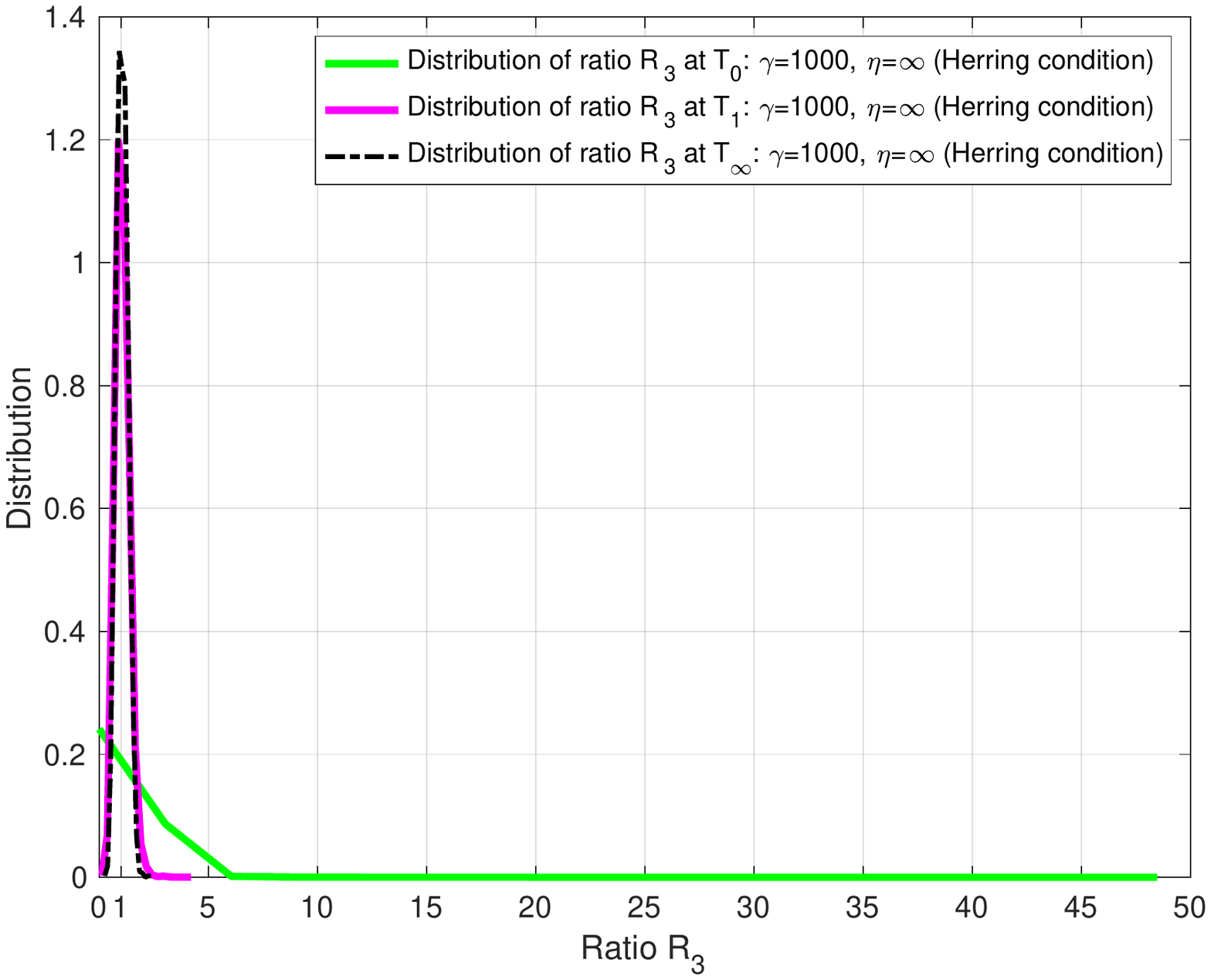}
\vspace{-1.8cm}
\caption{\footnotesize Grain growth system (\ref{eq:6.4n}) with finite
  $\mu$ (with curvature), one run of $2$D trial with $10000$ initial
  grains:  {\it (a) Left plot,} distributions of ratio
  $R_3$  (\ref{eqR3})  at initial time $T_0$ (solid green), at a time
  $T_1$ after a first time step (solid magenta) and at a final time
  $T_{\infty}$ (dashed point blue)
  for grain growth system with mobility of triple junctions $\eta =100$ 
  and the misorientation parameter $\gamma=10$.
 {\it (b) Right plot}, distributions of ratio
  $R_3$  (\ref{eqR3}) at initial time $T_0$  (solid green), at a time
  $T_1$ after a first time step (solid magenta) and at a final time
  $T_{\infty}$ (dashed point black)
  for grain growth system with mobility of triple junctions $\eta \to
  \infty$ (Herring condition)
  and the misorientation parameter $\gamma=1000$.  The closest mesh node
  of the grain boundary to
  the triple junction $\vec{a}$ is used as
  $x_i$.}\label{fig5}
\end{figure}

\begin{figure}[hbtp]
\centering
\vspace{-1.8cm}
\includegraphics[width=2.1in]{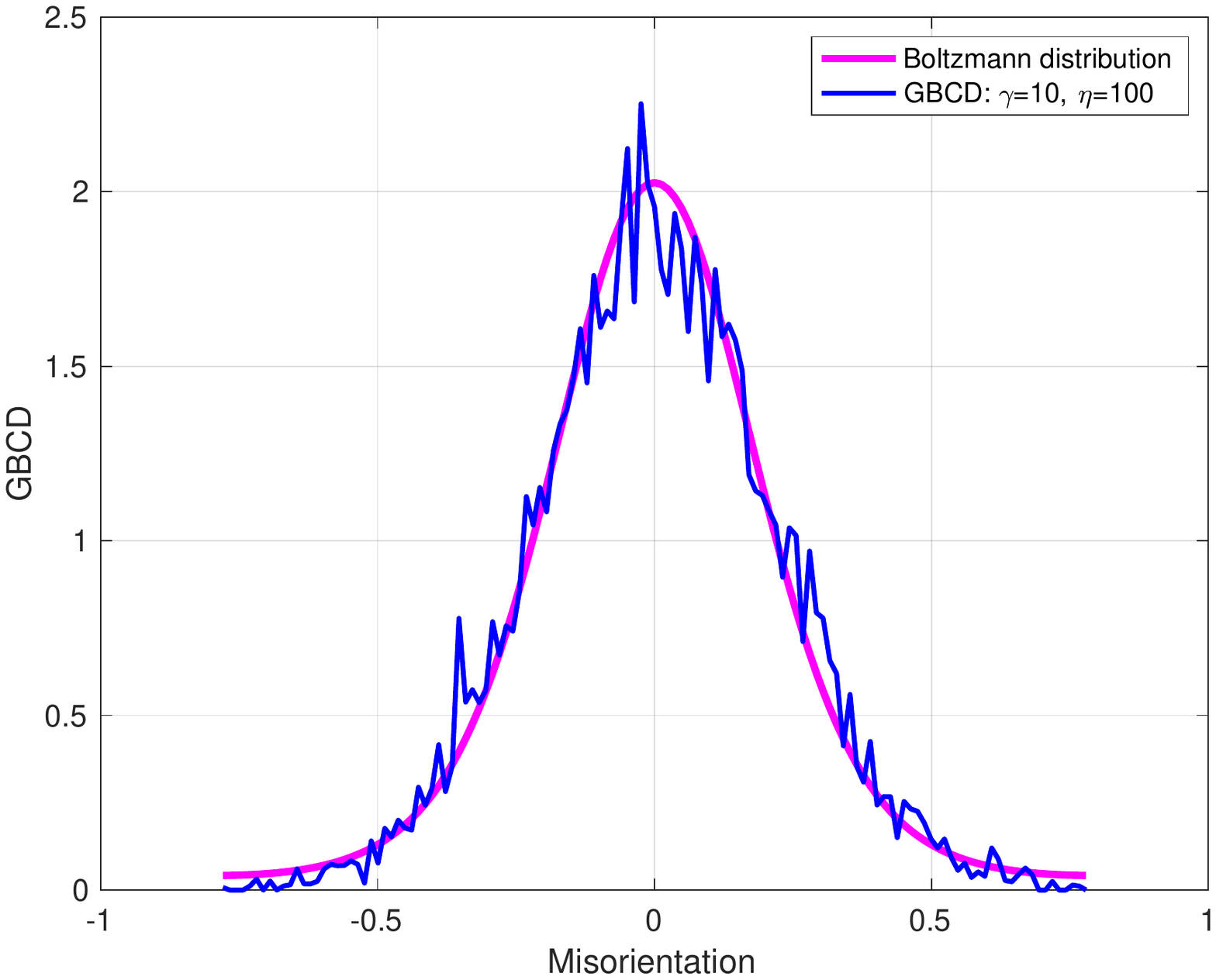}\hspace{-0.7cm}
\includegraphics[width=2.1in]{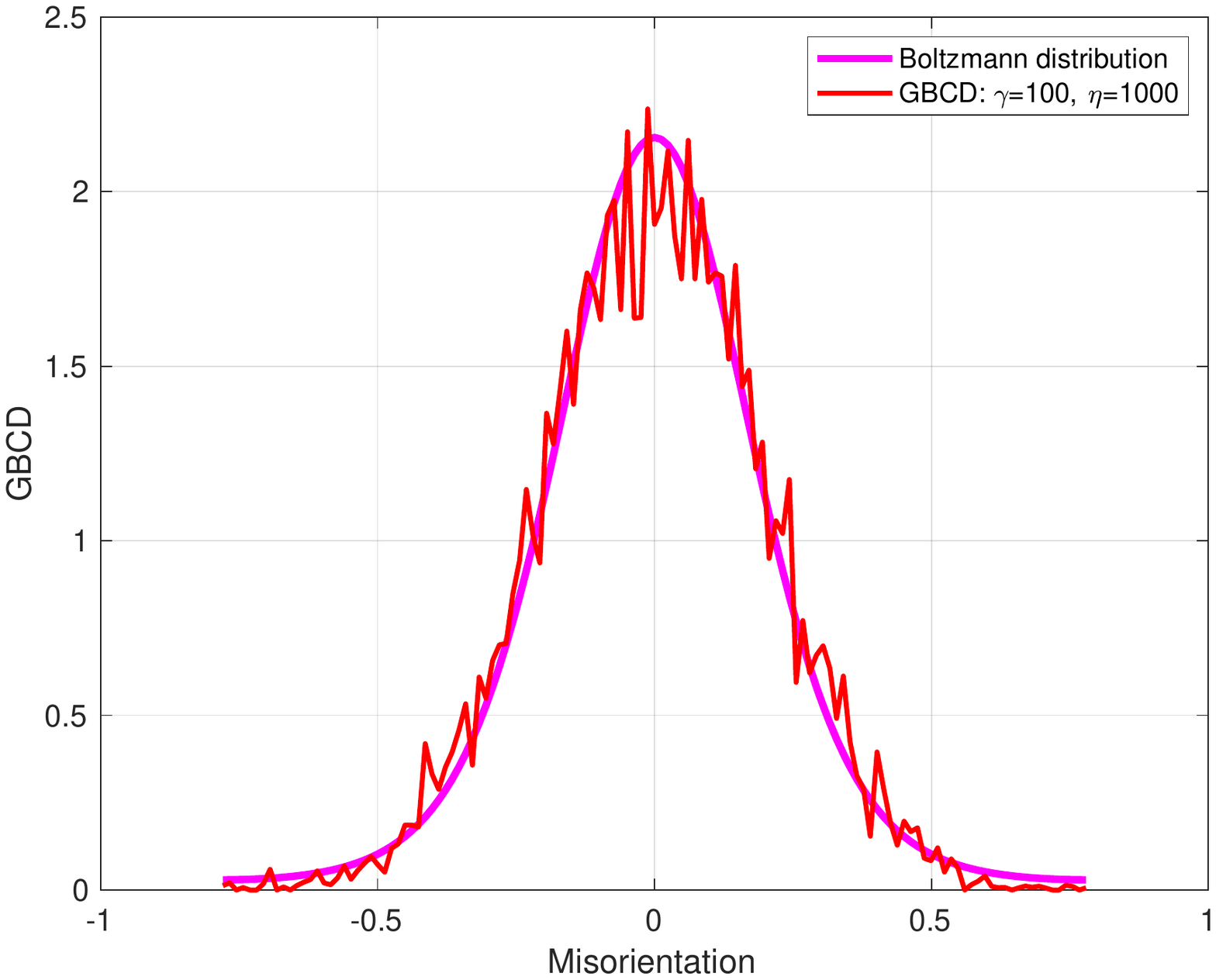}\hspace{-0.7cm}
\includegraphics[width=2.1in]{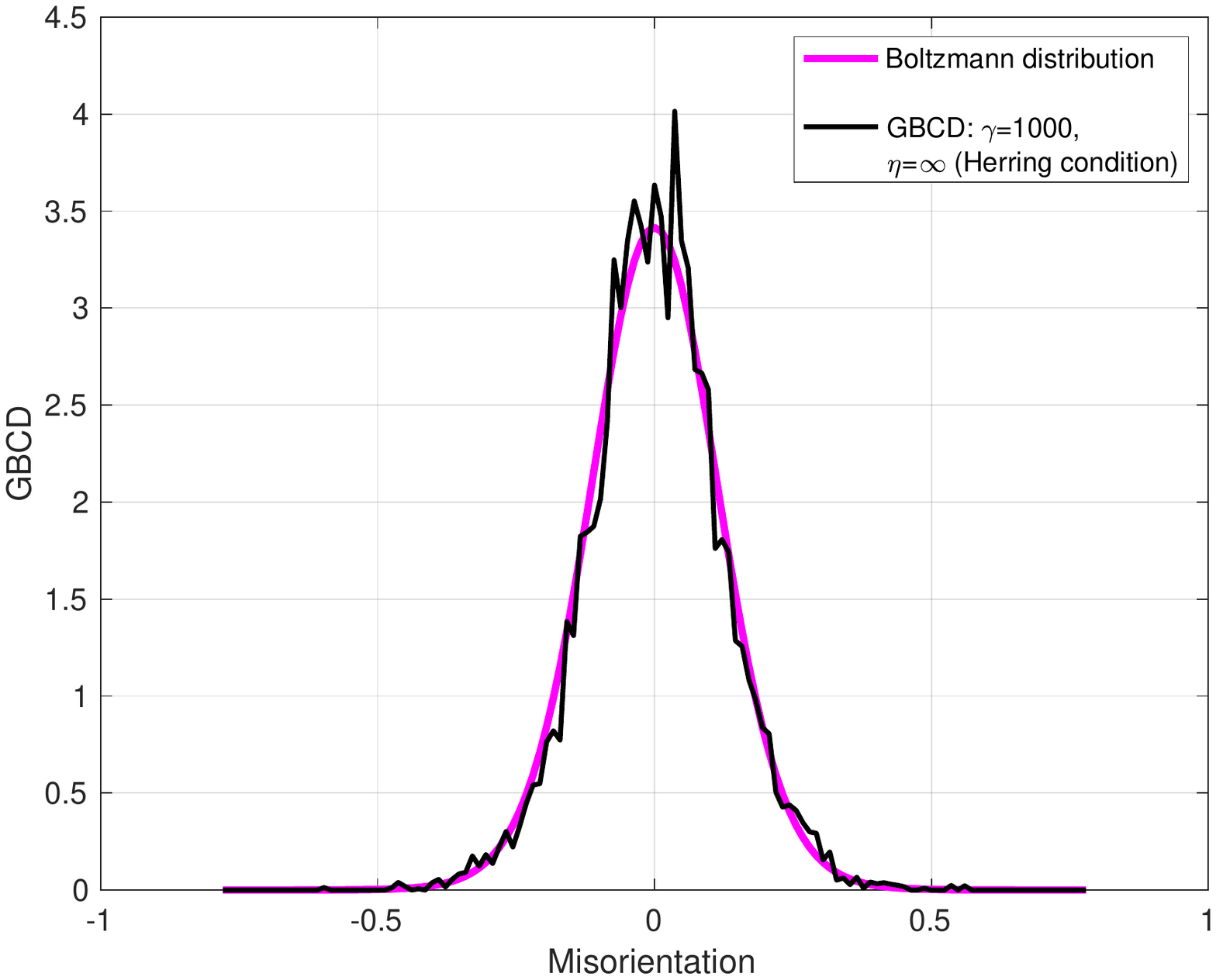}
\vspace{-1.8cm}
\caption{\footnotesize Grain growth system (\ref{eq:6.4n}) with finite
  $\mu$ (with curvature), one run of $2$D trial with $10000$ initial
  grains: {\it (a) Left plot,} GBCD (blue curve) at $T_{\infty}$ versus Boltzmann distribution with ``temperature''-
$D\approx 0.064$ (magenta curve), grain growth system with mobility of triple junctions $\eta =100$ 
  and the misorientation parameter $\gamma=10$. 
 {\it (b) Middle plot}, GBCD (red curve) at $T_{\infty}$ versus Boltzmann distribution with ``temperature''-
$D\approx 0.058$ (magenta curve), grain growth system with mobility of triple junctions $\eta =1000$ 
  and the misorientation parameter $\gamma=100$.  {\it (c) Right plot}, GBCD (black curve) at $T_{\infty}$ versus Boltzmann distribution with ``temperature''-
$D\approx 0.026$ (magenta curve), grain growth system with mobility of
triple junctions $\eta \to \infty$ (Herring condition) 
  and the misorientation parameter $\gamma=1000$.}\label{fig6}
\end{figure}

\begin{figure}[hbtp]
\centering
\vspace{-1.8cm}
\includegraphics[width=2.1in]{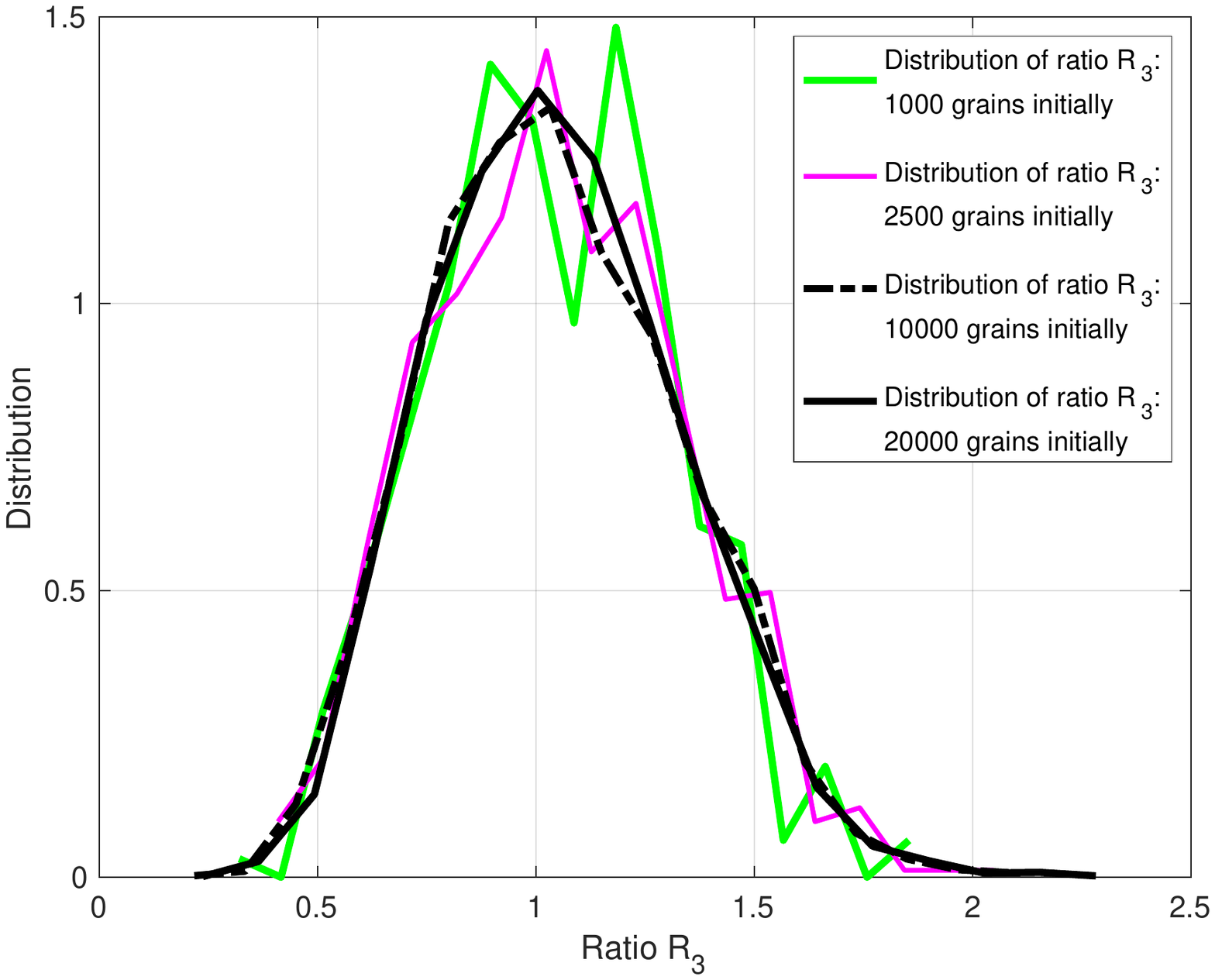}\hspace{-0.7cm}
\includegraphics[width=2.1in]{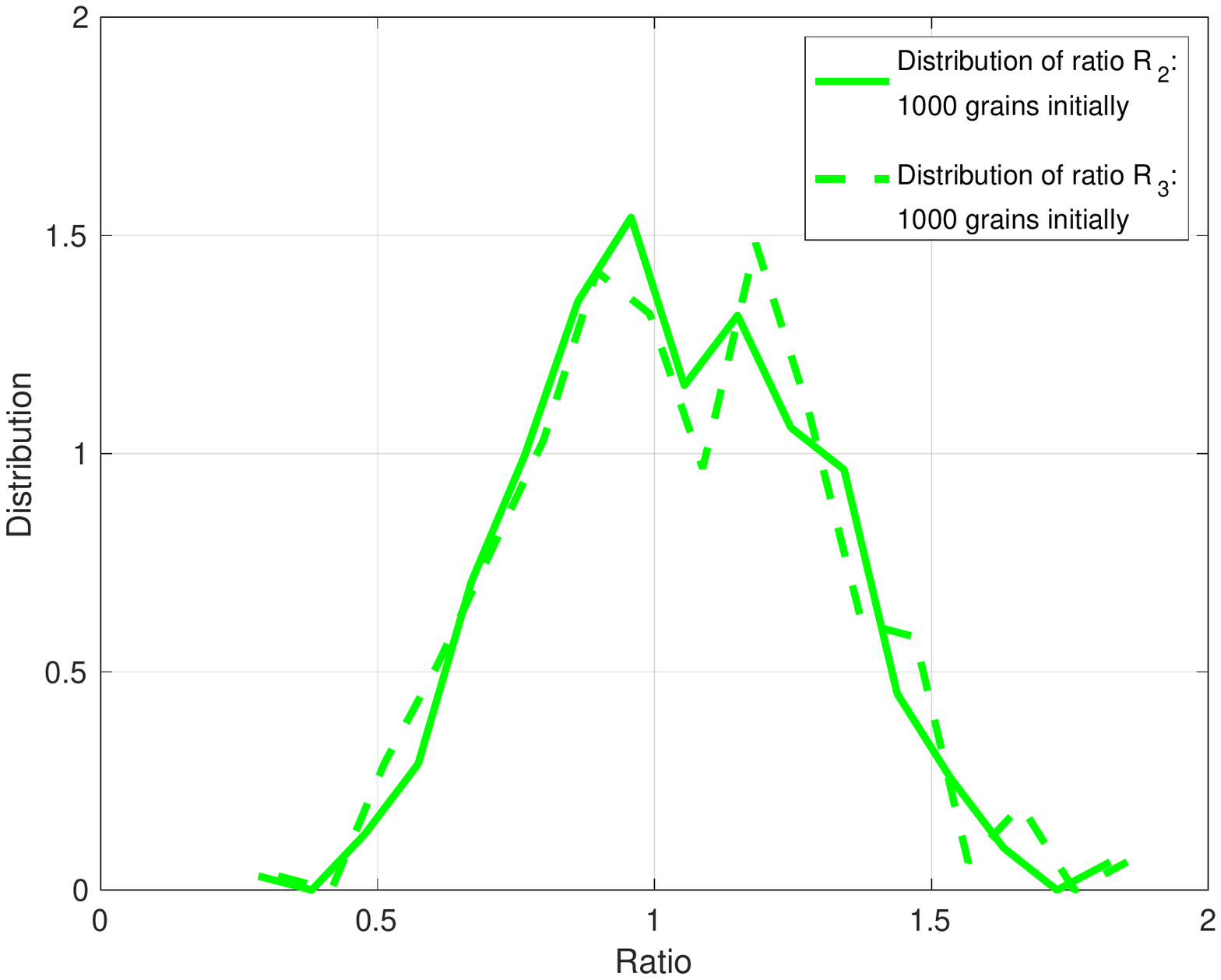}\hspace{-0.7cm}
\includegraphics[width=2.1in]{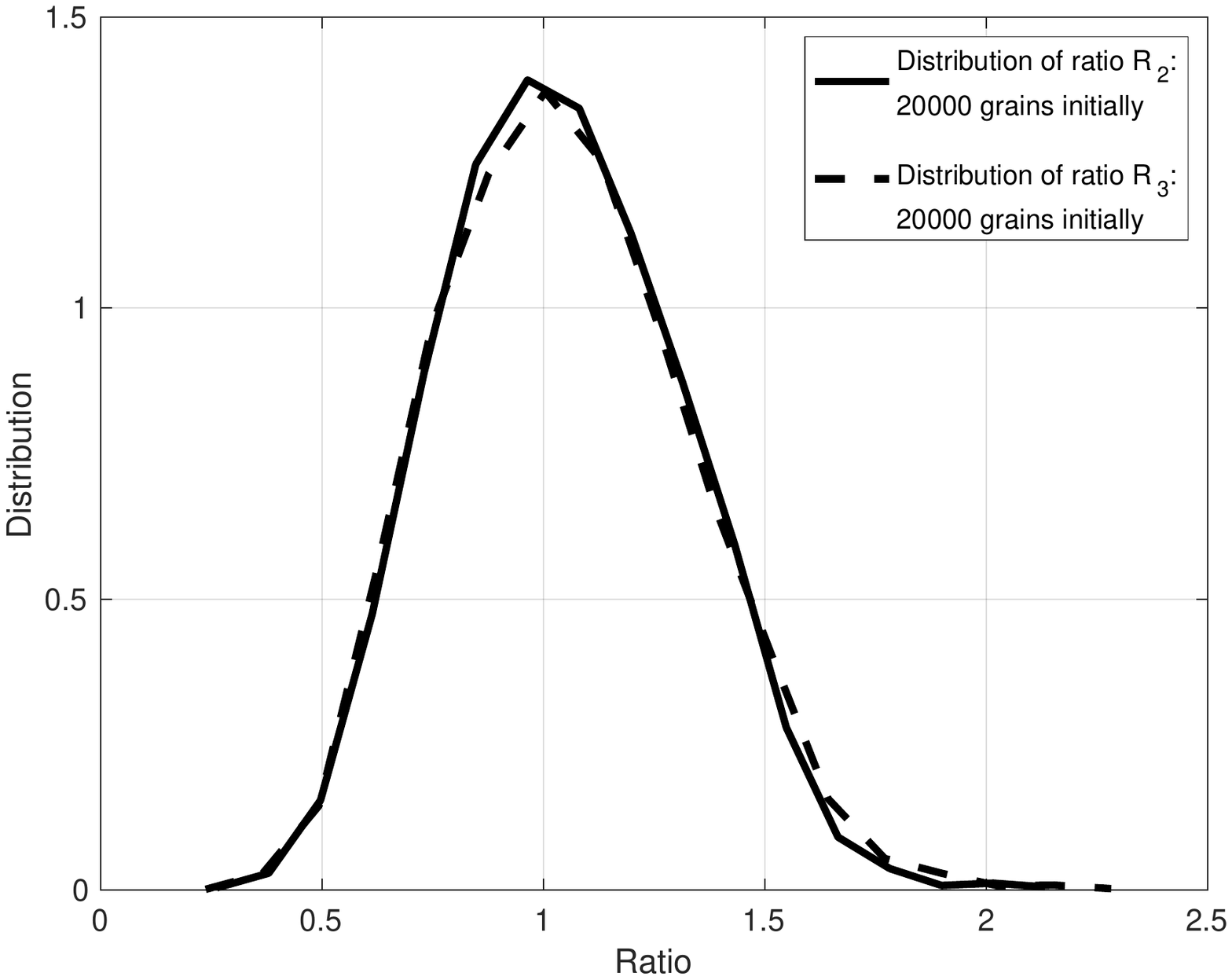}
\vspace{-1.8cm}
\caption{\footnotesize Grain growth system (\ref{eq:6.4n}) with finite
  $\mu$ (with curvature): {\it (a) Left plot,} distributions of ratio
  $R_3$  (\ref{eqR3}) system with 1000 grains initially  (solid green),
 system with 2500 grains initially  (solid magenta), system with 10000
 grains initially  (dashed point
  black),  and system with 20000 grains initially (solid black).
 {\it (b) Middle plot}, comparison of distributions of ratio
  $R_2$ (\ref{eqR2}) (solid green) and $R_3$  (\ref{eqR3})  (dashed green)
  for system with 1000 grains initially. 
{\it (c) Right plot}, comparison of distributions of ratio
  $R_2$ (\ref{eqR2}) (solid black) and $R_3$  (\ref{eqR3})  (dashed black)
  system with 20000 grains initially.
Grain growth systems are considered with mobility of triple junctions $\eta \to
  \infty$ (Herring condition)
  and no dynamic misorientation ($\gamma=0$). The closest mesh node
  of the grain boundary to
  the triple junction $\vec{a}$ is used as
  $x_i$. The distributions are plotted at $T_{\infty}$.}\label{fig7}
\end{figure}

\begin{figure}[hbtp]
\centering
\vspace{-1.8cm}
\includegraphics[width=2.1in]{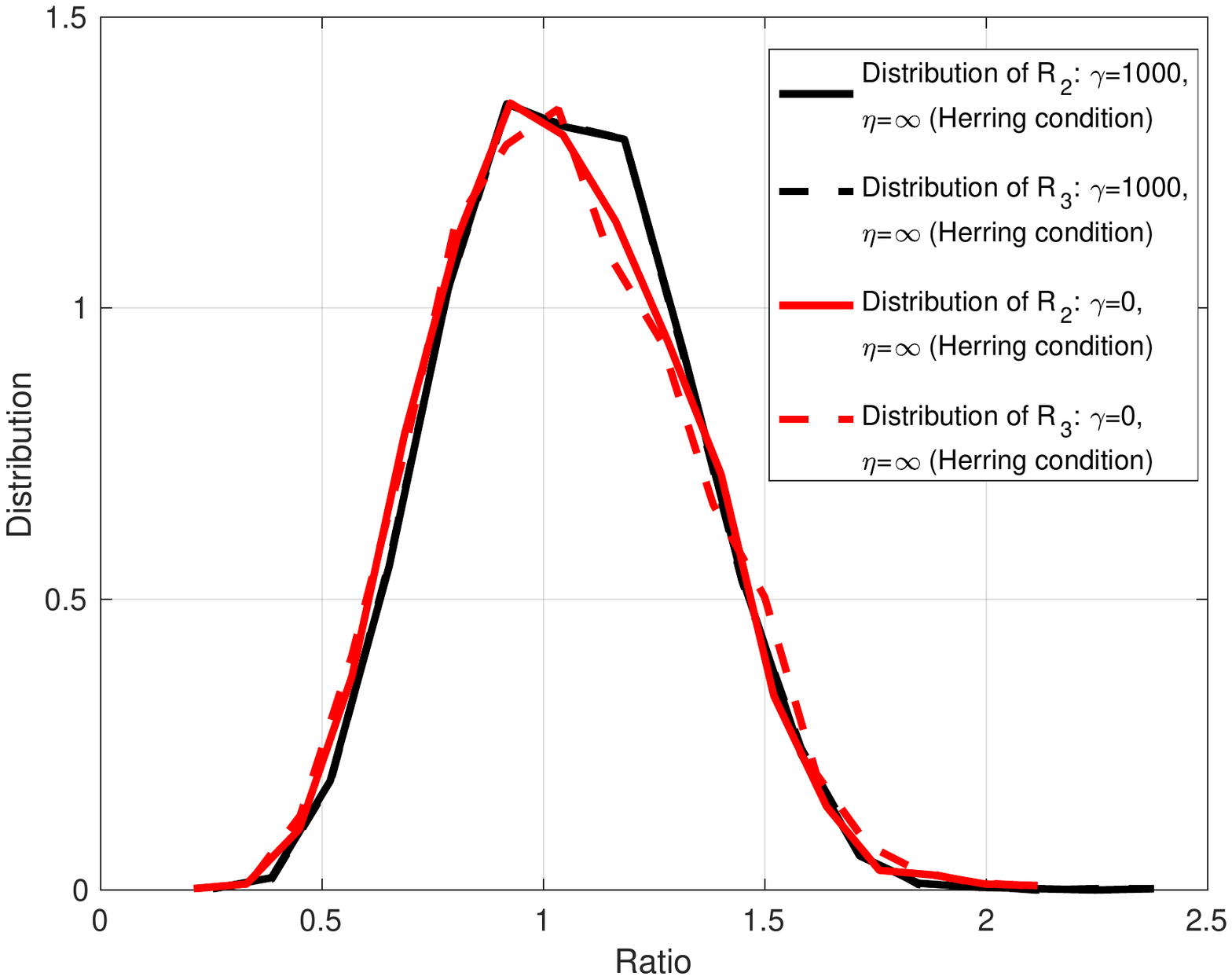}\hspace{-0.7cm}
\includegraphics[width=2.1in]{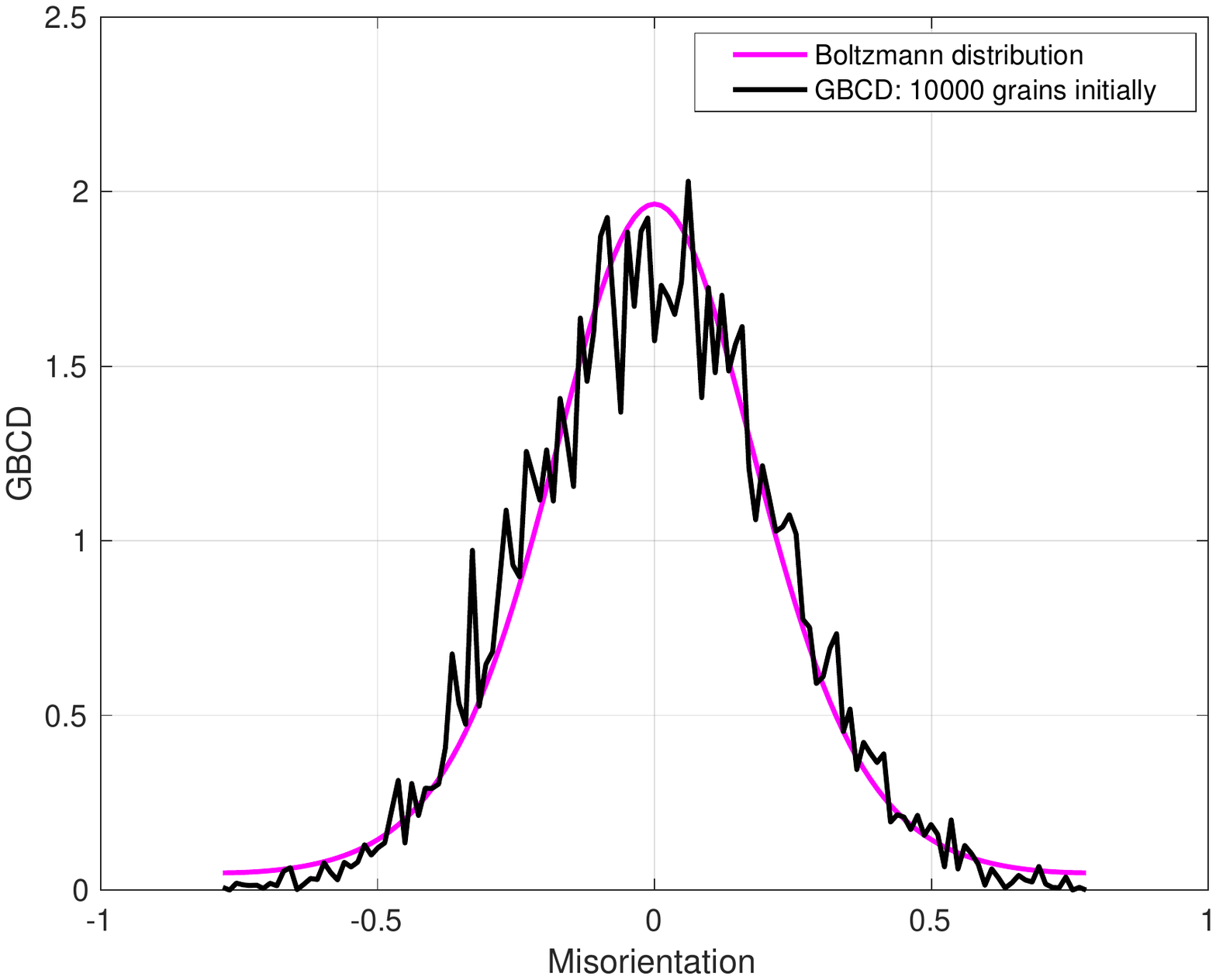}\hspace{-0.7cm}
\includegraphics[width=2.1in]{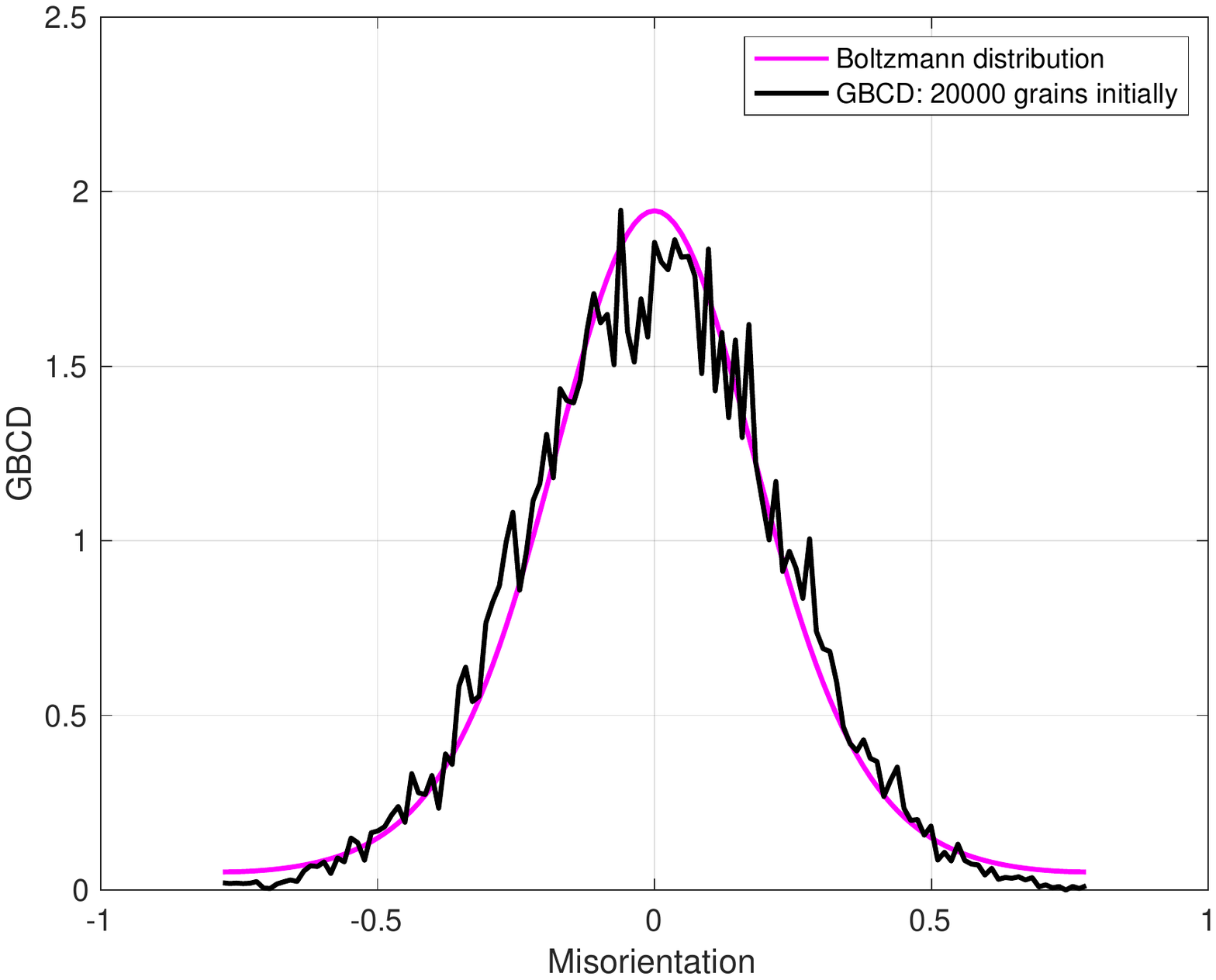}
\vspace{-1.8cm}
\caption{\footnotesize  Grain growth system (\ref{eq:6.4n}) with finite
  $\mu$ (with curvature): {\it (a) Left plot,}  one run of $2$D trial with $10000$ initial
  grains,  comparison of distributions of ratio
  $R_2$ (\ref{eqR2}) (solid black) and $R_3$  (\ref{eqR3})  (dashed
  black), grain growth system with mobility of triple junctions $\eta \to
  \infty$ (Herring condition)
  and $\gamma=1000$. Comparison of distributions of ratio
  $R_2$ (\ref{eqR2}) (solid red) and $R_3$  (\ref{eqR3})  (dashed
  red), grain growth system with mobility of triple junctions $\eta \to
  \infty$ (Herring condition)
  and no dynamic misorientations ($\gamma=0$). The closest mesh node
  of the grain boundary to
  the triple junction $\vec{a}$ is used as
  $x_i$.
The distributions are plotted at $T_{\infty}$ {\it (b) Middle plot},  one run of $2$D trial with $10000$ initial
  grains, GBCD (black curve) at $T_{\infty}$ versus Boltzmann distribution with ``temperature''-
$D\approx 0.068$ (magenta curve). 
 {\it (c) Right plot}, one run of $2$D trial with $20000$ initial
  grains,  GBCD (black curve) at $T_{\infty}$ versus Boltzmann distribution with ``temperature''-
$D\approx 0.069$ (magenta curve). Grain growth system with mobility of triple junctions $\eta \to
  \infty$ (Herring condition)
  and no dynamic misorientation ($\gamma=0$).}\label{fig8}
\end{figure}

\begin{figure}[hbtp]
\centering
\vspace{-1.8cm}
\includegraphics[width=3.0in]{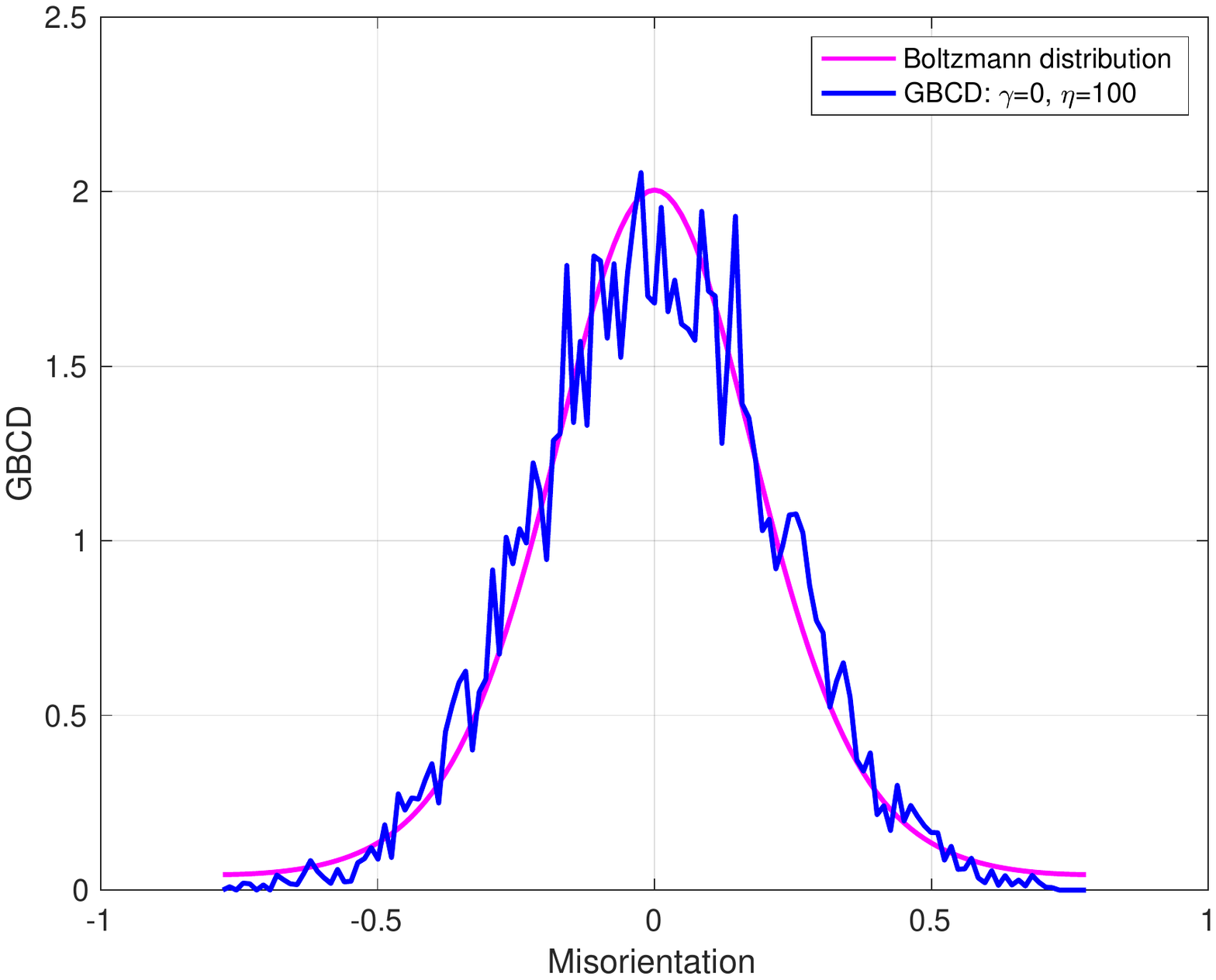}\hspace{-0.7cm}
\includegraphics[width=3.0in]{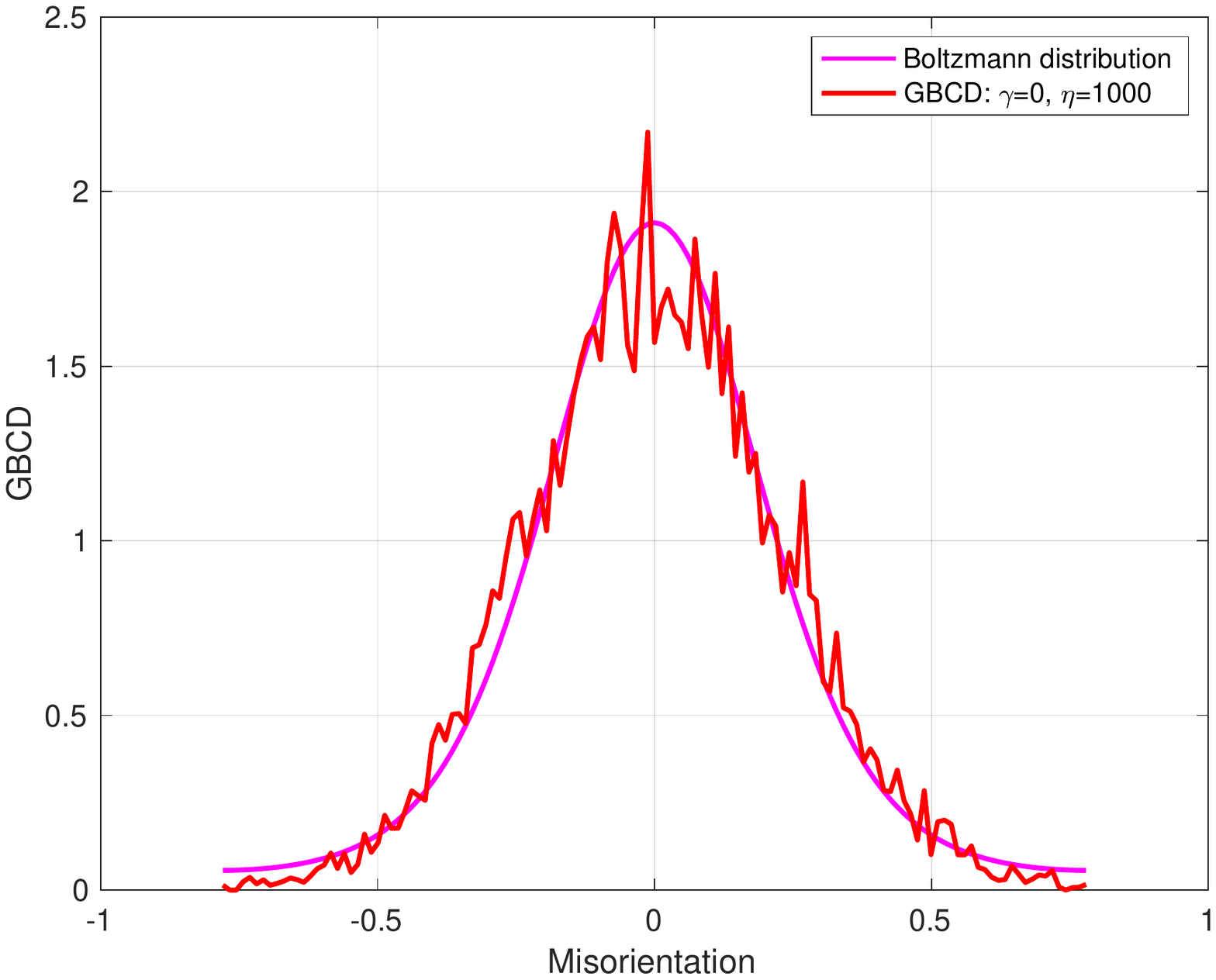}
\vspace{-1.8cm}
\caption{\footnotesize  Grain growth system (\ref{eq:6.4n}) with finite
  $\mu$ (with curvature), one run of $2$D trial with $10000$ initial
  grains:  {\it (a) Left plot},  GBCD (blue curve) at $T_{\infty}$ versus Boltzmann distribution with ``temperature''-
$D\approx 0.066$ (magenta curve), grain growth system with mobility of triple
junctions $\eta=100$ and no dynamic misorientation ($\gamma=0$). 
 {\it (b) Right plot},  GBCD (red curve) at $T_{\infty}$ versus Boltzmann distribution with ``temperature''-
$D\approx 0.071$ (magenta curve), grain growth system with mobility of triple
junctions $\eta=1000$ and no dynamic misorientation ($\gamma=0$).} \label{fig9}
\end{figure}

\begin{figure}[hbtp]
\centering
\vspace{-1.8cm}
\includegraphics[width=2.1in]{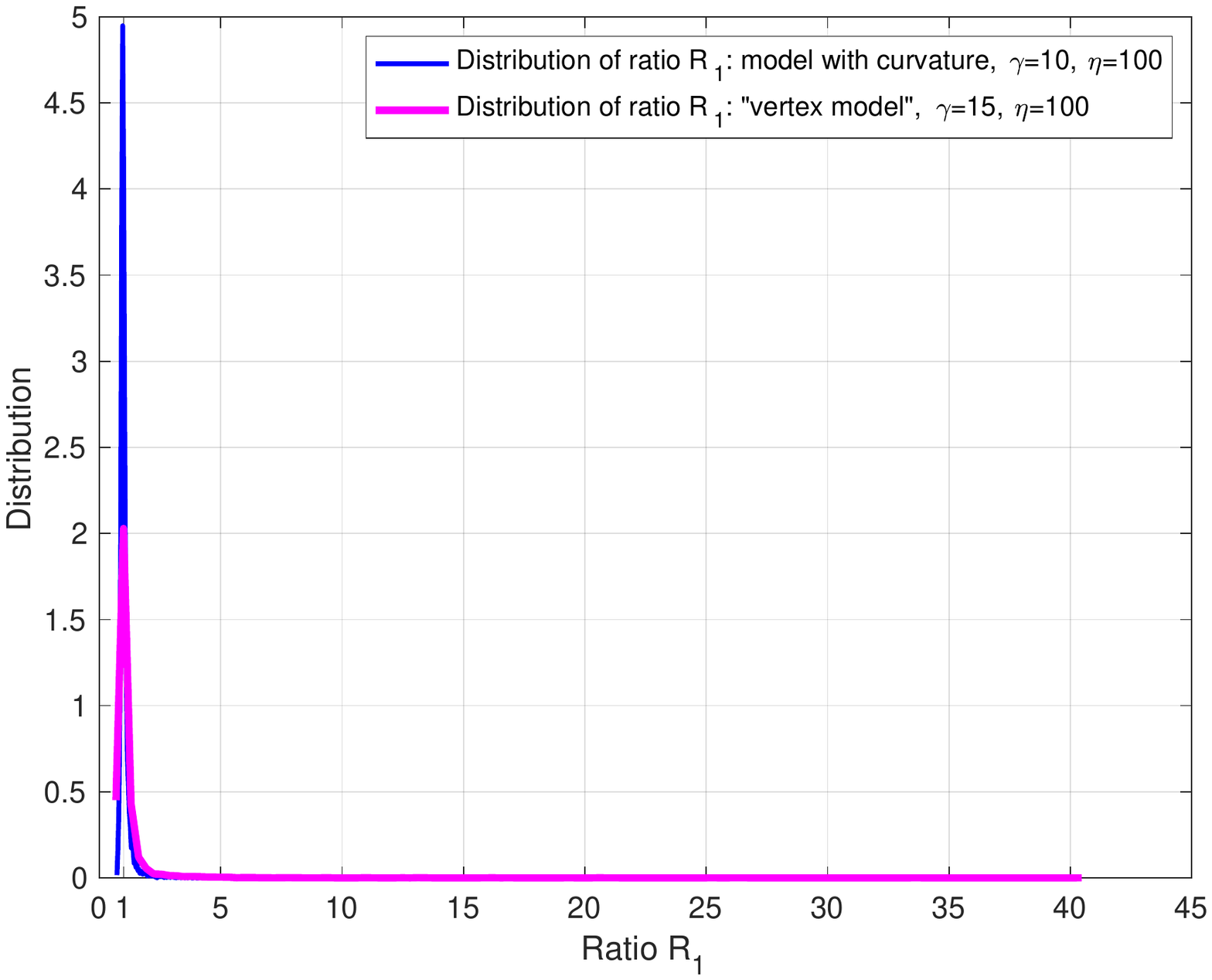}\hspace{-0.7cm}
\includegraphics[width=2.1in]{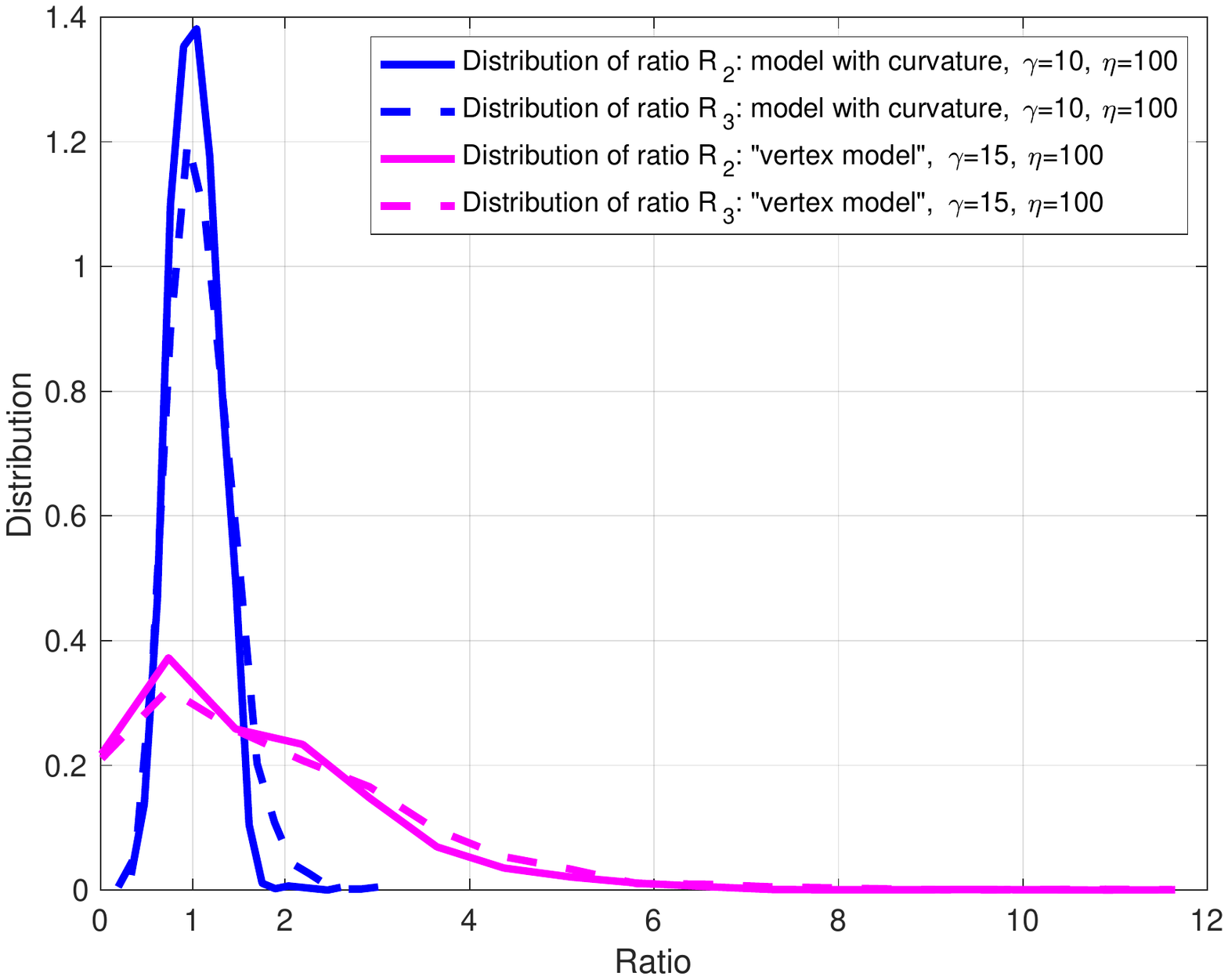}\hspace{-0.7cm}
\includegraphics[width=2.1in]{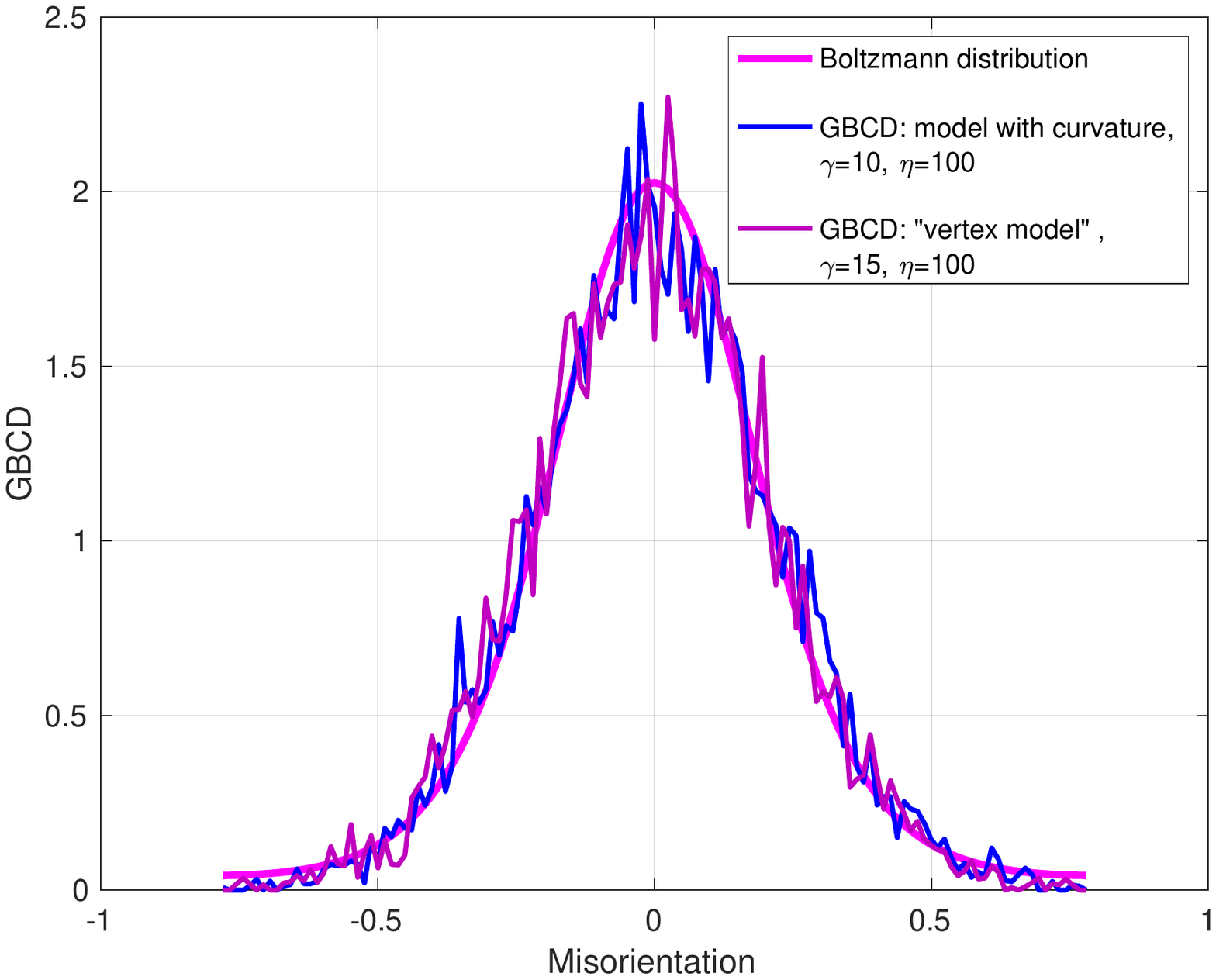}
\vspace{-1.8cm}
\caption{\footnotesize One run of $2$D trial with $10000$ initial
  grains: {\it (a) Left plot}: Comparison of distributions of ratio
  $R_1$ (\ref{eqR1}) (solid blue) for model with curvature (finite
  $\mu$) (\ref{eq:6.4n}) and $R_1$
  (\ref{eqR1})  (solid magenta) ``vertex model'' with ($\mu\to \infty$) (\ref{eq:6.4n}). {\it (b) Middle plot}: comparison of distributions of ratio
  $R_2$ (\ref{eqR2}) (solid blue) and $R_3$  (\ref{eqR3})  (dashed blue) for
  model with curvature (finite
  $\mu$) (\ref{eq:6.4n}), and comparison of distributions of ratio
  $R_2$ (\ref{eqR2}) (solid magenta) and $R_3$  (\ref{eqR3})  (dashed magenta) for
  ``vertex model'' with ($\mu\to \infty$) (\ref{eq:6.4n}). The closest mesh node
  of the grain boundary to
  the triple junction $\vec{a}$ is used as
  $x_i$. The distributions are plotted at $T_{\infty}$. {\it (c) Right plot}: One run of $2$D trial with $10000$ initial
  grains, GBCD (blue curve) ``curvature model''  (finite
  $\mu$) (\ref{eq:6.4n}),  GBCD (dark magenta
  curve) ``vertex model'' ($\mu\to \infty$) (\ref{eq:6.4n}) at $T_{\infty}$ versus Boltzmann distribution with ``temperature''-
$D\approx 0.064$ (magenta curve). Grain growth ``curvature model'' is considered with
mobility of triple junctions $\eta =100$ and $\gamma=10$,  and grain
growth ``vertex model'' is considered with
mobility of triple junctions $\eta =100$ and $\gamma=15$.}\label{fig10}
\end{figure}

\section{Conclusion}\label{sec14}
 \par In this paper, we study a stochastic model for the evolution of planar grain boundary
network in order to be able to incorporate and model the effect of the critical
events during grain growth (coarsening).  We start with a simplified model and, hence,  consider the Langevin
equation analog of the model from \cite{MR4263432},  with the interactions among
triple junctions and misorientations modeled as white noise.
The proposed system considers
anisotropic grain boundary energy which depends on lattice misorientation and
takes into account mobility of the triple junctions, as well as
independent dynamics of the misorientations. We derive the associated
Fokker-Planck equation and establish fluctuation-dissipation
principle. Next,  due to degeneracy and singularity of the system
energy, we use weighted $L^2$
space to establish long time asymptotics of the solution to the
Fokker-Planck equation, 
the joint probability density function of misorientations and triple
junctions,  as well as  of the closely related marginal probability density of misorientations
(the results are obtained under fluctuation-dissipation assumption).  As a part of our future work, we will study the
logarithmic-Sobolev inequality  \cite{MR1842428,MR0889476,MR3497125,MR1760620} and construct the $L^1$ theory of the
system.
 \par Furthermore, for an equilibrium configuration of a boundary network, we derive explicit
local algebraic relations, a generalized Herring Condition formula, as well as
formula that connects grain boundary energy density with the geometry
of the grain boundaries that share a triple junction.  Even though the
considered simplified stochastic model neglects
the explicit interactions and correlations among triple junctions, the considered
specific form of the noise, under the fluctuation-dissipation assumption,
provides partial information about evolution of a grain boundary
network, and is consistent with presented results of extensive grain
growth simulations.  As a part of our future research, we also plan to
identify and model explicitly correlations, including nucleation \cite{MR2442975,ZHANG20106574} and interactions
among triple junctions, as well as extend theory to different
statistical metrics of grain growth.
\section*{Acknowledgments}\label{sec:Ack}

The authors are grateful to David Kinderlehrer for the fruitful
discussions, inspiration and motivation of the work. The authors are
also grateful to colleagues Katayun Barmak and Lajos Horvath for
their collaboration and helpful discussions. Yekaterina Epshteyn acknowledges
partial support of NSF DMS-1905463, Masashi Mizuno
acknowledges partial support of JSPS KAKENHI Grant No.18K13446, Chun Liu acknowledges partial support of
NSF DMS-1759535 and NSF DMS-1759536.

\bibliographystyle{plain}
\bibliography{references}

\end{document}